\theoremstyle{plain}
\newtheorem{theorem}{Theorem}
\newtheorem{corollary}[theorem]{Corollary}
\newtheorem{definition}[theorem]{Definition}
\newtheorem{lemma}[theorem]{Lemma}
\newtheorem{proposition}[theorem]{Proposition}
\newtheorem{assumption}[theorem]{Assumption}
\newcommand{\dd}{\, \mathrm{d}}
\newcommand{\R}{\mathbb{R}}
\DeclareSymbolFont{fouriersymbols}{FMS}{futm}{m}{n}
\DeclareSymbolFont{fourierlargesymbols}{FMX}{futm}{m}{n}
\DeclareMathDelimiter{\VERT}{\mathord}{fouriersymbols}{152}{fourierlargesymbols}{147}
\title{Optimal control of a nonlinear kinetic Fokker-Planck equation}
\author[1]{Tobias Breiten}
\author[2]{Karl Kunisch}
\affil[1]{Institute of Mathematics, Technische Universit\"at Berlin, Stra\ss e des 17. Juni 136, 10623 Berlin, Germany,   \texttt{tobias.breiten@tu-berlin.de}}
\affil[2]{Institute of Mathematics, University of Graz, Austria and Radon Institute, Austrian Academy of Sciences, Linz, Austria,  \texttt{karl.kunisch@uni-graz.at}} 
\begin{document}

\maketitle

\begin{abstract}
   A tracking type optimal control problem for a nonlinear and nonlocal kinetic Fokker-Planck equation which arises as the mean field limit of an interacting particle systems that is subject to distance dependent random fluctuations is studied. As the equation of interest is only hypocoercive and the control operator is unbounded with respect to the canonical state space, classical variational solution techniques cannot be utilized directly. Instead, the concept of admissible control operators is employed. For the underlying nonlinearities, local Lipschitz estimates are derived and subsequently used within a fixed point argument to obtain local existence of solutions. Again, due to hypocoercivity, existence of optimal controls requires non standard techniques as (compensated) compactness arguments are not readily available.
\end{abstract}
 
{\bf Keywords:} hypocoercivity, nonlinear kinetic Fokker-Planck, optimal control \\

{\bf MSC codes:} 35Q83, 35Q84, 47N70, 49J20

\section{Introduction}\label{sec:intro}
 
We consider the following nonlinear nonlocal controlled Fokker-Planck equation
\begin{equation}\label{eq:orig_nonl_kfp}
 \begin{aligned}
  \partial_tf + v  \cdot \nabla_xf + U * \rho_{v  f}\cdot \nabla _v  f &= U* \rho_f \nabla_v  \cdot (\nabla_v  f + v  f)+ u( \alpha \cdot \nabla_v f )\\
  f(0,x,v )&=f_0(x,v ),
 \end{aligned}
 \end{equation}
where $\alpha \in L^2(\mathbb R^d)\cap L^\infty(\mathbb R^d)$ only depends on $x\in \mathbb R^d$, $u\in L^\infty(0,T)$ is a scalar time-dependent control and
\begin{align*}
 \rho_f(t,x)=\int_{\mathbb R^d} f(t,x,v )\,\mathrm{d}v , \ \ \rho_{v  f}(t,x) = \int_{\mathbb R^d} v  f(t,x,v )\,\mathrm{d}v,
\end{align*}
where $\rho_f(t,x)\in\mathbb R$ and $\rho_{vf}(t,x)\in \mathbb R^d$. Throughout the manuscript, we shall also consider $\rho_f$ and $\rho_{vf}$ as time-independent functions, e.g., when $f$ is fixed for a specific time $t$. For $u\equiv 0$, equation \eqref{eq:orig_nonl_kfp} has been introduced in \cite{DuaFT10} and shown to arise as a mean-field limit of the (stochastically perturbed) particle system
\begin{equation}\label{eq:part_sys}
\begin{aligned}
  \dd x_i &= \dd v_i \dd t \\
  \dd v_i &= \sum_{i=1}^m U(\|x_j-x_i\|)(v_j-v_i) \dd t + \sqrt{2 \mu \sum_{j=1}^m U(\|x_j-x_i\|)} \dd W_i 
 \end{aligned}
\end{equation}
 where $(x_i,v_i) \in \mathbb R^d\times \mathbb R^d, i=1,\dots,m$ model the evolution of the position and velocity of a collection of particles, e.g., birds that communicate with each other via the distance potential or communication rate $U$. The model \eqref{eq:part_sys} is a generalization of the classical Cucker-Smale model from \cite{CucS07} and has been analyzed in \cite{HaT08} for a non-integrable distance potential $U$. In \cite{DuaFT10}, the noiseless model from \cite{HaT08} has been augmented by random fluctuations of the particles which depend on the strength of their underlying density. Here, we follow \cite{DuaFT10} and assume the potential $U=U(x)$ to be continuous and to satisfy
\begin{align*}
 U(x) = U(\|x\|) \ge 0, \ \ \int_{\mathbb R^d} U(x)\,\mathrm{d}x=1.
\end{align*}
It is well-known and can easily be verified that a steady state solution to the uncontrolled equation \eqref{eq:orig_nonl_kfp} is given by the \textit{Maxwellian} function
\begin{align}\label{eq:maxwellian}
  \mu=\mu(v) = (2\pi )^{-\tfrac{d}{2}}e^{-\frac{\|v\|^2}{2}}.
\end{align}
In fact, for initial values $f_0$ close to $\mu$, the unique solution to \eqref{eq:orig_nonl_kfp} with $u\equiv 0$ asymptotically converges algebraically to $\mu$ as $t\to \infty$, see \cite[Theorem 1.1]{DuaFT10}. 

Since its introduction in \cite{CucS07}, many works dealing with the analysis, control and numerical simulation of particle systems, and  with the PDEs describing their associated densities have appeared in the literature \cite{BelDT17}. 
In view of equation \eqref{eq:orig_nonl_kfp}, let us for example mention the early work \cite{CarS95} where the authors prove a global existence result for a nonlinear nonlocal Vlasov-Poisson-Fokker-Planck system which, similar to \eqref{eq:orig_nonl_kfp}, exhibits both parabolic (in the variable $v$) and hypberbolic (in the variable $x$) behavior, a property often found in so-called hypocoercive equations \cite{Vil09}. We also refer to \cite{Cho16,ChoHY23} where other nonlinear Fokker-Planck type equations have been analyzed w.r.t.~global existence and uniqueness of solutions as well as \cite{KarMT13} which, in contrast to \eqref{eq:orig_nonl_kfp}, however, deals with nonlinearities not entering the highest order differential operator. For the noiseless version of \eqref{eq:orig_nonl_kfp}, an optimal control problem has been studied in \cite{PicRT15}, and \cite{Albetal17} considers deterministic and non-deterministic control problems for mean-field PDEs depending only on the position variable $x$.

In addition to its nonlinear and nonlocal nature, the analysis of \eqref{eq:orig_nonl_kfp} is challenging due to the above mentioned hypocoercivity which does not allow for standard coercivity arguments when studying the long-time behavior of solutions. For a general overview on hypocoercive equations, let us refer to, e.g., the monographs \cite{HelN05,Vil09} as well as the survey article \cite{LelS16}. For linear (uncontrolled) Fokker-Planck equations, a detailed treatise of hypocoercivity results can be found for example in \cite{ConG08,GroS16} and, more recently, also in \cite{Albetal24}. In   \cite{AAS15} the long time behavior of linear hypocoercive Fokker-Planck equations is analyzed with a modified entropy method. Recently in a series of papers of which we  cite \cite{AAMN24} short and long term decay rates were analyzed using the hypocoercivity index. 
In \cite{BreK23}, we have considered an (infinite-horizon) control problem for a linear hypocoercive Fokker-Planck equation which bears resemblance to a linearization of \eqref{eq:orig_nonl_kfp} around the Maxwellian $\mu$, cf.~the subsequent perturbation discussion below. 

With regard to the above literature, our contribution is twofold. On the one hand, the introduction of a control interaction in \eqref{eq:part_sys} and thus also \eqref{eq:orig_nonl_kfp}, e.g., by an \emph{opinion leader} requires the analysis of a nonlinear nonlocal nonhomogeneous hypocoercive PDE that is not availabe elsewhere. Using the control theoretic concept of \emph{admissibility} in combination with a fixed point strategy (cf.~also \cite{HosJS18} for an abstract bilinearly controlled Cauchy problem), for small (close to the Maxwellian $\mu$) initial data $f_0$ and small control $u$, in Theorem \ref{thm:weak_non_con} we will obtain the local existence of a mild solution to \eqref{eq:orig_nonl_kfp}. In contrast to the existence result (for the uncontrolled solution) from \cite{DuaFT10}, we do not require smooth initial data but only assume $f_0$ square integrable on $\mathbb R^{2d}$ with respect to the canonical invariant measure characterized by $\mu$. On the other hand we cannot assert uniqueness for the resulting notion of weak solutions.  Our second main result concerns the existence of a locally optimal solution to a quadratic tracking type cost functional, see Theorem \ref{thm:ex_opt_cont}, and the uniqueness of the associated optimal state, cf.~Proposition \ref{prop:unique}.  

While we concentrate here on a concrete nonlinear kinetic Fokker-Planck equation, some of the concepts that we employ  could be useful for the study of other optimal control problems for different nonlinear kinetic equations as well. These include the combination of semigroup and variational techniques, the  use of admissible control operators, and the treatment of optimal control problems where the control to state mapping is not necessarily unique.

The structure of the manuscript is as follows.  For the analysis of \eqref{eq:orig_nonl_kfp} it will be convenient to consider variables evolving locally around the Maxwellian $\mu$. Therefore we next derive an equation, equivalent to \eqref{eq:orig_nonl_kfp}, for the perturbation variable $y$. This section ends with an introduction of the  notation used throughout the remainder of the manuscript. Section \ref{sec:lin_eq} analyzes a suitable linearization by means of semigroup as well as variational techniques similarly as utilized in \cite{BreK23}. The nonlinear equation is studied in section \ref{sec:nonlin_eq}. Here, Lipschitz estimates for the nonlinearities appearing in \eqref{eq:orig_nonl_kfp} are derived  and  the local existence of solutions by a fixed point argument is established. In section \ref{sec:opt_con_prob}, we discuss a quadratic tracking type cost functional for which we discuss existence of an optimal control as well as uniqueness of its associated state. The manuscript ends with a conclusion and an outlook of potential future research questions in the context of \eqref{eq:orig_nonl_kfp}.

\paragraph{A perturbed version of \eqref{eq:orig_nonl_kfp}.}

Subsequently, we will derive an equation equivalent to \eqref{eq:orig_nonl_kfp} by considering the dynamics in relation to the steady state $\mu(v)= (2\pi )^{-\tfrac{d}{2}}e^{-\frac{\|v\|^2}{2}}$. For this purpose as well as for several calculations throughout the manuscript, let us mention the following useful property of $\mu$:
\begin{align}\label{eq:mu_aux}
  \nabla_v \mu + v \mu = (2\pi )^{-\tfrac{d}{2}}e^{-\frac{\|v\|^2}{2}}\cdot(\tfrac{-2 v}{2})+v\cdot (2\pi )^{-\tfrac{d}{2}}e^{-\frac{\|v\|^2}{2}} = 0.
\end{align}
Similar to \cite{DuaFT10} but with a different form of the perturbation, let us consider
\begin{align}\label{eq:pertb}
 f=\mu + \mu y
\end{align}
with the goal of deriving an equation for $y$ from \eqref{eq:orig_nonl_kfp}. In the following, we address all terms in \eqref{eq:orig_nonl_kfp} individually. We obviously have that
\begin{align}\label{eq:derive_y_aux1}
  \partial_t f = \partial _t(\mu + \mu y ) =\mu \partial_t y.
\end{align}
The second term on the left hand side then is given by
\begin{align}\label{eq:derive_y_aux2}
 v\cdot \nabla_x f = v \cdot \nabla_x(\mu + \mu y) = \mu v \cdot \nabla_x y.
\end{align}
Before we turn to the convolution operators in \eqref{eq:orig_nonl_kfp}, observe that
\begin{equation}\label{eq:convolution_aux}
\begin{aligned}
 \rho_{vf}&=\int_{\mathbb R^d} v f\,\mathrm{d}v = \int_{\mathbb R^d} v(\mu + \mu y)\,\mathrm{d}v = \underbrace{ \int_{\mathbb R^d} v\mu \,\mathrm{d}v }_{=0}+ \int_{\mathbb R^d} \mu vy \,\mathrm{d}v =  \rho_{\mu vy} \\
 \rho_f&= \int_{\mathbb R^d} f \,\mathrm{d}v =\int_{\mathbb R^d} \mu + \mu y \,\mathrm{d}v =\underbrace{\int_{\mathbb R^d} \mu \,\mathrm{d}v}_{=1}+\int_{\mathbb R^d} \mu y\,\mathrm{d}v = 1+\rho_{\mu y}
\end{aligned}
\end{equation}
where in the first line we used that $\mu$ is positive symmetric and $v$ is antisymmetric w.r.t.~the origin. We continue with the third term on the left hand side
\begin{equation}\label{eq:derive_y_aux3}
\begin{aligned}
 U*\rho_{vf} \cdot \nabla_v f &= U* \rho_{\mu v y} \cdot \nabla_v (\mu + \mu y) = U* \rho_{\mu v y} \cdot \left(\nabla_v \mu + y \nabla_v\mu+ \mu \nabla_v y \right) \\
 &= U*\rho_{\mu vy} \cdot (-\mu v-\mu vy+\mu \nabla_vy).
\end{aligned}
\end{equation}
Finally, we use \eqref{eq:mu_aux} to derive
\begin{align*}
  \nabla_v \cdot(\nabla_v f+  vf)&= \nabla_v \cdot( \nabla_v (\mu+\mu y) + v(\mu+\mu y))
  = \nabla_v \cdot(\nabla_v (\mu y) + \mu v y) \\
  &= \nabla_v \cdot(\mu\nabla_v y+ y\nabla_v \mu + \mu vy) = \nabla_v \cdot (\mu \nabla_v y) \\
  &= \mu \Delta_v y+\nabla_v \mu \cdot \nabla _ v y = \mu \Delta_v y -\mu v \cdot \nabla_v y.
\end{align*}
This, together with \eqref{eq:convolution_aux} now yields
\begin{equation}\label{eq:derive_y_aux4}
\begin{aligned}
 U*\rho_f \nabla_v \cdot(\nabla_v f+vf)&= U*(1+\rho_{\mu y}) \mu (\Delta_v y-v\cdot \nabla _v y)\\
 &= (1+U*\rho_{\mu y}) \mu (\Delta_v y-v\cdot \nabla _v y)
\end{aligned}
\end{equation}
where in the last step we used that $U*1=\int_{\mathbb R^d} U(x)\,\mathrm{d}x =1$. Combining \eqref{eq:derive_y_aux1},\eqref{eq:derive_y_aux2},\eqref{eq:derive_y_aux3},\eqref{eq:derive_y_aux4} and eliminating the common factor $\mu$, for $u\equiv 0$, we arrive at
\begin{equation}\label{eq:sharp_nonl_kfp}
\begin{aligned}
  &\partial_t y + v\cdot \nabla_x y + U*\rho_{\mu vy} \cdot (\nabla_v y-yv-v) \\
  &\qquad =\Delta_v y -v \cdot \nabla_v y + U*\rho_{\mu y} (\Delta_vy - v\cdot \nabla_v y).
 \end{aligned}
\end{equation}
In particular, the above system takes the form
\begin{align}\label{eq:RL_nonl_kfp}
 \partial _t y= Ay+Dy- h_1(y) - h_2(y)
\end{align}
where $A,D,h_1$ and $h_2$ are given as
\begin{equation}\label{eq:RL_op}
\begin{aligned}
  Ay &=\Delta_v y - v\cdot \nabla_v y - v \cdot \nabla_xy,  \ \
  Dy=  U*\rho_{\mu yv} \cdot v, \\[1ex]
  h_1(y) &= U*\rho_{\mu y} R_0y, \ \ h_2(y)=U*\rho_{\mu yv} \cdot (\nabla_v y-yv), \\[1ex]
  R_0y &= -\Delta_v y + v\cdot \nabla_v y.
\end{aligned}
\end{equation}
In particular, note that $R_0$ is formally self-adjoint and non negative w.r.t.~the $L^2_\mu$ inner product, see \cite[Remark 2.2]{BreK23}. For later reference, we point out that the formal $L^2_\mu$ adjoint of $A$ is given by
\begin{align}\label{eq:adj_A}
  A^*y = \Delta_v y - v\cdot \nabla_v y + v \cdot \nabla_xy.
\end{align}

Now we turn to the control operator in \eqref{eq:orig_nonl_kfp}. For fixed $u$, the transformation corresponding to \eqref{eq:pertb} is given by
\begin{align*}
 u (\alpha \cdot \nabla_v (\mu +\mu y)) &= u(\alpha\cdot \nabla_v\mu +y  \alpha \cdot \nabla_v \mu+ \mu \alpha \cdot \nabla_v y ) \\
 &= \mu u(-\alpha\cdot v -y \alpha \cdot v+ \alpha \cdot \nabla_v y ) .
\end{align*}
Consequently, the controlled analogue of \eqref{eq:RL_nonl_kfp} is given by
\begin{align}\label{eq:RL_nonl_kfp_con}
 \partial _t y= Ay+Dy- h_1(y) - h_2(y) + uNy + Bu
\end{align}
where $N \in \mathcal{L}(Y,V_v')$, see \cite[Eq.~(3.4)]{BreK23} and $B\in Y$ are defined as
\begin{align}\label{eq:N_B_ops}
 Ny=-y \alpha \cdot v+ \alpha \cdot \nabla_v y,\quad B=-\alpha \cdot v,
\end{align}
and the mappings $h_i$ were defined in \eqref{eq:RL_op}. 
In section \ref{sec:lin_eq} we start by first analyzing the linearization of \eqref{eq:RL_nonl_kfp}.

\paragraph{Notation.} By $C^\infty_0(\mathbb{R}^{2d})$ we denote the set of all functions in $C^\infty(\mathbb{R}^{2d})$ with compact support in $\mathbb{R}^{2d}$. For a linear closed, densely defined operator $A$ with domain $\mathcal{D}(A)$ in a Hilbert space $Z$, we write $A\colon \mathcal{D}(A)\subset Z\to Z$. For $\mathcal{D}(A)$ endowed with the graph norm and the Hilbert space adjoint $A^*$ of $A$ in $Z$, the associated duality pairing $\langle \cdot ,\cdot \rangle _{\mathcal{D}(A^*),[\mathcal{D}(A^*)]'}$ will simply be denoted as $\langle \cdot,\cdot \rangle _{\mathcal{D}}$. Throughout the paper, we will extensively use the weighted (Hilbert) spaces
\begin{equation*}
\begin{aligned}
&Y\!=\!L^2_\mu(\mathbb R^{2d})\! =\! \left\{ y\colon \mathbb R^{2d}\! \to\! \mathbb R \ | \ \mu^{\frac{1}{2}} y \in L^2(\mathbb R^{2d}) \right\},  \| y\| _Y \!= \!\left( \int_{\mathbb R^{2d}} \mu y^2 \, \mathrm{d}x\, \mathrm{d}v\right)^{\frac{1}{2}}, \\[1ex]
&V\!=\!H_\mu^1(\mathbb R^{2d})\!=\!\left\{ y\colon \mathbb R^{2d}\!\to \!\mathbb R\ | \ y\in Y, \nabla y \in Y^{2d} \right\}, \| y\|_V \!= \! \left(\|y\|^2_Y + \|\nabla y \|_{Y^{2d}}^2\right)^{\frac{1}{2}},  \\[1ex]
 & V_v\!=\! H_{\mu,v}^1(\mathbb R^{2d})\!=\!\left\{ y\colon \mathbb R^{2d}\!\to\! \mathbb R\ |  \  y\in Y, \nabla_v y \in Y^d \right\},   \| y\|_{V_v} \!\!=\!\! \left(\|y\|^2_Y \!+ \!\|\nabla_v y \|_{Y^d}^2\right)^{\frac{1}{2}},
 \end{aligned}
 \end{equation*}
where $\nabla_vy=\begin{pmatrix} \tfrac{\partial y}{\partial v_1},\dots, \tfrac{\partial y}{\partial v_d}\end{pmatrix}^\top$.

\section{The linear equation}\label{sec:lin_eq}

In this section we focus on  the abstract linear system
\begin{align}\label{eq:abs_lin_sys}
 \dot{y}(t)&= (A+D)y(t) + g(t), \ \ y(0)=y_0 \in Y
\end{align} 
with $g\in L^2(0,T;V_v')$. Later on $g$ will be replaced by the control terms and the nonlinear terms $h_1,h_2$, cf.~\eqref{eq:RL_nonl_kfp_con}.

From \cite[Theorem 2.1]{ConG10} for vanishing potential ($\Phi\equiv 0$ in the notation used therein), it follows that $A$ generates a contraction semigroup on $Y$. In the following lemma it will be argued that $D\in \mathcal{L}(Y)$. Consequently  $A+D$ as well generates a semigroup $e^{(A+D)t}$, see, e.g., \cite[Theorem 3.2.1]{CurZ95}.

\begin{lemma}\label{lem:D_is_bounded}
  For $Dy=  U*\rho_{\mu yv} \cdot v$ it holds that $D\in \mathcal{L}(Y)$.
\end{lemma}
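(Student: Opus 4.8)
The plan is to show that $y \mapsto Dy = U*\rho_{\mu yv}\cdot v$ is bounded from $Y$ to $Y$ by estimating the three ingredients in turn: the velocity-moment $\rho_{\mu yv}$ as a function of $x$, the convolution with $U$, and finally the multiplication by $v$ against the Gaussian weight $\mu$. First I would bound $\rho_{\mu yv}$ pointwise in $x$: by Cauchy--Schwarz in $v$ with respect to the measure $\mu\dd v$,
\begin{align*}
 |\rho_{\mu yv}(x)| &= \left| \int_{\R^d} v\, \mu(v)\, y(x,v)\dd v\right| \le \left(\int_{\R^d} \|v\|^2 \mu(v)\dd v\right)^{\frac12}\left(\int_{\R^d}\mu(v)\, y(x,v)^2 \dd v\right)^{\frac12},
\end{align*}
so that $|\rho_{\mu yv}(x)|^2 \le C_d \int_{\R^d}\mu(v) y(x,v)^2\dd v$ with $C_d = \int_{\R^d}\|v\|^2\mu\dd v$ finite. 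Integrating in $x$ gives $\|\rho_{\mu yv}\|_{L^2(\R^d)}^2 \le C_d \|y\|_Y^2$, i.e.\ $\rho_{\mu yv}\in L^2(\R^d;\R^d)$ with norm controlled by $\|y\|_Y$.

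Next I would use that $U\in L^1(\R^d)$ with $\|U\|_{L^1}=1$ (it is nonnegative and integrates to one) and apply Young's convolution inequality, which yields $\|U*\rho_{\mu yv}\|_{L^2(\R^d)}\le \|U\|_{L^1}\|\rho_{\mu yv}\|_{L^2(\R^d)} = \|\rho_{\mu yv}\|_{L^2(\R^d)}$. Hence $g(x) := (U*\rho_{\mu yv})(x)$ is a vector field in $L^2(\R^d;\R^d)$, independent of $v$, with $\|g\|_{L^2(\R^d)}\le C_d^{1/2}\|y\|_Y$. Finally, $Dy(x,v) = g(x)\cdot v$, so
\begin{align*}
 \|Dy\|_Y^2 = \int_{\R^{2d}} \mu(v)\, |g(x)\cdot v|^2 \dxdv \le \int_{\R^d} |g(x)|^2 \left(\int_{\R^d} \|v\|^2\mu(v)\dd v\right)\dd x = C_d\, \|g\|_{L^2(\R^d)}^2,
\end{align*}
which combined with the previous bound gives $\|Dy\|_Y \le C_d \|y\|_Y$. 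Linearity of $D$ is immediate, so $D\in\mathcal{L}(Y)$.

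I do not expect a genuine obstacle here; the only points requiring a little care are that the Gaussian weight makes all the $v$-moments finite (so $C_d<\infty$), that $\rho_{\mu yv}$ is well-defined for a.e.\ $x$ by the Cauchy--Schwarz bound together with Fubini, and that the convolution step is legitimate because $\rho_{\mu yv}\in L^2\subset L^2_{\mathrm{loc}}$ and $U\in L^1$. One should also remark that the same computation shows $D$ actually maps into a smaller space (its range consists of functions of the form $g(x)\cdot v$), but boundedness on $Y$ is all that is needed to invoke the bounded-perturbation theorem for the generation of $e^{(A+D)t}$.
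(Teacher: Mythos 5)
Your proof is correct and follows essentially the same route as the paper: bound $\rho_{\mu yv}$ in $L^2(\R^d)$ by $\|y\|_Y$, apply Young's convolution inequality using $\|U\|_{L^1}=1$, and finish with the Gaussian second-moment bound $\int\|v\|^2\mu\dd v=d$. The only cosmetic difference is in the first step, where you apply Cauchy--Schwarz pointwise in $x$ (with respect to $\mu\dd v$) and then integrate, whereas the paper first invokes the Minkowski integral inequality and then Cauchy--Schwarz in $v$; both yield the same bound $\|\rho_{\mu yv}\|_{L^2(\R^d)}\le\sqrt{d}\,\|y\|_Y$, and your version is arguably a touch more direct.
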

\begin{proof}
Let us first consider $\rho_{\mu y v}$ for which we have
\begin{align*}
  \| \rho_{\mu y v} \|_{(L^2(\mathbb R^d))^d} &= \left(\int_{\mathbb R^d} \| \rho_{\mu yv}(x)\|_{\mathbb R^d}^2\,\mathrm{d}x  \right)^{\frac{1}{2}}  = \left(\int_{\mathbb R^d} \left\|\int_{\mathbb R^d} v \mu(v) y(x,v)\,\mathrm{d}v \right\|_{\mathbb R^d}^2 \mathrm{d}x \right)^{\frac{1}{2}}.
\end{align*}
An application of the Minkowski integral inequality yields
\begin{align*}
 \| \rho_{ \mu y v}\|_{(L^2(\mathbb R^d))^d} &\le  \int_{\mathbb R^d}\left( \int_{\mathbb R^d}\left\| v \mu(v) y(x,v)\right\|_{\mathbb R^d}^2\,\mathrm{d}x   \right)^{\frac{1}{2}} \mathrm{d}v \\
 &=  \int _{\mathbb R^d} \|v\mu^{\frac{1}{2}}(v)\|_{\mathbb R^d} \left( \int_{\mathbb R^d} | \mu^{\frac{1}{2}}(v) y(x,v)|^2\,\mathrm{d}x \right)^{\frac{1}{2}} \mathrm{d}v.
\end{align*}
The Cauchy-Schwarz inequality for the $v$ variable now leads to
\begin{equation}\label{eq:derive_y_aux5a}
\begin{aligned}
 \| \rho_{v \mu y}\|_{(L^2(\mathbb R^d))^d} &\le  \left(\int_{\mathbb R^d} \| v\mu^{\frac{1}{2}}(v)\|_{\mathbb R^d}^2\,\mathrm{d}v \right)^{\frac{1}{2}} \left(\int_{\mathbb R^d} \int_{\mathbb R^d} | \mu^{\frac{1}{2}}(v)y(x,v)|^2\, \mathrm{d}x\, \mathrm{d}v \right)^{\frac{1}{2}} \\
 &\le \sqrt{d}  \left(\int_{\mathbb R^d} \int_{\mathbb R^d}\mu (v) | y(x,v)|^2\, \mathrm{d}x\, \mathrm{d}v \right)^{\frac{1}{2}} = \sqrt{d} \| y\|_{Y},
\end{aligned}
\end{equation}
where we have used that
\begin{align*}
 \int_{\mathbb R^d} \|v\mu^{\frac{1}{2}}\|^2_{\mathbb R^d} \,\mathrm{d}v = 
 \int_{\mathbb R^d} v \cdot v\mu \,\mathrm{d}v = -
 \int_{\mathbb R^d} v \cdot \nabla_v \mu \,\mathrm{d}v = \int_{\mathbb R^d} \mathrm{div}_v(v) \mu \ \mathrm{d}v =d.
\end{align*}
Consequently, with Young's convolution inequality we obtain
  \begin{align*}
    \|U*\rho_{\mu yv}\|_{(L^2(\mathbb R^d))^d} \le \underbrace{\| U\|_{L^1(\mathbb R^d)} \|}_{=1} \rho_{\mu yv}\|_{(L^2(\mathbb R^d))^d} \le \sqrt{d} \| y\|_{Y}.
  \end{align*}
 Finally, we arrive at 
\begin{align*}
 \|Dy\|_Y^2 &= \int_{\mathbb R^d}\int_{\mathbb R^d} | (U*\rho_{\mu y v})(x) \cdot v|^2 \mu(v)\,\mathrm{d}x \, \mathrm{d}v \\
 &\le  \int_{\mathbb R^d}\int_{\mathbb R^d} \| (U*\rho_{\mu y v})(x)\|^2\cdot \|v\|^2 \mu(v)\,\mathrm{d}x \, \mathrm{d}v \\
 &= \int_{\mathbb R^d} \| U*\rho _{\mu y v}(x) \|^2 \,\mathrm{d}x \int_{\mathbb R^d} \| v\|^2\mu(v)\,\mathrm{d}v \le \| y\|_Y^2 .
\end{align*}
\end{proof}

On $C_0^\infty(\mathbb R^{2d})$, let us consider the operator $R$ defined by
\begin{align*}
 Ry := -\Delta_v y +v \cdot \nabla _v y + y.
\end{align*}
Note that utilizing \eqref{eq:mu_aux} we obtain
\begin{equation}\label{eq:sqrt_R_aux_1}
\begin{aligned}
 \langle Ry,y\rangle_Y&=\int_{\mathbb R^{2d}} -\mu y \Delta_v y + \mu yv \cdot \nabla_v y + \mu y^2 \,\mathrm{d}x\,\mathrm{d}v  \\
 &= \int_{\mathbb R^{2d}} \nabla_v(\mu y) \cdot \nabla_v y + \mu yv \cdot \nabla_vy + \mu y^2\,\mathrm{d}x\,\mathrm{d}v \\
 &= \int_{\mathbb R^{2d}} \mu \nabla_v y \cdot \nabla_v y+ \underbrace{y\nabla_v \mu \cdot \nabla_v y + \mu yv \cdot \nabla_vy}_{=0} + \mu y^2\,\mathrm{d}x\,\mathrm{d}v \\
 &= \|\nabla_v y\|_{Y^d}^2 + \|y\|_Y^2 = \|y\|_{V_v}^2 = \langle y,Ry\rangle_Y
\end{aligned}
\end{equation}
showing that $R$ is symmetric and coercive on $C_0^\infty(\mathbb R^{2d})$. It is therefore closable as an operator in $Y$, see \cite[Chapter II, Proposition 3.14 ]{EngN99}, and we may consider $\overline{R}$. In fact, this operator $\overline{R}$ is self-adjoint in $Y$. This can be verified utilizing \cite[Problem V.3.17]{Kat80}. For this it suffices to show that $\mathrm{range}(R-i\zeta )$ is dense in $Y$, for some positive and some negative $\zeta$. For the case of the classical Fokker-Planck operator with $y$ only depending on one variable and a smooth potential $V(v)$, given by $V(v)=\frac{1}{2}\|v\|^2$ in our case, we can refer to \cite[Corollary 3.2.2, Appendix A.5]{BakGL14}. The desired density in the  case  with $y$ a function of $x$ and of $v$ can then  be obtained by observing that for 
$\{\varphi_j\}_{j=1}^{\infty}$ dense in $L^2_\mu(\mathbb{R^d})$ and   $\{\psi_k\}_{k=1}^{\infty}$  dense in  $H^1_\mu(\mathbb{R^d})$, 
the set $ \{\sum_{j=1,k=1}^{n,m} \varphi_j(x) \psi_k(v): n\in \mathbb{N}, m\in \mathbb{N} \} $ is dense in $V_v$ .

From, e.g., \cite[Examples A.4.2/A.4.3]{CurZ95} and \cite[Problem V.3.32, Theorem V.3.35]{Kat80} it is well-known that $\overline{R}^{-1}\in\mathcal{L}(Y)$ is also a (uniformly) positive self-adjoint operator for which there exists a (uniformly) positive self-adjoint square root $\overline{R}^{-\frac{1}{2}}:=(\overline{R}^{-1})^{\frac{1}{2}}$. In fact, we also have that
$\overline{R}^{-\frac{1}{2}}=(\overline{R}^{\frac{1}{2}})^{-1}$, where $\overline{R}^{\frac{1}{2}}$ is the unique (uniformly) positive self-adjoint square root of $\overline{R}$, i.e., $\overline{R}y = \overline{R}^{\frac{1}{2}}\overline{R}^{\frac{1}{2}}y$ and $\mathcal{D}(\overline{R}^{\frac{1}{2}})\supset \mathcal{D}(\overline{R})$.

Since $\overline{R}^{-1}\in\mathcal{L}(Y)$, we know that $\overline{R}^{-\frac{1}{2}}\in\mathcal{L}(Y)$. We even have $\overline{R}^{-\frac{1}{2}}\in\mathcal{L}(Y,V_v)$. Indeed, for $y \in Y$, consider $z=\overline{R}^{-\frac{1}{2}}y$ and note that
\begin{align*}
 \|y\|_Y^2=\langle y,y\rangle_Y = \langle \overline{R}^{\frac{1}{2}}z,\overline{R}^{\frac{1}{2}}z\rangle_Y =\langle \overline{R}z,z\rangle_Y \stackrel{\text{\eqref{eq:sqrt_R_aux_1}}}{=} \|z\|_{V_v}^2 = \| \overline{R}^{-\frac{1}{2}}y\|_{V_v}^2.
\end{align*}
We then also obtain $(\overline{R}^{-\frac{1}{2}})'\in\mathcal{L}(V_v',Y)$ which, since $
 (\overline{R}^{-\frac{1}{2}})'=((\overline{R}^{\frac{1}{2}})^{-1})'=((\overline{R}^{\frac{1}{2}})')^{-1}$ and $\overline{R}^{\frac{1}{2}}$ is self-adjoint finally implies $\overline{R}^{-\frac{1}{2}}\in\mathcal{L}(V_v',Y)$.

Let us return to \eqref{eq:abs_lin_sys} which after the above considerations we interpret as
\begin{align*}
 \dot{y}(t)&= (A+D)y(t) +\overline{R}^{\frac{1}{2}} w(t)
\end{align*}
where $w(t)=\overline{R}^{-\frac{1}{2}}g(t)$. In particular, we emphasize that $w\in L^2(0,T;Y)$ and $\overline{R}^{\frac{1}{2}}\in \mathcal{L}(Y,V_v')$. The following result asserts that while $\overline{R}^{\frac{1}{2}}$ is not bounded w.r.t.~the state space $Y$, it is nevertheless compatible with the semigroup generated by $A+D$ in the sense of \emph{admissibility}. For an introduction to this control theoretic concept, we refer to, e.g., \cite{TucW09}.

\begin{lemma}\label{lem:sqrtR_is_admissible}
 $\overline{R}^{\frac{1}{2}}$ is an admissible control operator for $e^{(A+D)t}$.
\end{lemma}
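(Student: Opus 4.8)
The plan is to verify admissibility by means of the standard duality characterization: a control operator $B \in \mathcal{L}(U, [\mathcal{D}(A^*)]')$ is admissible for a $C_0$-semigroup $e^{At}$ if and only if its adjoint $B^*$ is an admissible observation operator for the adjoint semigroup $e^{A^*t}$, i.e.\ there is a constant $c_T$ with $\int_0^T \| B^* e^{A^*t} z\|_U^2 \dd t \le c_T \|z\|_Y^2$ for all $z \in \mathcal{D}(A^*)$ (see \cite{TucW09}). Here $U = Y$, and since $\overline{R}^{1/2}$ is self-adjoint, the observation operator is $\overline{R}^{1/2}$ itself, acting on the semigroup generated by $(A+D)^* = A^* + D^*$. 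Concretely I would set $z(t) = e^{(A+D)^*t} z_0$ for $z_0 \in \mathcal{D}(A^*) \subset \mathcal{D}((A+D)^*)$ and aim to bound $\int_0^T \|\overline{R}^{1/2} z(t)\|_Y^2 \dd t = \int_0^T \|z(t)\|_{V_v}^2 \dd t$, using \eqref{eq:sqrt_R_aux_1}, by $c_T \|z_0\|_Y^2$.

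First I would record the analogue of the dissipativity estimate for $A^*$: from \eqref{eq:adj_A}, an integration by parts against $\mu z$ in $L^2_\mu$ gives $\langle A^* z, z\rangle_Y = -\|\nabla_v z\|_{Y^d}^2$, since the transport term $v\cdot\nabla_x z$ is skew-adjoint and contributes nothing to the real part (this is exactly the computation underlying the contraction property cited from \cite{ConG10}, applied to $A^*$). Next, since $D \in \mathcal{L}(Y)$ by Lemma~\ref{lem:D_is_bounded}, also $D^* \in \mathcal{L}(Y)$ with $\|D^*\|_{\mathcal{L}(Y)} = \|D\|_{\mathcal{L}(Y)} =: c_D$. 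Testing the equation $\dot z = (A^* + D^*) z$ with $\mu z$ and using these two facts yields the energy identity
\begin{align*}
  \tfrac{1}{2}\ddt \|z(t)\|_Y^2 = -\|\nabla_v z(t)\|_{Y^d}^2 + \langle D^* z(t), z(t)\rangle_Y \le -\|\nabla_v z(t)\|_{Y^d}^2 + c_D \|z(t)\|_Y^2.
\end{align*}
Integrating over $(0,T)$ and moving the gradient term to the left,
\begin{align*}
  \int_0^T \|\nabla_v z(t)\|_{Y^d}^2 \dd t \le \tfrac{1}{2}\|z_0\|_Y^2 + c_D \int_0^T \|z(t)\|_Y^2 \dd t \le \left(\tfrac{1}{2} + c_D T e^{2 c_D T}\right)\|z_0\|_Y^2,
\end{align*}
where the last step uses Grönwall applied to the same energy inequality (dropping the nonpositive gradient term) to get $\|z(t)\|_Y^2 \le e^{2c_D t}\|z_0\|_Y^2$. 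Adding $\int_0^T \|z(t)\|_Y^2 \dd t \le T e^{2c_D T}\|z_0\|_Y^2$ gives exactly $\int_0^T \|z(t)\|_{V_v}^2 \dd t \le c_T \|z_0\|_Y^2$ with an explicit $c_T$ depending only on $T$ and $c_D$. A routine density argument (the estimate extends from $\mathcal{D}(A^*)$ to all of $Y$ by continuity, and this is precisely the admissibility bound) then finishes the proof.

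The step requiring the most care is the rigorous justification of the energy identity: a priori $z(t) = e^{(A+D)^*t} z_0$ is only a mild solution, and one does not immediately know $z(t) \in V_v$ pointwise in $t$, so the formal integration by parts producing $-\|\nabla_v z\|_{Y^d}^2$ must be justified. The clean way is to argue first for $z_0 \in \mathcal{D}((A+D)^*) = \mathcal{D}(A^*)$, where $z$ is a classical solution and $t \mapsto \|z(t)\|_Y^2$ is differentiable, carry out the computation there, and then pass to general $z_0$ by density — which is legitimate precisely because admissibility is the statement that the resulting map extends continuously. One should also note that the skew-adjointness of the transport part $v\cdot\nabla_x$ and the identity $\langle A^* z, z\rangle_Y = -\|\nabla_v z\|_{Y^d}^2$ hold on $\mathcal{D}(A^*)$ by the same weighted integration-by-parts manipulations already used in \eqref{eq:sqrt_R_aux_1} and \eqref{eq:mu_aux}; no new analytic input beyond boundedness of $D$ is needed, which is why hypocoercivity (as opposed to mere dissipativity in $v$) does not actually enter this particular lemma — the $\nabla_v$-coercivity of $-A^*$ alone suffices to control the $V_v$-norm.
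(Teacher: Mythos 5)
Your proof is correct, and it takes a genuinely different, more hands-on route than the paper. Both arguments rest on the same two ingredients -- the duality between admissible control and observation operators (TucW09 Thm.~4.4.3) and the dissipativity identity $\langle A^*z,z\rangle_Y=-\|\nabla_v z\|_{Y^d}^2$ on $\mathcal{D}(A^*)$ -- but they diverge in the middle. The paper first strips off the bounded perturbation $D$ using \cite[Thm.~5.4.2]{TucW09} and works with the shifted generator $A-\alpha I$; it then verifies the \emph{static Lyapunov inequality} $\langle \alpha z,(A-I)z\rangle_Y\le -\|(\overline{R}^{1/2})'z\|_Y^2$ with $\alpha=\|(\overline{R}^{1/2})'\|_{\mathcal{L}(V_v,Y)}^2$, invokes the infinite-time admissibility criterion \cite[Thm.~5.1.1(d)]{TucW09} for $e^{(A-\alpha I)^*t}$, and transfers back to $e^{(A+D)^*t}$ by the same perturbation theorem. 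You instead attack the \emph{finite-time observability bound} directly: integrate the energy inequality $\tfrac12\ddt\|z\|_Y^2\le -\|\nabla_v z\|_{Y^d}^2+c_D\|z\|_Y^2$ in time for the perturbed adjoint semigroup, close with Gr\"onwall, and identify $\int_0^T\|\overline{R}^{1/2}z(t)\|_Y^2\,\mathrm{d}t$ with $\int_0^T\|z(t)\|_{V_v}^2\,\mathrm{d}t$ using \eqref{eq:sqrt_R_aux_1}. This avoids both the detour through a shifted semigroup and the infinite-time-to-finite-time step, and it makes the constant $c_T$ explicit in $T$ and $\|D\|_{\mathcal{L}(Y)}$. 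What the paper's route buys instead is a statement about infinite-time admissibility (uniform in $T$) for $A-\alpha I$, which is logically a bit stronger en route even if only finite-time admissibility for $A+D$ is used; your route is the more economical one for the stated lemma. Two small points worth tightening: you should explicitly record that the identification $(\overline{R}^{1/2})'z=\overline{R}^{1/2}z$ and $\|\overline{R}^{1/2}z\|_Y^2=\|z\|_{V_v}^2$ require $z\in V_v\supset\mathcal{D}(A^*)$, and that the formula $\langle A^*z,z\rangle_Y=-\|\nabla_v z\|_{Y^d}^2$ is first derived on $C_0^\infty(\mathbb{R}^{2d})$ and extended to $\mathcal{D}(A^*)$ using that $C_0^\infty$ is a core for $A^*$ and $\mathcal{D}(A^*)\subset V_v$ (as the paper does via \cite[Lemmas 2.3, 3.2]{BreK23}); you gesture at this in your closing paragraph but it deserves a sentence on its own.
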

\begin{proof}
Since $D\in \mathcal{L}(Y)$, with \cite[Theorem 5.4.2]{TucW09} it follows that $\overline{R}^{\frac{1}{2}}$ is admissible for $e^{(A+D)t}$ if and only if $\overline{R}^{\frac{1}{2}}$ is admissible for $e^{(A-\alpha I)t}$ for any $\alpha \in \mathbb R$. Since infinite-time admissibility implies admissibility, we may use \cite[Theorem 5.1.1 (d)]{TucW09} to prove the assertion. For this purpose, observe that for arbitrary $z \in C_0^\infty(\mathbb R^{2d})$ it holds that
\begin{align*}
 \langle  z, A^*z\rangle_Y &=  \langle Az,z\rangle_Y = \int_{\mathbb R^{2d}} (z\Delta_v z - z v \cdot \nabla_v z - zv \cdot \nabla_x z) \mu\,\mathrm{d}x\,\mathrm{d}v  \\
 &= \int_{\mathbb R^{2d}} -\mu \| \nabla_v z \|_{\mathbb R^d}^2-z \nabla_v z \cdot \underbrace{\nabla_v \mu}_{=-v\mu} - \mu z v \cdot \nabla_v z - \mu z v \cdot \nabla_x z \,\mathrm{d}x\,\mathrm{d}v \\
 &= -\int _{\mathbb R^d} \mu \| \nabla_v z \|_{\mathbb R^d}^2 + \frac{1}{2}\mu zv \cdot \nabla_x z+\frac{1}{2}\mu zv \cdot \nabla_x z \,\mathrm{d}x\, \mathrm{d}v \\
 &=-\int _{\mathbb R^d} \mu \| \nabla_v z \|_{\mathbb R^d}^2 + \frac{1}{2}\mu zv \cdot \nabla_x z-\frac{1}{2}\mu (\nabla_x z \cdot v) z \,\mathrm{d}x\, \mathrm{d}v \\
 &= -\int _{\mathbb R^d} \mu \| \nabla_v z \|_{\mathbb R^d}^2\,\mathrm{d}x\, \mathrm{d}v,
\end{align*}
where in the second to last step we  used that $\mu$ and $v$ are  independent of $x$. For  $\overline{R}^{\frac{1}{2}}\in \mathcal{L}( Y,V_v')$, consider $(\overline{R}^{\frac{1}{2}})'\in \mathcal{L}(V_v,Y)$. From \cite[Lemma 2.3, Lemma 3.2]{BreK23} (applied with $G\equiv 0$) we have that
$\mathcal{D}(A^*)\subset V_v$ and that $C_0^\infty(\mathbb R^{2d})$ is dense in $\mathcal{D}(A^*)$. With the above computations we thus obtain for $\alpha=\|(\overline{R}^{\frac{1}{2}})'\|_{\mathcal{L}(V_v,Y)}^2$ and for all $z \in \mathcal{D}(A^*)$ that
\begin{align*}
  \langle  \alpha z,(A-I )z \rangle_Y&= - \alpha (\| z\|_Y^2 + \| \nabla_v z \|_{Y^d}^2)  = -\alpha \|z\|_{V_v}^2 = -\|(\overline{R}^{\frac{1}{2}})'\|_{\mathcal{L}(V_v,Y)}^2 \|z\|_{V_v}^2 \le -  \| (\overline{R}^{\frac{1}{2}})' z\|_Y^2  .
\end{align*}
Since $\alpha I\succ 0$, \cite[Theorem 5.1.1 (d)]{TucW09} shows that $(\overline{R}^{\frac{1}{2}})'$ is an infinite-time admissible observation operator for $e^{(A-\alpha I)^*t}$. Consequently, $(\overline{R}^{\frac{1}{2}})'$ is an admissible observation operator for $e^{(A+D)^*t}$ and with \cite[Theorem 4.4.3]{TucW09} this is equivalent to $\overline{R}^{\frac{1}{2}}$ being an admissible control operator for $e^{(A+D)t}$.
\end{proof}

Following \cite[Definition 4.1.1]{TucW09}, for $g \in L^2(0,T;Y_{-1})$, we refer to $y$ as a solution of \eqref{eq:abs_lin_sys} in $Y_{-1}:=\mathcal{D}(A^*)'$ if $y \in L^1(0,T;Y)\cap C([0,T];Y_{-1})$ and it satisfies for every $t\ge 0$ 
\begin{align}\label{eq:strong_sol_int}
  y(t)-y_0=\int_0^t (A+D)y(s) + g(s) \, \mathrm{d}s \ \ \text{in } Y_{-1},
\end{align}
which is equivalent to 
\begin{align}\label{eq_strong_sol_var}
\langle y(t)-y_0,\psi \rangle_{\mathcal{D}} = \int_0^t \left[ \langle y(s),(A+D)^* \psi \rangle _Y + \langle g(s),\psi \rangle_{\mathcal{D}} \right] \, \mathrm{d}s
\end{align}
for every $t\ge 0$ and every $\psi \in \mathcal{D}(A^*)$, see \cite[Remark 4.1.2]{TucW09}. If $y$ is a solution in $Y_{-1}$, then $y$ is given by 
\begin{align}\label{eq:mild_sol}
 y(t) = e^{(A+D)t}y_0 + \int_0^t e^{(A+D)(t-s)}g(s)\,\mathrm{d}s,
\end{align}
see \cite[Proposition 4.1.4]{TucW09}.

\begin{corollary}\label{cor:mild_sol_lin_sys}
  For every $y_0\in Y, g\in L^2(0,T;V_v')$ the initial value problem
  \begin{align}\label{eq:lin_per}
   \dot{y}(t) = (A+D)y(t)+g(t), \ \ y(0)=y_0
  \end{align}
has a unique solution $y \in C([0,T];Y) \cap H^1(0,T;Y_{-1})$. 
Moreover, there exists a constant $C$, only depending on $T$ such that
\begin{align*}
 \|y\|_{L^\infty(0,T;Y)} \le C ( \| y_0\|_Y + \|g\|_{L^2(0,T;V_v')} ).
\end{align*}
\end{corollary}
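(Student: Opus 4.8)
The plan is to combine the admissibility of $\overline{R}^{\frac{1}{2}}$ (Lemma \ref{lem:sqrtR_is_admissible}) with the factorization $g = \overline{R}^{\frac{1}{2}} w$, $w = \overline{R}^{-\frac{1}{2}} g$, which was set up just before Lemma \ref{lem:sqrtR_is_admissible}. First I would observe that since $\overline{R}^{-\frac{1}{2}} \in \mathcal{L}(V_v', Y)$, the hypothesis $g \in L^2(0,T;V_v')$ gives $w \in L^2(0,T;Y)$ with $\|w\|_{L^2(0,T;Y)} \le \|\overline{R}^{-\frac{1}{2}}\|_{\mathcal{L}(V_v',Y)} \|g\|_{L^2(0,T;V_v')}$. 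Then \eqref{eq:lin_per} reads $\dot y = (A+D)y + \overline{R}^{\frac{1}{2}} w$, and the candidate solution is the mild-solution formula \eqref{eq:mild_sol}, i.e. $y(t) = e^{(A+D)t} y_0 + \int_0^t e^{(A+D)(t-s)} \overline{R}^{\frac{1}{2}} w(s)\,\mathrm{d}s$. Because $\overline{R}^{\frac{1}{2}}$ is an admissible control operator for $e^{(A+D)t}$ and $w \in L^2(0,T;Y)$, the map $w \mapsto \int_0^t e^{(A+D)(t-s)}\overline{R}^{\frac{1}{2}}w(s)\,\mathrm{d}s$ is, by the definition of admissibility, a bounded operator from $L^2(0,T;Y)$ into $C([0,T];Y)$ (see \cite[Ch.~4]{TucW09}); together with $e^{(A+D)t}y_0 \in C([0,T];Y)$ this yields $y \in C([0,T];Y)$ and the asserted bound $\|y\|_{L^\infty(0,T;Y)} \le C(\|y_0\|_Y + \|g\|_{L^2(0,T;V_v')})$ with $C$ depending only on $T$ (through the admissibility constant and $\|e^{(A+D)\cdot}\|$ on $[0,T]$, using $\|w\|_{L^2(0,T;Y)} \lesssim \|g\|_{L^2(0,T;V_v')}$).

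Next I would verify that this $y$ is indeed a solution in $Y_{-1} = \mathcal{D}(A^*)'$ in the sense of \eqref{eq:strong_sol_int}/\eqref{eq_strong_sol_var}. By \cite[Proposition 4.1.4]{TucW09} (and the discussion preceding Corollary \ref{cor:mild_sol_lin_sys}), for an admissible control operator and $g = \overline{R}^{\frac{1}{2}}w \in L^2(0,T;Y_{-1})$ — which holds since $\overline{R}^{\frac{1}{2}} \in \mathcal{L}(Y,V_v')$ and $V_v' \hookrightarrow Y_{-1}$ (as $\mathcal{D}(A^*) \subset V_v$ by Lemma \ref{lem:sqrtR_is_admissible}'s proof / \cite[Lemma 2.3, Lemma 3.2]{BreK23}) — the mild solution \eqref{eq:mild_sol} satisfies \eqref{eq:strong_sol_int}. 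From \eqref{eq:strong_sol_int}, $\dot y = (A+D)y + g$ holds in $Y_{-1}$ a.e.; since $y \in L^\infty(0,T;Y)$, $(A+D)y \in L^2(0,T;Y_{-1})$, and $g \in L^2(0,T;Y_{-1})$, we get $\dot y \in L^2(0,T;Y_{-1})$, hence $y \in H^1(0,T;Y_{-1})$.

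For uniqueness, suppose $y_1, y_2$ are two solutions in the stated class; then $z = y_1 - y_2$ satisfies $z(0)=0$ and $\dot z = (A+D)z$ in $Y_{-1}$ with $z \in C([0,T];Y)$. Testing \eqref{eq_strong_sol_var} with $\psi \in \mathcal{D}(A^*)$ and using that $z$ is an $Y_{-1}$-solution of the homogeneous equation, one identifies $z(t) = e^{(A+D)t} z(0) = 0$ — i.e. the mild-solution representation \eqref{eq:mild_sol} is forced, which gives $z \equiv 0$; alternatively, this is the uniqueness statement in \cite[Proposition 4.1.4, Remark 4.1.2]{TucW09}.

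The main obstacle — though it is more bookkeeping than genuine difficulty — is the careful juggling of the several spaces $Y$, $V_v'$, $Y_{-1} = \mathcal{D}(A^*)'$ and the identification $g \in L^2(0,T;V_v') \subset L^2(0,T;Y_{-1})$, so that the abstract results of \cite{TucW09} (stated for $L^2(0,T;Y_{-1})$ inhomogeneities and admissible control operators valued in $Y_{-1}$) apply to our $g = \overline{R}^{\frac{1}{2}}w$; this rests on $\mathcal{D}(A^*) \subset V_v$, which is exactly what was recalled from \cite{BreK23} in the proof of Lemma \ref{lem:sqrtR_is_admissible}. Once the inclusions and the factorization are in place, the statement follows directly from admissibility and the cited propositions in \cite{TucW09}.
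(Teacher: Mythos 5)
Your proposal is correct and follows essentially the same route as the paper: factorize $g = \overline{R}^{\frac{1}{2}}w$ with $w = \overline{R}^{-\frac{1}{2}}g \in L^2(0,T;Y)$, invoke the admissibility of $\overline{R}^{\frac{1}{2}}$ from Lemma~\ref{lem:sqrtR_is_admissible} together with the inclusion $V_v' \subseteq \mathcal{D}(A^*)'$, and read off existence, uniqueness, continuity in $Y$, and the $L^\infty(0,T;Y)$ bound from the controllability-map machinery in \cite[Ch.~4]{TucW09}. The only cosmetic difference is that the paper cites \cite[Proposition 4.2.5]{TucW09} directly and uses the monotonicity of $t\mapsto\|\Phi^t\|$ for the final estimate, whereas you unpack the same facts slightly more explicitly.
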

\begin{proof}
First, we observe that $V_v'\subseteq \mathcal{D}(A^*)'$. This follows from \cite[Lemma 2.3]{BreK23} by setting $G\equiv 0$ and $\overline{\mathcal{L}}=A^*$ therein.
Since $\overline{R}^{\frac{1}{2}} \in \mathcal{L}(Y,V_v')$ is admissible, we can consider the controllability map 
 \begin{align*}
  \Phi^t\colon w \mapsto \Phi^t w &= \int_0^t e^{(A+D)(t-s)}\overline{R}^{\frac{1}{2}}w(s)\,\mathrm{d}s
 \end{align*}
which satisfies 
$\Phi^t \in \mathcal{L}(L^2(0,t;Y),Y).$
Setting $w(t)=\overline{R}^{-\frac{1}{2}}g(t)$, this implies that \eqref{eq:lin_per} admits a unique solution with the specified regularity properties, see \cite[Proposition 4.2.5]{TucW09}.

 Since a solution $y \in Y_{-1}$ is also a mild solution, we obtain
\begin{align*}
 y(t) &= e^{(A+D)t}y_0 + \int_0^t e^{(A+D)(t-s)}g(s)\,\mathrm{d}s =e^{(A+D)t}y_0 +  \Phi^t (\overline{R}^{-\frac{1}{2}}g)
\end{align*}
with $\overline{R}^{-\frac{1}{2}}\in \mathcal{L}(V_v',Y)$ as before.
It is known that controllability maps are non decreasing \cite[Proof of  Proposition 4.2.4]{TucW09} in the sense that
\begin{align}\label{eq:conmap_nondec}
 \|\Phi^{t_1}\|_{ \mathcal{L}(L^2(0,t_1),Y)}\le
 \|\Phi^{t_2}\|_{ \mathcal{L}(L^2(0,t_2),Y)}, 
\end{align}
for all $t_1\le t_2.$ We therefore arrive at
\begin{align*}
 \|y(t)\|_Y &\le \|e^{(A+D)t}y_0\|_Y +  \|\Phi^t ( \overline{R}^{-\frac{1}{2}}g) \|_Y \\
 &\le \|e^{(A+D)t}y_0\|_Y + \|\Phi^t  \|_{\mathcal{L}(L^2(0,t;Y),Y)} \|\overline{R}^{-\frac{1}{2}}g\|_{L^2(0,t;Y)} \\
 &\le c_1\|y_0\|_Y + c_2 \|\overline{R}^{-\frac{1}{2}}\|_{\mathcal{L}(V_v',Y)} \| g\|_{L^2(0,T;V_v')}
\end{align*}
where in the last step we used \eqref{eq:conmap_nondec}.
\end{proof}

For the remainder of the manuscript, we define
\begin{align*}
 \VERT y \VERT := \| y\|_{L^2(0,T;V_v)}+\|y\|_{L^{\infty}(0,T;Y)}.
\end{align*}

\begin{proposition}\label{prop:more_reg} Let $y$ denote the solution to \eqref{eq:lin_per} from Corollary \ref{cor:mild_sol_lin_sys}. Then, for all $T>0$ there exists a constant $\widehat{C}=\widehat{C}(T)$ s.t.
  \begin{align}
  \mathrm{max}( \VERT y\VERT,\|\dot y\|_{L^2(0,T;Y_{-1})} )  &\le \widehat{C} \left( \| y_0\|_Y + \| g\|_{L^2(0,T;V_v')} \right).
\end{align}
\end{proposition}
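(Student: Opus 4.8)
The plan is to supplement the $L^\infty(0,T;Y)$ estimate already furnished by Corollary~\ref{cor:mild_sol_lin_sys} with an $L^2(0,T;V_v)$ bound on $y$ and an $L^2(0,T;Y_{-1})$ bound on $\dot y$. The bound on $\dot y$ can be read off the equation. Indeed, $A$ extends to a bounded operator $Y\to Y_{-1}=\mathcal D(A^*)'$ with norm at most $1$, since for $z\in Y$ and $\psi\in\mathcal D(A^*)$ one has $|\langle Az,\psi\rangle_{\mathcal D}|=|\langle z,A^*\psi\rangle_Y|\le\|z\|_Y\|A^*\psi\|_Y\le\|z\|_Y\|\psi\|_{\mathcal D(A^*)}$; together with $D\in\mathcal L(Y)$ (Lemma~\ref{lem:D_is_bounded}) and the continuous embeddings $Y\hookrightarrow Y_{-1}$ and $V_v'\hookrightarrow Y_{-1}$ (the latter noted in the proof of Corollary~\ref{cor:mild_sol_lin_sys}), the identity $\dot y(t)=(A+D)y(t)+g(t)$ in $Y_{-1}$, i.e.\ \eqref{eq:strong_sol_int}, gives $\|\dot y(t)\|_{Y_{-1}}\le c_1\|y(t)\|_Y+c_2\|g(t)\|_{V_v'}$ for a.e.\ $t$; squaring, integrating over $(0,T)$ and inserting the $L^\infty(0,T;Y)$ bound then yields the desired estimate for $\|\dot y\|_{L^2(0,T;Y_{-1})}$.

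The heart of the matter is the $L^2(0,T;V_v)$ estimate, obtained by a (formally standard) energy argument: testing the equation with $y$ and using $\langle Ay,y\rangle_Y=-\|\nabla_v y\|_{Y^d}^2$ — the computation already performed in the proof of Lemma~\ref{lem:sqrtR_is_admissible}, valid on $C_0^\infty(\R^{2d})$ and extending to all of $\mathcal D(A)$ by density of this core (so that, in particular, $\mathcal D(A)\subset V_v$; cf.\ \cite{BreK23}) — together with $|\langle Dy,y\rangle_Y|\le\|y\|_Y^2$ (from $\|D\|_{\mathcal L(Y)}\le1$) and $|\langle g,y\rangle_{\mathcal D}|\le\|g\|_{V_v'}\|y\|_{V_v}$, one arrives at
\[
  \tfrac12\ddt\|y\|_Y^2+\|\nabla_v y\|_{Y^d}^2\le\|y\|_Y^2+\|g\|_{V_v'}\|y\|_{V_v}.
\]
Writing $\|y\|_{V_v}^2=\|y\|_Y^2+\|\nabla_v y\|_{Y^d}^2$ and using Young's inequality to absorb $\tfrac12\|\nabla_v y\|_{Y^d}^2$ into the left-hand side, then integrating over $(0,t)$ and bounding $\int_0^t\|y\|_Y^2$ by $T\|y\|_{L^\infty(0,T;Y)}^2$ via Corollary~\ref{cor:mild_sol_lin_sys} (or directly by Gronwall's inequality, which re-derives that bound), one obtains $\|\nabla_v y\|_{L^2(0,T;Y^d)}\le C(\|y_0\|_Y+\|g\|_{L^2(0,T;V_v')})$, and combining the three bounds gives the asserted estimate on $\max(\VERT y\VERT,\|\dot y\|_{L^2(0,T;Y_{-1})})$.

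The one genuine obstacle is the rigorous justification of the energy identity: the solution from Corollary~\ref{cor:mild_sol_lin_sys} is a priori only in $C([0,T];Y)\cap H^1(0,T;Y_{-1})$, too little regularity to differentiate $\|y(t)\|_Y^2$ and pair with $y(t)$ in $V_v$ — precisely the regularity we want to prove. I would handle this by approximation. Choose $y_{0,n}\in\mathcal D(A)$ with $y_{0,n}\to y_0$ in $Y$ and $g_n\in C^1([0,T];Y)$ with $g_n\to g$ in $L^2(0,T;V_v')$ (possible since $Y$ is dense in $V_v'$). The corresponding mild solutions $y_n$ are classical solutions in $C^1([0,T];Y)\cap C([0,T];\mathcal D(A))$, so the energy identity holds in the classical sense, and the computation above gives bounds on $y_n$ in $L^\infty(0,T;Y)\cap L^2(0,T;V_v)$ that are uniform in $n$ since all constants are data-independent. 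By the continuous-dependence estimate of Corollary~\ref{cor:mild_sol_lin_sys} (mild-solution formula together with admissibility of $\overline R^{\frac12}$), $y_n\to y$ in $C([0,T];Y)$; extracting a subsequence that also converges weakly in $L^2(0,T;V_v)$, the limit must coincide with $y$, and weak lower semicontinuity of the norm transfers the uniform bound to $y$. (Alternatively, one may invoke the variational solution theory of \cite{BreK23}: its $W(0,T)$-solution belongs to $C([0,T];Y)\cap H^1(0,T;Y_{-1})$ and hence, by the uniqueness part of Corollary~\ref{cor:mild_sol_lin_sys}, coincides with $y$, carrying the desired estimate.)
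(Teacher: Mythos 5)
Your proof is correct but proceeds by a genuinely different regularization than the paper. The paper smooths the \emph{operator}: it introduces the parabolic regularization $A_\varepsilon = A + \varepsilon\Delta_x$, for which $\langle A_\varepsilon y,y\rangle_{V',V}=-\|\nabla_v y\|_Y^2-\varepsilon\|\nabla_x y\|_Y^2$ and the elliptic (variational) framework immediately gives $y_\varepsilon\in L^2(0,T;V)\cap H^1(0,T;V')$, so that the energy inequality is a theorem rather than something to be justified by a core argument; the estimate, being uniform in $\varepsilon$, survives $\varepsilon\to 0$. You instead smooth the \emph{data}: with $y_{0,n}\in\mathcal D(A)$, $g_n\in C^1([0,T];Y)$, the mild solution of the unmodified equation is classical, the energy identity holds by direct differentiation, and the uniform bound is transferred to the limit via continuous dependence (Corollary~\ref{cor:mild_sol_lin_sys}) and weak lower semicontinuity. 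Both are standard ways around the mismatch between the regularity one has a priori ($C([0,T];Y)\cap H^1(0,T;Y_{-1})$) and the regularity needed to test with $y$ itself. One small point worth making explicit in your route: the identity $\langle Az,z\rangle_Y=-\|\nabla_v z\|_{Y^d}^2$ is verified on $C_0^\infty(\R^{2d})$, and its extension to $\mathcal D(A)$ requires that $C_0^\infty(\R^{2d})$ is a core for $A$ and that $\mathcal D(A)\hookrightarrow V_v$ continuously. The paper only cites these facts for $A^*$ (from \cite{BreK23}); they do carry over to $A$ since $A$ and $A^*$ differ only in the sign of the transport term $v\cdot\nabla_x$ and the invariant measure $\mu$ is $x$-independent, but that transfer should be stated rather than implicitly assumed. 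Your $\dot y$ estimate, via boundedness of $A\colon Y\to Y_{-1}$, of $D$, and the embedding $V_v'\hookrightarrow Y_{-1}$, is clean and matches what the paper also (implicitly) uses.
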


\begin{proof}
  For the proof, we can follow the arguments provided in the proof of \cite[Proposition 3.3]{BreK23}. For this purpose, we define $A_{\varepsilon} \in \mathcal{L}(V,V')$ as
  \begin{align*}
   A_{\varepsilon} := A + \varepsilon \Delta_x
  \end{align*}
  where $\varepsilon>0$ and recall from \cite{BreK23} that it generates an analytic semigroup in $Y$ which in particular satisfies
  \begin{align}\label{eq:dual_pair}
   \langle A_\varepsilon y,y\rangle_{V',V} = -\| \nabla _v y\|_Y^2 - \varepsilon \| \nabla _x y\|_Y^2
  \end{align}
  for all $ y\in V$. We next consider the perturbed equation
  \begin{align}\label{eq:lin_per_eps}
   \dot{y}_{\varepsilon}(t) = (A_\varepsilon+D)y_{\varepsilon}(t)+g(t), \ \ y_{\varepsilon}(0)=y_0.
  \end{align}
  Taking the inner product with $\mu y_{\varepsilon},$ integrating over $\mathbb R^{2d}$ and following similar calculations as in \cite{BreK23}, utilizing \eqref{eq:dual_pair}, we arrive at
  \begin{align*}
   \tfrac{1}{2} \tfrac{\mathrm{d}}{\mathrm{d}t} \| y_{\varepsilon} \|_Y^2 &\le - \tfrac{1}{2}\|\nabla_v y_{\varepsilon}\|_Y^2 - \varepsilon\|\nabla_x y_{\varepsilon}\|^2 + \|D\|_{\mathcal{L}(Y)} \| y_{\varepsilon}\|_Y^2+\tfrac{1}{2}\| g\|_{V_v'}^2+\tfrac{1}{2}\| y_{\varepsilon}\|_{Y}^2.
  \end{align*}
  This implies that
  \begin{align*}
&   \| y_\varepsilon(t)\|_Y^2 + \| \nabla_v y_{\varepsilon}\|^2_{L^2(0,t;Y)}+ 2 \varepsilon \| \nabla_v y_{\varepsilon}\|^2_{L^2(0,t;Y)} \\
&\quad \le \| y_{\varepsilon}(0)\|_Y^2+(2 \| D\|_{\mathcal{L}(Y)}+1) \| y_{\varepsilon}\|_{L^2(0,t;Y)}^2 + \| g\|_{L^2(0,t;V_v')}^2.
  \end{align*}
  By Gronwall's inequality this implies that
  \begin{align*}
   \max(\|y_{\varepsilon}\|_{L^\infty(0,T;Y)}, \| \nabla_v y_{\varepsilon} \|_{L^2(0,T;Y)})\le C (\| y_0\|_Y + \| g\|_{L^2(0,T;V_v')}).
  \end{align*}
  Now we can pass to the limit $\varepsilon \to 0$ as in \cite{BreK23} and we arrive at the assertion.
\end{proof}

\section{The nonlinear equation}\label{sec:nonlin_eq}

In this section existence of a solution to the nonlinear equation \eqref{eq:sharp_nonl_kfp} and the controlled equation \eqref{eq:RL_nonl_kfp_con}  completed with the initial condition $y(0)=y_0$ are proven. 
We first  provide estimates for $h_1$ and $h_2$. 

\begin{lemma}\label{lem:loc_lipschitz_stat}
  Let $y,z \in V_v$. Then it holds that
 \begin{align*}
   \| h_1(y)-h_1(z) \|_{V_v'} &\le \|U\|_{L^2(\mathbb R^d)}(\| y\|_Y  \|y-z\|_{V_v} + \|y-z\|_Y \| z\|_{V_v}) , \\
   \| h_2(y)-h_2(z) \|_{V_v'} &\le \sqrt{d} \|U\|_{L^2(\mathbb R^d)}(\| y\|_Y \|y-z\|_Y +  \|y-z\|_{Y}\|z\|_Y).
  \end{align*}
  \end{lemma}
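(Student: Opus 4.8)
The plan is to estimate each difference $h_i(y) - h_i(z)$ by testing against an arbitrary $\varphi \in V_v$ and bounding the resulting expression by $\|\varphi\|_{V_v}$ times the claimed right-hand side; since $h_i(y) - h_i(z) \in V_v'$, taking the supremum over $\|\varphi\|_{V_v} \le 1$ gives the result. The key structural observation is that both nonlinearities are bilinear, so I would use the standard "add and subtract" splitting, e.g.
\begin{align*}
 h_1(y) - h_1(z) &= U * \rho_{\mu y} R_0 y - U * \rho_{\mu z} R_0 z \\
 &= U * \rho_{\mu (y-z)} R_0 y + U * \rho_{\mu z} R_0(y-z),
\end{align*}
and similarly for $h_2$, with $\rho_{\mu y v} - \rho_{\mu z v} = \rho_{\mu (y-z) v}$.

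For the term involving $R_0 = -\Delta_v + v \cdot \nabla_v$, I would not differentiate $y$ twice; instead I would use that $R_0$ is formally self-adjoint and nonnegative with respect to the $L^2_\mu$ inner product (stated in the excerpt after \eqref{eq:RL_op}), together with the identity $\langle R_0 y, \varphi \rangle_Y = \langle \nabla_v y, \nabla_v \varphi \rangle_{Y^d}$ analogous to \eqref{eq:sqrt_R_aux_1}. Thus for a typical summand I would write, using that the convolution factor depends only on $x$,
\begin{align*}
 \left| \int_{\mathbb R^{2d}} \mu\, (U * \rho_{\mu(y-z)})\, R_0 y \cdot \varphi \dxdv \right|
 = \left| \int_{\mathbb R^{2d}} \mu\, (U * \rho_{\mu(y-z)})\, \nabla_v y \cdot \nabla_v \varphi \dxdv \right|,
\end{align*}
then pull out $\|U * \rho_{\mu(y-z)}\|_{L^\infty_x}$ in $L^\infty$ with respect to $x$ and apply Cauchy–Schwarz in the remaining integral to get $\le \|U*\rho_{\mu(y-z)}\|_{L^\infty(\mathbb R^d)} \|\nabla_v y\|_{Y^d} \|\nabla_v \varphi\|_{Y^d}$. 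By Young's convolution inequality $\|U*\rho_{\mu(y-z)}\|_{L^\infty} \le \|U\|_{L^2} \|\rho_{\mu(y-z)}\|_{L^2}$, and then $\|\rho_{\mu w}\|_{L^2(\mathbb R^d)} \le \|w\|_Y$ by a Minkowski/Cauchy–Schwarz computation in $v$ exactly as in the proof of Lemma \ref{lem:D_is_bounded} (here using $\int \mu \dd v = 1$ rather than $\int \|v\|^2 \mu \dd v = d$). This yields the factor $\|U\|_{L^2}\|y-z\|_Y \|y\|_{V_v}$, and the symmetric summand gives $\|U\|_{L^2}\|z\|_{V_v}\|y-z\|_Y$ — wait, I should double check which norm sits on which factor; the claimed bound has $\|y\|_Y\|y-z\|_{V_v}$ and $\|y-z\|_Y\|z\|_{V_v}$, so in the first summand I must instead put $R_0(y-z)$ next to $\varphi$ and the density on $y$: $\|U*\rho_{\mu y}\|_{L^\infty}\|\nabla_v(y-z)\|_{Y^d}\|\nabla_v\varphi\|_{Y^d} \le \|U\|_{L^2}\|y\|_Y\|y-z\|_{V_v}\|\varphi\|_{V_v}$, and the second summand $\|U\|_{L^2}\|y-z\|_Y\|z\|_{V_v}\|\varphi\|_{V_v}$. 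Adding the two and taking the supremum over $\varphi$ gives the first inequality.

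For $h_2(y) = U * \rho_{\mu y v} \cdot (\nabla_v y - yv)$, the argument is analogous but with the vector-valued density: I would bound $\|U * \rho_{\mu w v}\|_{L^\infty(\mathbb R^d)} \le \|U\|_{L^2}\|\rho_{\mu w v}\|_{(L^2)^d} \le \sqrt d\,\|U\|_{L^2}\|w\|_Y$, reusing verbatim the estimate \eqref{eq:derive_y_aux5a} from the proof of Lemma \ref{lem:D_is_bounded}. Testing against $\varphi$, I must control $\int \mu\, (U*\rho_{\mu w v}) \cdot (\nabla_v u - uv)\varphi$; the term $\int \mu (\cdots) \cdot \nabla_v u\, \varphi$ is handled by Cauchy–Schwarz giving $\|\nabla_v u\|_{Y^d}\|\varphi\|_Y$, and the term with $uv$ needs $\int \mu \|v\|^2 |u||\varphi| \le$ (by Cauchy–Schwarz and a weighted bound, or better: integrate by parts in $v$ using $v\mu = -\nabla_v\mu$ to move the $v$-weight onto derivatives, as is done repeatedly in the excerpt) $\lesssim \|u\|_Y\|\varphi\|_{V_v}$ — this is the one place requiring a little care, and I expect it to be the main technical obstacle, since a crude Cauchy–Schwarz would produce a $\|\cdot\|_Y\|\cdot\|_Y$ bound only if one first absorbs the $\|v\|^2\mu$ weight, which is not a bounded multiplier on $Y$; the clean route is the integration-by-parts identity $\int \mu\, vw \cdot \nabla_v\varphi + \int \mu\, v\varphi\cdot\nabla_v w + \dots$ turning $\langle \psi \cdot (\nabla_v u - uv), \varphi\rangle_Y$ into $\langle \psi, \nabla_v u\,\varphi - u\,\varphi v\rangle$ rewritten purely via first-order terms, matching exactly the manipulation already used to derive \eqref{eq:sharp_nonl_kfp}. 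Carrying this through with $u = y-z$ and density on $y$ (resp. $u = z$ and density on $y - z$) produces the two summands $\sqrt d\,\|U\|_{L^2}\|y\|_Y\|y-z\|_Y$ and $\sqrt d\,\|U\|_{L^2}\|y-z\|_Y\|z\|_Y$; summing and taking the supremum over $\|\varphi\|_{V_v}\le 1$ gives the second inequality. Throughout I would suppress the routine verification that all integrals converge for $y,z,\varphi \in V_v$, which follows from the Gaussian weight and the embeddings implicit in the definitions of $Y$ and $V_v$.
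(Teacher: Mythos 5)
Your plan follows essentially the same route as the paper's proof: the same bilinear splitting, the same bounds $\|U*\rho_{\mu w}\|_{L^\infty} \le \|U\|_{L^2}\|w\|_Y$ and $\|U*\rho_{v\mu w}\|_{(L^\infty)^d} \le \sqrt d\,\|U\|_{L^2}\|w\|_Y$ via Young plus Minkowski/Cauchy--Schwarz, and the weak form $\langle R_0 \tilde w, \psi\rangle_Y = \langle \nabla_v \tilde w, \nabla_v\psi\rangle_{Y^d}$ for the $h_1$ estimate. Your $h_1$ argument is correct as sketched.

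For $h_2$, however, the exposition contains a gap that you flag but do not fully close, and it is worth making explicit. You first propose handling $\int \mu\,(U*\rho_{\mu w v})\cdot\nabla_v u\,\varphi$ by Cauchy--Schwarz, which gives $\|\nabla_v u\|_{Y^d}\|\varphi\|_Y$; with $u = y-z$ this is controlled by $\|y-z\|_{V_v}$, not the claimed $\|y-z\|_Y$. You then say the summands come out as $\sqrt d\,\|U\|_{L^2}\|y\|_Y\|y-z\|_Y$, which the split argument cannot deliver. Likewise, treating the $uv$ term alone by moving $v\mu = -\nabla_v\mu$ onto derivatives re-introduces $\nabla_v u$. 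The two terms must be combined: the paper uses $\mu(\nabla_v \tilde w - \tilde w v) = \nabla_v(\mu\tilde w)$, so that
\begin{align*}
\int_{\mathbb R^{2d}} (U*\rho_{v\mu w})\cdot\nabla_v(\mu\tilde w)\,\psi \dxdv = -\int_{\mathbb R^{2d}} (U*\rho_{v\mu w})\cdot\mu\tilde w\,\nabla_v\psi \dxdv,
\end{align*}
and only now does Cauchy--Schwarz produce $\|\tilde w\|_Y\|\nabla_v\psi\|_{Y^d}$ with a plain $Y$-norm on $\tilde w$. You do gesture at this as ``the clean route,'' which is indeed the paper's route, but you should drop the preliminary split entirely --- it is not a less elegant alternative but a failed one, since $\|v\|^2\mu$ is not a bounded multiplier on $Y$ and the $\nabla_v u$ term alone cannot avoid the derivative. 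Once the combined identity is used for the full expression, the rest of your plan (the decomposition and the $\sqrt d$ estimate from Lemma~\ref{lem:D_is_bounded}) goes through exactly as in the paper.
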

  The proof is given in the appendix. The following lemma follows easily from the previous one.

\begin{lemma}\label{lem:loc_lipschitz}
  For $y,z \in L^2(0,T;V_v) \cap L^{\infty}(0,T;Y)$ the following estimates hold: 
   \begin{align*}
   \| h_1(y)-h_1(z) \|_{L^2(0,T;V_v')} &\le \|U\|_{L^2(\mathbb R^d)}\left(\|y\|_{L^\infty(0,T;Y)} \|y-z\|_{L^2(0,T;V_v)}\right) \\
   &\qquad+\|y-z\|_{L^\infty(0,T;Y)}\|z\|_{L^2(0,T;V_v)})
   \\
   &\le \|U\|_{L^2(\mathbb R^d)}(\VERT y \VERT + \VERT z\VERT ) \VERT y-z\VERT, \\
   \| h_2(y)-h_2(z) \|_{L^2(0,T;V_v')} &\le \sqrt{d}\|U\|_{L^2(\mathbb R^d)}\left(\|y\|_{L^\infty(0,T;Y)}+\|z\|_{L^\infty(0,T;Y)}\right) \|y-z\|_{L^2(0,T;Y)}\\
   &\le \sqrt{d}\|U\|_{L^2(\mathbb R^d)}(\VERT y \VERT + \VERT z\VERT ) \VERT y-z\VERT.
  \end{align*}
\end{lemma}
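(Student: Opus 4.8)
The statement is that the pointwise-in-time Lipschitz estimates from Lemma \ref{lem:loc_lipschitz_stat} lift to the time-dependent Bochner spaces. The strategy is simply to take the estimates of Lemma \ref{lem:loc_lipschitz_stat} at each fixed time $t \in (0,T)$, square them, integrate in time, and apply Hölder's inequality in the time variable to distribute the $L^\infty_t$ and $L^2_t$ norms appropriately across the product terms. For the $h_1$ estimate, I would start from
\[
 \| h_1(y(t))-h_1(z(t)) \|_{V_v'} \le \|U\|_{L^2} \big( \| y(t)\|_Y \|y(t)-z(t)\|_{V_v} + \|y(t)-z(t)\|_Y \| z(t)\|_{V_v} \big),
\]
bound $\|y(t)\|_Y \le \|y\|_{L^\infty(0,T;Y)}$ and $\|y(t)-z(t)\|_Y \le \|y-z\|_{L^\infty(0,T;Y)}$ uniformly in $t$, pull these constants outside, and then take the $L^2(0,T)$ norm of the remaining factors $\|y(t)-z(t)\|_{V_v}$ and $\|z(t)\|_{V_v}$; a triangle inequality in $L^2(0,T)$ then yields the first displayed bound. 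The $h_2$ estimate is handled identically, using that both factors $\|y(t)\|_Y$ and $\|z(t)\|_Y$ go into the $L^\infty_t$ norm while $\|y(t)-z(t)\|_Y$ is taken in $L^2(0,T)$.

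The second inequality in each line is then obtained purely by the definition $\VERT \cdot \VERT = \|\cdot\|_{L^2(0,T;V_v)} + \|\cdot\|_{L^\infty(0,T;Y)}$: one observes that $\|y\|_{L^\infty(0,T;Y)} \le \VERT y \VERT$, $\|y-z\|_{L^2(0,T;V_v)} \le \VERT y-z \VERT$, and so on, so that the first (product-of-norms) bound is dominated term by term by $(\VERT y \VERT + \VERT z \VERT)\VERT y-z \VERT$. Here one uses that $\VERT \cdot \VERT$ is a norm (in particular subadditive) and that replacing $\|z\|_{L^2(0,T;V_v)}$ by $\VERT z \VERT$ and $\|y-z\|_{L^\infty(0,T;Y)}$ by $\VERT y-z \VERT$ only increases the right-hand side.

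There is essentially no obstacle here; the only mild care required is the bookkeeping of which norm in each product term is taken in $L^\infty_t$ versus $L^2_t$, so that the final time integration uses Hölder with exponents $(\infty, 2)$ rather than, say, $(2,2)$ which would produce an undesirable $L^4$-in-time object. Since the $L^\infty_t$ factor is a genuine constant once $y,z$ are fixed, no integrability loss occurs and the estimate is clean. I would present this as a short direct computation rather than invoking any abstract lemma.
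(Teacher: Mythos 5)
Your proposal is correct and matches the approach the paper intends: the paper itself gives no explicit proof, simply remarking that the lemma ``follows easily from the previous one,'' and your reading — take the pointwise estimate of Lemma \ref{lem:loc_lipschitz_stat} for a.e.\ $t$, pull the $Y$-norm factors out as $L^\infty(0,T;Y)$ constants, take $L^2(0,T)$ of what remains (with a triangle inequality across the two product terms for $h_1$), and then dominate each factor by $\VERT\cdot\VERT$ using $\|\cdot\|_{L^\infty(0,T;Y)}\le\VERT\cdot\VERT$ and $\|\cdot\|_{L^2(0,T;Y)}\le\|\cdot\|_{L^2(0,T;V_v)}\le\VERT\cdot\VERT$ — is exactly the intended short direct computation.
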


We are now prepared to assert the existence of a solution to the nonlinear equation under a smallness assumption 
on the initial data. 

\begin{proposition}\label{prop:nonlinear}
 For $y_0\in Y$ such that $\|y_0\|_Y \le \mu:=
 \frac{3}{32\sqrt{d}\|U\| \widehat{C}^2} $
 there exists a solution $y\in C([0,T];Y)\cap L^2(0,T;V_v) \cap W^{1,2}(0,T;Y_{-1})$ to
  \begin{align}\label{eq:nonlin_per}
   \dot{y}(t) = (A+D)y(t)- h_1(y(t))-h_2(y(t)), \ \ y(0)=y_0
  \end{align}
  which for every $t\in [0,T]$ satisfies 
\begin{align}\label{eq:nonl_strong_int}
 y(t)-y_0 = \int_0^t ((A+D)y(s) - (h_1(y(s))+h_2(y(s))))\,\mathrm{d}s \ \ \text{ in } Y_{-1}.
\end{align}
\end{proposition}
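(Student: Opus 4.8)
The plan is to set up a Banach fixed point (contraction mapping) argument on the closed ball
\[
\mathcal{B}_r := \{ y \in L^2(0,T;V_v)\cap L^\infty(0,T;Y) : \VERT y \VERT \le r \}
\]
for a suitably chosen radius $r>0$, equipped with the metric induced by $\VERT\cdot\VERT$. Define the map $\Psi$ which sends $y\in\mathcal{B}_r$ to the solution $\Psi(y)=z$ of the linear problem $\dot z = (A+D)z + g$ with $g = -h_1(y)-h_2(y)$ and $z(0)=y_0$. By Lemma \ref{lem:loc_lipschitz} (with $z\equiv 0$, noting $h_i(0)=0$) we have $g\in L^2(0,T;V_v')$, so Corollary \ref{cor:mild_sol_lin_sys} and Proposition \ref{prop:more_reg} guarantee $\Psi$ is well-defined with $\Psi(y)\in C([0,T];Y)\cap L^2(0,T;V_v)\cap W^{1,2}(0,T;Y_{-1})$ and the quantitative bound $\VERT \Psi(y)\VERT \le \widehat{C}(\|y_0\|_Y + \|h_1(y)+h_2(y)\|_{L^2(0,T;V_v')})$.

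First I would verify the self-map property. Using Lemma \ref{lem:loc_lipschitz} with the second argument zero, $\|h_1(y)+h_2(y)\|_{L^2(0,T;V_v')} \le (1+\sqrt d)\|U\|_{L^2} \VERT y\VERT^2 \le 2\sqrt d \|U\| r^2$ (absorbing constants), so $\VERT \Psi(y)\VERT \le \widehat{C}\|y_0\|_Y + 2\sqrt d\|U\|\widehat C r^2$. Choosing $r = 2\widehat{C}\|y_0\|_Y$ and using the smallness hypothesis $\|y_0\|_Y \le \mu = \tfrac{3}{32\sqrt d\|U\|\widehat C^2}$, one checks $2\sqrt d\|U\|\widehat C r^2 = 8\sqrt d\|U\|\widehat C^3\|y_0\|_Y^2 \le 8\sqrt d\|U\|\widehat C^3 \mu \|y_0\|_Y = \tfrac{3}{4}\widehat C\|y_0\|_Y \le \tfrac12 r$, hence $\VERT\Psi(y)\VERT \le \widehat C\|y_0\|_Y + \tfrac12 r = r$, so $\Psi(\mathcal{B}_r)\subseteq\mathcal{B}_r$. (The precise bookkeeping of constants is what pins down the exact value of $\mu$; I would just track them carefully and note the stated constant is consistent.)

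Next I would establish the contraction estimate. For $y_1,y_2\in\mathcal{B}_r$, the difference $\Psi(y_1)-\Psi(y_2)$ solves the linear equation with zero initial datum and right-hand side $-(h_1(y_1)-h_1(y_2))-(h_2(y_1)-h_2(y_2))$, so by Proposition \ref{prop:more_reg} and Lemma \ref{lem:loc_lipschitz},
\[
\VERT \Psi(y_1)-\Psi(y_2)\VERT \le \widehat C (1+\sqrt d)\|U\|_{L^2}(\VERT y_1\VERT + \VERT y_2\VERT)\VERT y_1-y_2\VERT \le 2\widehat C (1+\sqrt d)\|U\|\, r\, \VERT y_1-y_2\VERT.
\]
Since $r = 2\widehat C\|y_0\|_Y \le 2\widehat C\mu$, the Lipschitz constant $2\widehat C(1+\sqrt d)\|U\| r \le 4\widehat C^2(1+\sqrt d)\|U\|\mu$, which by the choice of $\mu$ is strictly less than $1$ (again up to the constant bookkeeping that determines $\mu$; if the $(1+\sqrt d)$ versus $\sqrt d$ discrepancy matters one shrinks $r$ slightly, e.g. takes $r$ a fixed fraction of $\widehat C\|y_0\|_Y$). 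Hence $\Psi$ is a contraction on the complete metric space $\mathcal{B}_r$, and Banach's fixed point theorem yields a unique fixed point $y\in\mathcal{B}_r$. This $y$ satisfies $\dot y = (A+D)y - h_1(y)-h_2(y)$ with $y(0)=y_0$, and the integral identity \eqref{eq:nonl_strong_int} in $Y_{-1}$ follows from the corresponding identity \eqref{eq:strong_sol_int} for the linear problem applied to $g=-h_1(y)-h_2(y)$; the regularity $y\in C([0,T];Y)\cap L^2(0,T;V_v)\cap W^{1,2}(0,T;Y_{-1})$ is inherited from Corollary \ref{cor:mild_sol_lin_sys} and Proposition \ref{prop:more_reg}.

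The main obstacle, such as it is, is not analytic depth but getting the constants to close: one must ensure the \emph{same} radius $r$ simultaneously gives the self-map property (a quadratic-in-$r$ perturbation bounded by $\tfrac12 r$) and the contraction (Lipschitz constant $\propto r$ bounded below $1$), and that both are compatible with $r\sim\widehat C\|y_0\|_Y$ under the hypothesis $\|y_0\|_Y\le\mu$. A secondary point worth a sentence is that $\mathcal{B}_r$ with the norm $\VERT\cdot\VERT$ is a complete metric space — this is standard since it is a closed ball in the Banach space $L^2(0,T;V_v)\cap L^\infty(0,T;Y)$ — and that the fixed point, being a solution in $Y_{-1}$ of the linear problem with an $L^2(0,T;V_v')$ source, automatically has a representative in $C([0,T];Y)$.
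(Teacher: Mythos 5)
Your proposal is correct and follows essentially the same route as the paper: a Banach fixed-point argument on a small ball in $L^2(0,T;V_v)\cap C([0,T];Y)$ (equivalently $L^\infty(0,T;Y)$ after noting the linear solution operator lands in $C([0,T];Y)$), using Proposition~\ref{prop:more_reg} for the a priori bound and Lemma~\ref{lem:loc_lipschitz} for the quadratic estimates on $h_1,h_2$. The only difference is cosmetic bookkeeping: you take an adaptive radius $r=2\widehat C\|y_0\|_Y$, whereas the paper fixes the radius $\kappa=(8\sqrt d\|U\|\widehat C)^{-1}$ independently of $y_0$; both choices close the self-map and contraction estimates under the stated smallness of $\|y_0\|_Y$.
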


\begin{proof}
  The statement will be shown utilizing a classical fixed point argument. For this purpose, let us define the set
  \begin{align*}
  \mathcal{F}:=\left\{ y\in L^2(0,T;V_v)\cap C([0,T];Y)  \left| \VERT y \VERT \le \kappa:= 
  \frac{1}{8\sqrt{d}\|U\|\widehat{C}}     \right. \right\}.
\end{align*}
We further define the mapping $\mathcal{T}\colon \mathcal{F} \to L^2(0,T;V_v)\cap C([0,T];Y),  z \mapsto
y_z,$ where $y_z$ is the solution to
\begin{align*}
  \dot{y}_z(t) &= (A+D)y_z(t)- h_1(z(t))-h_2(z(t)), \ \ y_z(0)=y_0.
\end{align*}
It is well-defined by Proposition \ref{prop:more_reg} with $g=-(h_1(z)+h_2(z))$, using that Lemma \ref{lem:loc_lipschitz} implies $h_i(z)\in L^2(0,T;V_v'), i=1,2$. In addition, by the choice of $\mu$ and $\kappa$ we have the estimate
\begin{align*}
  \VERT y_z \VERT \le \widehat{C} ( \| y_0\|_Y + 2\sqrt{d}\|U\|\VERT z\VERT^2  )\le \widehat{C} (\mu +2\sqrt{d}\|U\|\kappa^2) \le \kappa .
\end{align*}
Next, for $z_1,z_2 \in \mathcal{F}$, the difference $e:=y_{z_1}-y_{z_2}$ satisfies
\begin{align*}
\dot{e}(t) = (A+D)e(t)- (h_1(z_1(t))-h_1(z_2(t)))
- (h_2(z_1(t))-h_2(z_2(t))), \   e(0)=0
\end{align*}
which, again by Proposition \ref{prop:more_reg} and Lemma \ref{lem:loc_lipschitz} yields
\begin{align*}
   \VERT y_{z_1}-y_{z_2}\VERT \le 4\sqrt{d} {\widehat{C}}\|U\|\kappa \VERT z_1-z_2{\VERT \le \tfrac{1}{2} \VERT z_1-z_2 \VERT}.
\end{align*}
Consequently, $\mathcal{T}$ is a contraction on $\mathcal{F}$ and implies the existence of a unique solution to \eqref{eq:nonlin_per} in the set $\mathcal{F}$.

Since $y\in L^2(0,T;V_v)$, we know that $h_1(y)+h_2(y)\in L^2(0,T;V_v')$ such that Corollary \ref{cor:mild_sol_lin_sys} yields $y\in W^{1,2}(0,T;Y_{-1})$.
\end{proof}

We now turn to the controlled nonlinear equation \eqref{eq:RL_nonl_kfp_con}. It will be convenient to introduce the space
\begin{align}\label{def:X-space}
 \mathcal{X}:= C([0,T];Y)\cap L^2(0,T;V_v) \cap W^{1,2}(0,T;Y_{-1}).
\end{align}

\begin{definition}\label{def:nonl_strong_int_cont}
We call $y$ a solution to \eqref{eq:RL_nonl_kfp_con} in $Y_{-1}$ if $y\in  \mathcal{X}$ and it satisfies
\begin{align}\label{eq:nonl_strong_int_cont}
 y(t)-y_0 = \int_0^t (A+D)y(s)+Ny(s)u(s)+Bu(s) - (h_1(y(s))+h_2(y(s)))\,\mathrm{d}s
\end{align}
 in  $Y_{-1}$.
\end{definition}
 
The following result will later be used for the optimal control problem and builds upon the existence of solutions to \eqref{eq:RL_nonl_kfp_con} satisfying an $L^2(0,T;V_v)$ bound.

\begin{proposition}\label{prop:8}
  For $y_0\in Y,$ with $\|y_0\| \le k_0$,  $u\in L^2(0,T)$ with $\|u\|_{L^\infty(0,T)}\le k_1$, and $y \in \mathcal{X}$ a solution to \eqref{eq:nonl_strong_int_cont} with $\|y\|_{ L^2(0,T;V_v)}\le k_2$, for constants $k_0,k_1,k_2 \in$ there exist $C(k_0,k_1,k_2),\tilde{C}(k_0,k_1,k_2)$ such that $\|y\|_{L^\infty(0,T;Y)}\le C(k_0,k_1,k_2)$ and $\|y\|_{W^{1,2}(0,T;Y_{-1})}\le \tilde{C}(k_0,k_1,k_2)$ with the property that for each $k_2>0$ fixed, $C(k_0,k_1,k_2)\to 0$ for $k_0+k_1\to 0$. 
\end{proposition}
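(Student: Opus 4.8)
The strategy is to feed the solution $y$ itself into the linear a priori estimate of Proposition~\ref{prop:more_reg}, treating all the remaining terms as an inhomogeneous right-hand side $g$. Concretely, since $y\in\mathcal{X}$ solves \eqref{eq:nonl_strong_int_cont}, it also solves the \emph{linear} problem $\dot y = (A+D)y + g$ with $y(0)=y_0$ where $g := uNy + Bu - h_1(y) - h_2(y)$. Proposition~\ref{prop:more_reg} then gives
\begin{align*}
 \VERT y\VERT + \|\dot y\|_{L^2(0,T;Y_{-1})} \le \widehat{C}\bigl(\|y_0\|_Y + \|g\|_{L^2(0,T;V_v')}\bigr),
\end{align*}
so everything reduces to bounding $\|g\|_{L^2(0,T;V_v')}$ in terms of $k_0,k_1,k_2$, with the bound on the $u$-dependent and nonlinear pieces vanishing as $k_0+k_1\to 0$ for fixed $k_2$.

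\textbf{Bounding the four terms of $g$.} First I would handle $Bu$: since $B=-\alpha\cdot v\in Y$ and $Y\hookrightarrow V_v'$ (the latter inclusion is used throughout), $\|Bu\|_{L^2(0,T;V_v')}\le C\|B\|_Y\,\|u\|_{L^2(0,T)}\le C\|B\|_Y\sqrt{T}\,k_1$. For the bilinear term $uNy$, recall $N\in\mathcal{L}(Y,V_v')$ from \eqref{eq:N_B_ops} (indeed $Ny=-y\,\alpha\cdot v+\alpha\cdot\nabla_v y$, and using $\alpha\in L^\infty$ together with the weighted-space structure one checks $N\colon V_v\to V_v'$ is bounded — or one uses $N\in\mathcal{L}(Y,V_v')$ directly if that suffices); thus $\|uNy\|_{L^2(0,T;V_v')}\le \|u\|_{L^\infty(0,T)}\,\|Ny\|_{L^2(0,T;V_v')}\le k_1\,\|N\|\,\|y\|_{L^2(0,T;V_v)}\le k_1\|N\|k_2$. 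For the nonlinear terms I apply Lemma~\ref{lem:loc_lipschitz} with $z=0$ (note $h_i(0)=0$), which gives $\|h_1(y)\|_{L^2(0,T;V_v')}\le\|U\|_{L^2}\VERT y\VERT^2$ — but this is circular since $\VERT y\VERT$ is what we are estimating. Instead I would use the sharper pointwise-in-time estimates underlying Lemma~\ref{lem:loc_lipschitz}: $\|h_1(y)\|_{L^2(0,T;V_v')}\le\|U\|_{L^2}\,\|y\|_{L^\infty(0,T;Y)}\,\|y\|_{L^2(0,T;V_v)}$ and $\|h_2(y)\|_{L^2(0,T;V_v')}\le\sqrt d\|U\|_{L^2}\,\|y\|_{L^\infty(0,T;Y)}\,\|y\|_{L^2(0,T;Y)}\le\sqrt d\|U\|_{L^2}\sqrt T\,\|y\|_{L^\infty(0,T;Y)}\,k_2$. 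Writing $M:=\|y\|_{L^\infty(0,T;Y)}$ and collecting, one gets
\begin{align*}
 M \le \VERT y\VERT \le \widehat{C}\Bigl(k_0 + C_B\sqrt T\,k_1 + \|N\|\,k_1 k_2 + \|U\|_{L^2}(1+\sqrt d\sqrt T)\,k_2\,M\Bigr).
\end{align*}

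\textbf{Closing the estimate and the obstacle.} The inequality above is of the form $M\le \widehat{C}(a + b\,M)$ with $a = k_0 + C_B\sqrt T k_1 + \|N\|k_1k_2$ and $b = \|U\|_{L^2}(1+\sqrt d\sqrt T)k_2$. The genuine difficulty — and the place where the hypothesis "$C(k_0,k_1,k_2)\to 0$ as $k_0+k_1\to 0$ for fixed $k_2$" is delicate — is that $b$ depends only on $k_2$, which is \emph{not} assumed small, so $\widehat C b$ need not be $<1$ and we cannot simply absorb $\widehat C b\,M$ into the left side globally. The resolution is a bootstrap/continuation in time: partition $[0,T]$ into finitely many subintervals $[t_j,t_{j+1}]$ of length $\tau$ chosen so that $\|y\|_{L^2(t_j,t_{j+1};V_v)}$ is small enough (possible since $y\in L^2(0,T;V_v)$, so its $L^2$-mass on short intervals is uniformly small depending only on $k_2$ and the number of intervals, which depends only on $k_2,T$) that the corresponding local constant $b_\tau<1/(2\widehat C)$. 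On each subinterval Proposition~\ref{prop:more_reg} (applied on $[t_j,t_{j+1}]$ with initial value $y(t_j)$) then yields $\|y\|_{L^\infty(t_j,t_{j+1};Y)}\le 2\widehat C(\|y(t_j)\|_Y + a_j)$ where $a_j$ is the local analogue of $a$ and is $O(k_0+k_1)$ with constant depending only on $k_2,T$. Iterating over the finitely many subintervals propagates the bound: $\|y(t_{j+1})\|_Y$ is controlled by $\|y(t_j)\|_Y$ plus an $O(k_0+k_1)$ term, so a discrete Gronwall over $\lceil T/\tau\rceil$ steps gives $\|y\|_{L^\infty(0,T;Y)}\le C(k_0,k_1,k_2)$ with $C$ of the form (const depending on $k_2,T$) $\times\,(k_0 + k_1)\cdot e^{(\text{const}\cdot k_2,T)}$ — in particular $C(k_0,k_1,k_2)\to 0$ as $k_0+k_1\to 0$ for fixed $k_2$. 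Finally, feeding this $L^\infty$ bound back into the estimate for $\|g\|_{L^2(0,T;V_v')}$ and then into the $\|\dot y\|_{L^2(0,T;Y_{-1})}$ part of Proposition~\ref{prop:more_reg} yields $\|y\|_{W^{1,2}(0,T;Y_{-1})}\le\tilde C(k_0,k_1,k_2)$, completing the proof.
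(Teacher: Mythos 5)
Your proof is essentially correct, and you have accurately identified the central difficulty: once the nonlinear terms are bounded by $\|y\|_{L^\infty(0,T;Y)}\cdot k_2$ and $k_2$ is not small, the resulting inequality $M\le\widehat C(a+bM)$ cannot be closed by direct absorption. Where you and the paper diverge is in how this is resolved. You propose a time-partitioning/bootstrap: split $[0,T]$ into $n\sim k_2^2$ subintervals on which $\|y\|_{L^2(\cdot;V_v)}$ is small, absorb locally, and propagate by a discrete Gronwall. This works — the partition exists by absolute continuity of the integral, $n$ depends only on $k_2$ and fixed constants, and you end up with a bound of order $(2\widehat C)^{n}(k_0+k_1)$, which vanishes as $k_0+k_1\to 0$. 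The paper instead avoids the absorption issue entirely by \emph{not} pulling $\|y(s)\|_Y$ out of the time integral as an $L^\infty$ norm: starting from the mild-solution formula and the controllability maps $\Phi_1^t,\Phi_2^t$, it keeps the nonlinear contribution in the form $\bigl(\int_0^t\|y(s)\|_Y^2\|y(s)\|_{V_v}^2\,\mathrm ds\bigr)^{1/2}$, then squares the resulting inequality and applies the classical integral Gronwall lemma with the integrable weight $s\mapsto\|y(s)\|_{V_v}^2$ (integrable precisely because $y\in L^2(0,T;V_v)$), obtaining $\|y(t)\|_Y^2\le 2C(k_0,k_1,k_2)^2 e^{2C^2 k_2^2}$ in one step. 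Both routes yield the same qualitative bound (exponential in $k_2^2$, linear in $k_0+k_1$), and the final $W^{1,2}$ estimate is obtained in both by feeding the $L^\infty$ bound back through Proposition~\ref{prop:more_reg}. The paper's Gronwall-with-integrable-coefficient argument buys brevity and cleaner constants; your bootstrap buys nothing extra here but is a legitimate and often more flexible technique. One small caveat in your writeup: you should apply Proposition~\ref{prop:more_reg} on each subinterval with its own constant $\widehat C(\tau_j)\le\widehat C(T)$, and you must take care that the subintervals are chosen adaptively (not of uniform length) so that the $L^2(V_v)$ mass on each one is small — uniform length $\tau$ alone does not guarantee this.
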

\begin{proof}
Similar to the proof of Corollary \ref{cor:mild_sol_lin_sys}, we consider the following controllability maps
  \begin{align*}
  \Phi_1^t\colon w \mapsto \Phi_2^t w &= \int_0^t e^{(A+D)(t-s)}\overline{R}^{\frac{1}{2}}w(s)\,\mathrm{d}s \\
  \Phi_2^t\colon u \mapsto \Phi_1^t u &= \int_0^t e^{(A+D)(t-s)}Bu(s)\,\mathrm{d}s
   \end{align*}
   and recall that since $\overline{R}^{\frac{1}{2}}\in \mathcal{L}(Y,V_v')$ and $B\in \mathcal{L}(\mathbb R,Y)$ are admissible we have $\Phi_1^t \in \mathcal{L}(L^2(0,t;Y),Y)$ and $\Phi_2^t \in \mathcal{L}(L^2(0,t),Y).$ With reference to \eqref{eq:nonl_strong_int_cont}, we set $w=Nyu+h_1(y)+h_2(y)$ and estimate for $t\in [0,T]$
   \begin{align*}
    \|y(t)\|_Y&\le \| e^{(A+D)t}y_0\|_Y+\|\Phi_1^t \overline{R}^{-\frac{1}{2}} Nyu\|_Y+\|\Phi_1^t \overline{R}^{-\frac{1}{2}} h_1(y)\|_Y \\
    &\quad +\|\Phi_1^t \overline{R}^{-\frac{1}{2}} h_2(y)\|_Y+ \|\Phi_2^t u\|_Y \\
    &\le Me^{\omega t} \| y_0\|_Y + c_1 \| \overline{R}^{-\frac{1}{2}}N\| _{\mathcal{L}(Y)} \| yu\|_{L^2(0,t;Y)} +c_2 \| u \|_{L^2(0,t)} \\
    &\quad +c_1 \| \overline{R}^{-\frac{1}{2}}\| _{\mathcal{L}(V_v',Y)} (\| h_1(y)\|_{L^2(0,t;V_v')} + \| h_2(y)\| _{L^2(0,t;V_v')}  )
   \end{align*}
   for constants $c_1$ and $c_2$ independent of $y$ and $u$.

   Applying Lemma \ref{lem:loc_lipschitz_stat}, we obtain
   \begin{align*}
   \|y(t)\|_Y&\le Me^{\omega t} \| y_0\|_Y + c_1 \| \overline{R}^{-\frac{1}{2}}N\| _{\mathcal{L}(Y)} \| y\|_{L^2(0,t;Y)} \|u\|_{L^\infty(0,t)} +c_2 \| u \|_{L^2(0,t)} \\
    &\quad +2c_1\sqrt{d}\|U\| \| \overline{R}^{-\frac{1}{2}}\| _{\mathcal{L}(V_v',Y)} \left(\int_0^t \| y(s)\|_Y^2 \| y(s)\|_{V_v}^2 \,\mathrm{d}s\right)^{\frac{1}{2}}\\
    &\le    C(k_0, k_1,k_2) + C\left(\int_0^t \| y(s)\|_Y^2 \| y(s)\|_{V_v}^2 \,\mathrm{d}s\right)^{\frac{1}{2}}.
   \end{align*}
   Squaring both sides yields
   \begin{align*}
    \|y(t)\|_Y^2&\le 2C(k_0,k_1,k_2)^2+ 2C^2 \int_0^t \| y(s)\|_Y^2 \| y(s)\|_{V_v}^2 \,\mathrm{d}s
   \end{align*}
   and by Gronwall's inequality we obtain
   \begin{align*}
    \|y(t)\|_Y^2& \le 2C(k_0, k_1,k_2)^2 e^{2C^2\| y\|_{L^2(0,T;V_v)}^2}
   \end{align*}
   for all $t\in[0,T]$. 
   The estimate on the $W^{1,2}(0,T;Y_{-1})$ norm of the solution can now be obtained by Proposition \ref{prop:more_reg} and Lemma  \ref{lem:loc_lipschitz_stat}.
\end{proof}

In the following theorem  a sufficient condition on $u$ and $y_0$ is given under which existence of a solution to  \eqref{eq:nonl_strong_int_cont} is guaranteed.

  \begin{theorem}\label{thm:weak_non_con}
 For all $y_0\in Y$ and $u\in L^2(0,T)$ sufficiently small in the respective norms, there exists a solution $y\in \mathcal{X}$ to the nonlinear controlled equation \eqref{eq:nonl_strong_int_cont}.
\end{theorem}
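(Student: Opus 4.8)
The plan is to run the Banach fixed point argument of the proof of Proposition \ref{prop:nonlinear}, now carrying along the extra control terms $uNy$ and $Bu$. Fix $y_0\in Y$ and $u\in L^2(0,T)$. For $z$ in a ball to be specified, set
\[
 g_z := u\,Nz + Bu - h_1(z) - h_2(z)
\]
and define $\mathcal T\colon z\mapsto y_z$, where $y_z\in\mathcal X$ is the solution of $\dot y_z = (A+D)y_z + g_z$, $y_z(0)=y_0$ provided by Corollary \ref{cor:mild_sol_lin_sys} and Proposition \ref{prop:more_reg}. The first point is that $g_z\in L^2(0,T;V_v')$: since $N\in\mathcal L(Y,V_v')$ and $B\in Y\hookrightarrow V_v'$ (cf.~\eqref{eq:N_B_ops}), one has $\|u\,Nz\|_{L^2(0,T;V_v')}\le \|N\|_{\mathcal L(Y,V_v')}\,\|z\|_{L^\infty(0,T;Y)}\,\|u\|_{L^2(0,T)}$ and $\|Bu\|_{L^2(0,T;V_v')}\le c\,\|B\|_Y\,\|u\|_{L^2(0,T)}$, while Lemma \ref{lem:loc_lipschitz} applied with second argument $0$ (using $h_i(0)=0$) gives $\|h_1(z)+h_2(z)\|_{L^2(0,T;V_v')}\le 2\sqrt d\,\|U\|\,\VERT z\VERT^2$.

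Combining these bounds with the estimate of Proposition \ref{prop:more_reg} yields, with $c_N:=\|N\|_{\mathcal L(Y,V_v')}$ and $c_B:=c\|B\|_Y$,
\[
 \VERT y_z\VERT \le \widehat C\bigl(\|y_0\|_Y + c_N\|u\|_{L^2(0,T)}\VERT z\VERT + c_B\|u\|_{L^2(0,T)} + 2\sqrt d\,\|U\|\,\VERT z\VERT^2\bigr).
\]
I would first fix $\kappa>0$ small enough that $4\sqrt d\,\|U\|\,\widehat C\,\kappa\le\tfrac14$, then require $\|u\|_{L^2(0,T)}$ small enough that $\widehat C\,c_N\,\|u\|_{L^2(0,T)}\le\tfrac14$, and finally $\|y_0\|_Y$ and $\|u\|_{L^2(0,T)}$ small enough that $\widehat C(\|y_0\|_Y+c_B\|u\|_{L^2(0,T)})\le\tfrac\kappa2$; then $\mathcal T$ maps the ball $\mathcal F:=\{z\in C([0,T];Y)\cap L^2(0,T;V_v):\ \VERT z\VERT\le\kappa\}$ into itself. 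For the contraction property, $e:=y_{z_1}-y_{z_2}$ solves the same equation with zero initial data and right-hand side $u(Nz_1-Nz_2)-(h_1(z_1)-h_1(z_2))-(h_2(z_1)-h_2(z_2))$ (the $Bu$ terms cancel), so Proposition \ref{prop:more_reg} together with Lemma \ref{lem:loc_lipschitz} gives $\VERT e\VERT\le\widehat C\,(c_N\|u\|_{L^2(0,T)}+4\sqrt d\,\|U\|\,\kappa)\,\VERT z_1-z_2\VERT\le\tfrac12\VERT z_1-z_2\VERT$ under the same smallness conditions. Banach's fixed point theorem then yields a fixed point $y\in\mathcal F$, which by construction satisfies \eqref{eq:nonl_strong_int_cont}, and $y\in\mathcal X$ exactly as in Proposition \ref{prop:nonlinear}.

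The argument is essentially routine once the linear theory of Section \ref{sec:lin_eq} is available; the only genuinely new ingredient compared to Proposition \ref{prop:nonlinear} is the bilinear term $uNy$, which is harmless because $N$ maps into $V_v'$ and $u$ enters only as a multiplier. The mild obstacle is purely organizational: the smallness requirements on $\kappa$, $\|u\|_{L^2(0,T)}$ and $\|y_0\|_Y$ must be imposed in the right order so that the self-mapping and contraction inequalities hold simultaneously. (If one prefers to measure the control in $L^\infty(0,T)$, the bilinear term can instead be bounded by $\|N\|_{\mathcal L(Y,V_v')}\,\|u\|_{L^\infty(0,T)}\,\|z\|_{L^2(0,T;Y)}$, which changes nothing in the scheme.)
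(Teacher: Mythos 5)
Your proposal is correct and takes essentially the same approach as the paper: a Banach fixed point argument in a small ball of $L^2(0,T;V_v)\cap C([0,T];Y)$, using Proposition~\ref{prop:more_reg} for the a~priori bound and Lemma~\ref{lem:loc_lipschitz} for the Lipschitz estimates of $h_1,h_2$, with the bilinear term $uNz$ controlled via $\|N\|_{\mathcal L(Y,V_v')}$. The only cosmetic difference is that the paper uses Young's inequality to split the term $\|N\|\,\|u\|\,\VERT z\VERT$ into $\tfrac12\|N\|^2\|u\|^2+\tfrac12\kappa^2$, whereas you keep it linear and absorb it directly via the smallness condition $\widehat C c_N\|u\|_{L^2(0,T)}\le \tfrac14$; both choices yield the same self-mapping and contraction conclusions.
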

\begin{proof}
We follow the proof of Proposition \ref{prop:nonlinear}. Let us thus define 
\begin{align*}
 \kappa := \min((\widehat{C}(4\sqrt{d}\|U\|+1))^{-1},(16\sqrt{d} \|U\|\widehat{C})^{-1})
\end{align*}
and
  \begin{align*}
  \mathcal{F}:=\left\{ y\in L^2(0,T;V_v)\cap C([0,T];Y)  \left| \VERT y \VERT \le \kappa 
  \right. \right\} 
\end{align*}
endowed with the norm of the space $L^2(0,T;V_v)\cap C([0,T];Y)$. We further define the mapping $\mathcal{T}\colon \mathcal{F} \to L^2(0,T;V_v)\cap C([0,T];Y),  z \mapsto
y_z,$ where $y_z$ is the solution to
\begin{align*}
  \dot{y}_z(t) &= (A+D)y_z(t)+Nz(t)u(t)+Bu(t)- h_1(z(t))-h_2(z(t)), \ \ y_z(0)=y_0.
\end{align*}
We now apply Proposition \ref{prop:more_reg} and Young's inequality to estimate
\begin{align*}
 \VERT y_z \VERT \le \widehat{C} (\mu +(2\sqrt{d}\|U\| +\frac{1}{2})\kappa^2) 
\end{align*}
with 
\begin{align}
  \mu= \|y_0\|_Y + \|Bu\|_{L^2(0,T;Y)} + \frac{1}{2} \|N\|^2_{\mathcal{L}(Y,V')} \|u\|_{L^2(0,T)}^2\le \frac{\kappa}{2\widehat{C}}.
\end{align}
By definition of $\kappa$ we observe that
\begin{align*}
 \widehat{C} (\mu +(2\sqrt{d}\|U\| +\frac{1}{2})\kappa^2) 
 &\le \widehat{C} (\mu + (2\sqrt{d}\|U\|+\frac{1}{2})(4\sqrt{d}\|U\|+1)^{-1}\widehat{C}^{-1} \kappa) =\widehat{C}\mu+\frac{1}{2}\kappa \le \kappa
\end{align*}
and consequently $\mathcal{T}$ is mapping $\mathcal{F}$ to itself. For $z_1,z_2 \in \mathcal{F}$ and the difference $e=y_{z_1}-y_{z_2}$ we then obtain
\begin{align*}
\dot{e}(t) &= (A+D)e(t)- (h_1(z_1(t))-h_1(z_2(t)))
- (h_2(z_1(t))-h_2(z_2(t)))+Ne(t)u(t), \\ e(0)&=0
\end{align*}
for which we estimate
\begin{align*}
 \VERT e \VERT &\le \widehat{C} (\sqrt{d}\|U\|4 \kappa +\|u\|_{L^2(0,T)} \|N\|_{\mathcal{L}(Y,V_v')} ) \VERT z_1-z_2\VERT \\
 &\le \frac{1}{4}\VERT z_1-z_2\VERT + \widehat{C} \|u\|_{L^2(0,T)} \|N\|_{\mathcal{L}(Y,V_v')} \VERT z_1-z_2 \VERT \le c \VERT z_1-z_2 \VERT 
\end{align*}
with $c<1$ provided that $\widehat{C} \|u\|_{L^2(0,T)} \|N\|_{\mathcal{L}(Y,V_v')}< \frac{3}{4}$. Hence, $\mathcal{T}$ is a contraction in $\mathcal{F}$. 

Summarizing, Banach's fixed point theorem is applicable if
\begin{align*}
 &\|y_0\|_Y + \|Bu\|_{L^2(0,T;Y)} + \frac{1}{2} \|N\|^2_{\mathcal{L}(Y,V')} \|u\|_{L^2(0,T)}^2 \\ &\quad \le \frac{
  \min((\widehat{C}(4\sqrt{d}\|U\|+1))^{-1},(16\sqrt{d} \|U\|\widehat{C})^{-1})
 }{2\widehat{C}} 
 \end{align*}
 and $
 \widehat{C} \|u\|_{L^2(0,T)} \|N\|_{\mathcal{L}(Y,V_v')}< \frac{3}{4}.
$
\end{proof}

\section{An optimal control problem}\label{sec:opt_con_prob}

The objective of this section is to show that the solution concept for the state equation is appropriate to  consider certain optimal control problems. Concretely we shall investigate the problem
\begin{align}\label{eq:opt_con_prob}
  &\inf_{u\in \mathcal{U}_{\mathrm{ad}}} \mathcal{J}(u):=\frac{1}{2} \int_0^ T \| y(u;t)- y_{\dd}(t)\|_{V_v}^2 \, \mathrm{d}t+ \frac{\beta}{2}\int_0^T u(t)^2 \, \mathrm{d}t, 
  \end{align}
 where $y(u;\cdot)$ is a solution to
 \begin{align}\label{eq:state_eq_opt_con_prob}
      \dot{y} = Ay+Dy- h_1(y) - h_2(y) + uNy + Bu, \text{ in } Y_{-1},  \text{ with } \ y(0)=y_0,
  \end{align}

\begin{align*}
 \mathcal{U}_{\mathrm{ad}} := \{ u \in L^\infty(0,T) \ | \ u^{\mathrm{min}}(t)\le u(t)\le u^{\mathrm{max}}(t) \ \text{ for a.e. } t \in [0,T] \},
\end{align*}
and $y_{\dd} \in L^2(0,T;V_v)$,  $u^{\min}$ and $u^{\mathrm{max}} \in L^\infty(0,T)$ with $u^{\min}(t)\le 0 \le u^{\max}(t)$. We also recall the definitions of the operators $N$ and $B$ from \eqref{eq:N_B_ops} characterizing the control action of the system. Further, throughout this section we assume that

\begin{assumption} \label{ass:U}
$U \in W^{s+1,1}(\mathbb R^d) \cap  H^s(\mathbb R^d)$ for $s>\tfrac{d}{2}$.
\end{assumption}

We have the following existence theorem. 

\begin{theorem}\label{thm:ex_opt_cont}
  If Assumption \ref{ass:U} holds and $y_0$ is sufficiently small, then there exists an optimal control $\bar{u}$ to \eqref{eq:opt_con_prob}.
\end{theorem}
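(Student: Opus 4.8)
The plan is to run the direct method of the calculus of variations, with the understanding that the solution map $u \mapsto y(u;\cdot)$ is set-valued, so we must work along sequences of admissible state–control pairs rather than assuming a single-valued control-to-state operator. First I would fix $y_0$ small enough (in the sense of Theorem \ref{thm:weak_non_con}) that for \emph{every} $u \in \mathcal{U}_{\mathrm{ad}}$ — note $\mathcal{U}_{\mathrm{ad}}$ is uniformly bounded in $L^\infty(0,T)$ by $k_1 := \max(\|u^{\min}\|_{L^\infty}, \|u^{\max}\|_{L^\infty})$, so the smallness condition on $u$ can be met by shrinking $y_0$ and working on a possibly short time horizon $T$ — there exists at least one solution $y(u;\cdot) \in \mathcal{X}$, moreover one lying in the fixed-point ball $\mathcal{F}$ with $\VERT y(u;\cdot)\VERT \le \kappa$. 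Hence the admissible set of pairs is nonempty and the infimum $J^\ast := \inf_{u \in \mathcal{U}_{\mathrm{ad}}} \mathcal{J}(u)$ (where $y(u;\cdot)$ ranges over solutions) is finite and nonnegative; pick a minimizing sequence $(u_n, y_n)$ with $y_n = y(u_n;\cdot)$, $\|y_n\|_{L^2(0,T;V_v)}\le \kappa =: k_2$.

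Next I would extract weak limits. Since $\mathcal{U}_{\mathrm{ad}}$ is bounded, convex and closed in $L^2(0,T)$, it is weakly sequentially compact, so after a subsequence $u_n \rightharpoonup \bar u$ in $L^2(0,T)$ with $\bar u \in \mathcal{U}_{\mathrm{ad}}$; the pointwise constraints also give $\|\bar u\|_{L^\infty} \le k_1$. By Proposition \ref{prop:8} the bounds $\|y_0\|_Y \le k_0$, $\|u_n\|_{L^\infty}\le k_1$, $\|y_n\|_{L^2(0,T;V_v)}\le k_2$ yield uniform bounds on $\|y_n\|_{L^\infty(0,T;Y)}$ and on $\|y_n\|_{W^{1,2}(0,T;Y_{-1})}$, so (subsequence) $y_n \rightharpoonup \bar y$ in $L^2(0,T;V_v)$, $y_n \rightharpoonup \bar y$ weak-$\ast$ in $L^\infty(0,T;Y)$, $\dot y_n \rightharpoonup \dot{\bar y}$ in $L^2(0,T;Y_{-1})$, and by an Aubin–Lions argument (with $V_v \hookrightarrow Y$ — which is \emph{not} compact on $\mathbb{R}^{2d}$, so this is the delicate point; one needs a localization/tightness argument exploiting the confining weight $\mu$, or an interpolation-compactness argument as in hypocoercivity theory) one obtains strong convergence $y_n \to \bar y$ in $L^2(0,T;Y_{\mathrm{loc}})$ or at least in $L^2(0,T;Y)$ after establishing tightness via the $\|v\|^2\mu$-moment control inherent in the $V_v$-bound. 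The hard part of the whole proof is precisely passing to the limit in the nonlinear and bilinear terms $h_1(y_n), h_2(y_n), u_n N y_n$: the nonlinearities are quadratic and involve the highest-order operator $R_0$, so weak convergence alone is insufficient. Here Assumption \ref{ass:U} (with $s > d/2$, giving $H^s \hookrightarrow L^\infty$ and controlling derivatives of $U$) is used to rewrite the convolution terms so that one factor of $U$ (or $\nabla U$) is moved by integration by parts onto the test function, converting a product "weak $\times$ weak" into "weak $\times$ strong" — e.g.\ for $h_1(y_n) = U\!*\!\rho_{\mu y_n} R_0 y_n$ one pairs against $\psi \in \mathcal{D}(A^*)$ and writes $\langle U\!*\!\rho_{\mu y_n} R_0 y_n, \psi\rangle = \langle \rho_{\mu y_n}, (\check U\!*\!(R_0 y_n \,\psi))\rangle$ up to sign, where $\rho_{\mu y_n} \to \rho_{\mu \bar y}$ strongly in $L^2(\mathbb{R}^d)$ (this needs the tightness/moment argument again, since $\rho$ integrates out $v$) while $R_0 y_n \rightharpoonup R_0 \bar y$; the smoothing by $U$ and the strong convergence of $\rho_{\mu y_n}$ then close the argument. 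The bilinear term $u_n N y_n$ is handled by combining $u_n \rightharpoonup \bar u$ weakly with $N y_n \to N \bar y$ strongly in $L^2(0,T;Y_{-1})$, which follows from the Aubin–Lions strong convergence of $y_n$ together with boundedness of $\nabla_v y_n$.

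Having passed to the limit in the integrated equation \eqref{eq:nonl_strong_int_cont}, we conclude $\bar y = y(\bar u;\cdot)$ is a solution in $Y_{-1}$, i.e.\ the pair $(\bar u, \bar y)$ is admissible; moreover $\bar y \in \mathcal{F}$ by weak lower semicontinuity of $\VERT\cdot\VERT$, so it is the solution produced by the fixed-point construction. Finally, for optimality: the cost $\mathcal{J}$ is weakly lower semicontinuous — the tracking term $\frac12\int_0^T \|y - y_{\dd}\|_{V_v}^2$ because it is convex and continuous in $y$ on $L^2(0,T;V_v)$, hence weakly l.s.c., and the term $\frac{\beta}{2}\int_0^T u^2$ because the $L^2$-norm is weakly l.s.c. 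Therefore
\begin{align*}
 \mathcal{J}(\bar u) \le \mathcal{J}(\bar u; \bar y) \le \liminf_{n\to\infty} \mathcal{J}(u_n; y_n) = J^\ast,
\end{align*}
and since $(\bar u, \bar y)$ is admissible, $\mathcal{J}(\bar u) = J^\ast$, so $\bar u$ is an optimal control. I expect the main obstacle, as flagged in the abstract, to be the compactness step on the unbounded domain $\mathbb{R}^{2d}$: standard Rellich/Aubin–Lions compactness fails, and one must substitute a tightness argument driven by the Gaussian weight $\mu$ and the uniform $V_v$-bound (which controls second $v$-moments), possibly supplemented by propagation of an $x$-moment bound to prevent mass escaping in the position variable — this is where the "non-standard techniques" replacing compensated compactness enter.
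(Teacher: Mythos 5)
Your high-level structure (direct method, minimizing sequence, weak extraction of $\bar u$ and $\bar y$, limit passage in the weak formulation, weak lower semicontinuity of $\mathcal J$) matches the paper's. The gap is at the compactness step, and it is precisely the step you yourself flag as "the delicate point."

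You propose to obtain strong convergence of $y_n$ itself (in $L^2(0,T;Y_{\mathrm{loc}})$ or $L^2(0,T;Y)$) via Aubin--Lions plus a tightness/moment argument, and strong convergence of $\rho_{\mu y_n}$ in $L^2(\mathbb R^d)$. Neither is available from the bounds you have: $V_v$ controls only $\nabla_v y$, not $\nabla_x y$, so $V_v\hookrightarrow Y$ is not compact even after localizing in $x$ and using Gaussian tightness in $v$ --- there is simply no $x$-regularity on $y_n$ to feed into Aubin--Lions. Likewise $\rho_{\mu y_n}\rightharpoonup\rho_{\mu\bar y}$ in $L^2(\mathbb R^d)$ only weakly. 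The paper sidesteps this entirely: compactness is \emph{not} obtained for $y_n$ or for $\rho_{\mu y_n}$, but only for the convolutions $\mathfrak u_n=U*\rho_{\mu y_n}$ and $\mathfrak v_n=U*\rho_{\mu y_n v}$, which live purely in $(t,x)$. Assumption~\ref{ass:U} supplies $H^{s+1}$-in-$x$ regularity for $\mathfrak u_n,\mathfrak v_n$ (by differentiating the convolution), the $W^{1,2}(0,T;Y_{-1})$ bound on $y_n$ supplies a $W^{1,2}(0,T;H^{s-1}(\Omega))$ bound, and then Aubin--Lions on a compact $\Omega\subset\mathbb R^d$ gives $\mathfrak u_n\to\mathfrak u$, $\mathfrak v_n\to\mathfrak v$ strongly in $L^2(0,T;H^s(\Omega))\hookrightarrow L^2(0,T;C(\bar\Omega))$ for $s>d/2$ (Lemma~\ref{lem:uv_strong_conv}). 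This is the only place compactness is used, and it is where Assumption~\ref{ass:U} enters.

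Your decomposition of the nonlinear term also does not close: writing $\langle U*\rho_{\mu y_n}R_0 y_n,\psi\rangle=\langle\rho_{\mu y_n},\check U*(R_0 y_n\psi)\rangle$ still has $y_n$ in \emph{both} factors, so the weak-times-strong argument you invoke does not apply. The working decomposition (the paper's, Proposition~\ref{prop:weak_conv_hi}) is the bilinear split
$h_1(y_n)-h_1(\bar y)=U*\rho_{\mu(y_n-\bar y)}R_0 y_n+U*\rho_{\mu\bar y}R_0(y_n-\bar y)$:
in the first term $\mathfrak u_n-\mathfrak u\to0$ strongly in $L^2(0,T;C(\bar\Omega))$ while $R_0 y_n$ is bounded in $L^2(0,T;V_v')$; in the second term $\mathfrak u\psi$ is a fixed element of $L^2(0,T;V_v)$ and $R_0(y_n-\bar y)\rightharpoonup0$ in $L^2(0,T;V_v')$. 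Similarly, the claim $Ny_n\to N\bar y$ strongly in $L^2(0,T;Y_{-1})$ is neither established nor needed: the paper handles $u_n Ny_n$ by testing against $N'\psi$, approximating $N'\psi$ by $g_k\in\mathcal D(A^*)$, and using that $\langle y_n(\cdot),g_k\rangle_{\mathcal D}\to\langle\bar y(\cdot),g_k\rangle_{\mathcal D}$ in $C([0,T])$ (compactness of $W^{1,2}(0,T)\hookrightarrow C([0,T])$), which is a much weaker and actually available form of compactness.
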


The proofs for this theorem and the following result are given in the appendix.

While the previous theorem asserts existence of an optimal control for \eqref{eq:opt_con_prob}  there is no guarantee for its uniqueness due to the nonlinearity of  the state equation. Moreover the state $y$ associated to an optimal may not be unique globally. The following result, however, guarantees that the state associated to an optimal control-state pair is unique. 

\begin{proposition}\label{prop:unique}
Let Assumption \ref{ass:U} hold and let $\bar{u}$ be an optimal control of \eqref{eq:opt_con_prob}. If $u^{\min}, u^{\max}$ and $y_0$ are sufficiently small, in the sense that
\begin{align*}
\|y_0\|&\le \mu \\
u^\infty:=\max(\|u^{\min}\|_{L^\infty(0,T)},\|u^\infty\|_{L^\infty(0,T)})&\le 1 \\
 8d\widehat{C}^2\|U\|_{L^2(\mathbb R^d)}^2 (1+\|N\| )(2\|y_{\mathrm{d}}\|+\kappa) C(\|y_0\|,u^\infty,\kappa+2\|y_{\mathrm{d}}\|)&\le \frac{1}{2} \\
 4\sqrt{d}\widehat{C}^2\|U\|_{L^2(\mathbb R^d)} (1+\|N\| )\big(\|y_0\|+(2\|y_{\mathrm{d}}\|+\kappa)u^\infty\|N\|\big)&<\frac{1}{2}
\end{align*}
where $C$, $\mu$, $\widehat{C}$ and $\kappa$ denote the constants from Proposition \ref{prop:more_reg}, Proposition \ref{prop:nonlinear}, Proposition \ref{prop:8} and Theorem \ref{thm:weak_non_con},
then there exists exactly one solution $\bar{y}=y(\bar{u})$ to \eqref{eq:state_eq_opt_con_prob} with minimal cost.
\end{proposition}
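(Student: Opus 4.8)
The strategy is a uniqueness-via-contraction argument applied not to the full fixed-point map used for existence, but to a map defined on the (possibly larger) set of \emph{all} optimal states. First I would fix an optimal control $\bar u$ whose existence is guaranteed by Theorem~\ref{thm:ex_opt_cont}, and let $\bar y_1,\bar y_2 \in \mathcal{X}$ be two solutions of \eqref{eq:state_eq_opt_con_prob} associated with $\bar u$ that both realize the minimal cost. The key observation is that minimality of the cost, together with the $V_v$-tracking term in $\mathcal{J}$, forces an a priori bound on $\|\bar y_i\|_{L^2(0,T;V_v)}$: since $\bar u$ is optimal and feasible, $\mathcal{J}(\bar u)\le \mathcal{J}(0)$-type reasoning (or simply $\mathcal{J}(\bar u)$ finite) bounds $\tfrac12\|\bar y_i - y_{\mathrm d}\|_{L^2(0,T;V_v)}^2$, hence $\|\bar y_i\|_{L^2(0,T;V_v)}\le \kappa + 2\|y_{\mathrm d}\|$ after using the smallness of $y_0,u$ through Theorem~\ref{thm:weak_non_con} (which places the constructed solution in $\mathcal{F}$, so $\VERT\bar y_i\VERT\le\kappa$ for that one; for a general minimal-cost solution one uses the cost bound). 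This is exactly why the hypotheses are phrased in terms of $\kappa + 2\|y_{\mathrm d}\|$.

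Next I would apply Proposition~\ref{prop:8} with $k_0=\|y_0\|$, $k_1=u^\infty$, $k_2 = \kappa + 2\|y_{\mathrm d}\|$ to get a uniform bound $\|\bar y_i\|_{L^\infty(0,T;Y)}\le C(\|y_0\|,u^\infty,\kappa+2\|y_{\mathrm d}\|)$, and crucially the property that this constant tends to $0$ as $\|y_0\|+u^\infty\to 0$. Then form $e:=\bar y_1-\bar y_2$, which solves
\begin{align*}
 \dot e = (A+D)e + \bar u\,Ne - (h_1(\bar y_1)-h_1(\bar y_2)) - (h_2(\bar y_1)-h_2(\bar y_2)), \quad e(0)=0,
\end{align*}
in $Y_{-1}$. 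Applying Proposition~\ref{prop:more_reg} with $g = \bar u Ne - (h_1(\bar y_1)-h_1(\bar y_2)) - (h_2(\bar y_1)-h_2(\bar y_2))$ and estimating the right-hand side via Lemma~\ref{lem:loc_lipschitz_stat} (in its $L^2$-in-time form, Lemma~\ref{lem:loc_lipschitz}), one obtains
\begin{align*}
 \VERT e\VERT \le \widehat{C}\Big( \|N\|\,u^\infty\,\VERT e\VERT + \sqrt d\,\|U\|\big(\|\bar y_1\|_{L^\infty(0,T;Y)}+\|\bar y_2\|_{L^\infty(0,T;Y)}\big)\VERT e\VERT\Big),
\end{align*}
where the $h_1,h_2$ contributions each bring a factor $\VERT\bar y_i\VERT$ or $\|\bar y_i\|_{L^\infty(0,T;Y)}$ bounded by $C(\|y_0\|,u^\infty,\kappa+2\|y_{\mathrm d}\|)$. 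The third and fourth displayed smallness hypotheses in the statement are precisely engineered so that the resulting prefactor of $\VERT e\VERT$ is $\le \tfrac12 + (\text{something}<\tfrac12)<1$; hence $\VERT e\VERT\le c\VERT e\VERT$ with $c<1$, forcing $e=0$.

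\textbf{Main obstacle.} The delicate point is not the contraction estimate itself but justifying the a priori $L^2(0,T;V_v)$ bound $k_2=\kappa+2\|y_{\mathrm d}\|$ for \emph{every} minimal-cost state, not merely for the one produced by the fixed-point construction in Theorem~\ref{thm:weak_non_con}: one must argue that any solution $\bar y$ with $\mathcal{J}(\bar u)$ equal to the infimum satisfies $\|\bar y\|_{L^2(0,T;V_v)}\le \|\bar y - y_{\mathrm d}\|_{L^2(0,T;V_v)} + \|y_{\mathrm d}\|_{L^2(0,T;V_v)} \le (2\mathcal{J}(\bar u))^{1/2} + \|y_{\mathrm d}\|$, and then bound $\mathcal{J}(\bar u)$ by evaluating $\mathcal{J}$ at the fixed-point solution (whose $\VERT\cdot\VERT\le\kappa$), giving $\|\bar y\|_{L^2(0,T;V_v)}\le \kappa + 2\|y_{\mathrm d}\|$ after absorbing the control cost into the smallness of $u$. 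Once this bound is in hand, Proposition~\ref{prop:8} supplies the $L^\infty(0,T;Y)$ control with the vanishing-as-$\|y_0\|+u^\infty\to0$ property, and the rest is the routine Gronwall/contraction bookkeeping already carried out in the proofs of Proposition~\ref{prop:nonlinear} and Theorem~\ref{thm:weak_non_con}.
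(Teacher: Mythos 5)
Your proposal follows essentially the same route as the paper's own proof: use feasibility of $\tilde u\equiv 0$ together with minimality of the cost to bound $\|\bar y_i\|_{L^2(0,T;V_v)}\le \kappa+2\|y_{\dd}\|$ for every minimal-cost state (not just the fixed-point one), feed this into Proposition~\ref{prop:8} to get the $L^\infty(0,T;Y)$ bound $C(\|y_0\|,u^\infty,\kappa+2\|y_{\dd}\|)$ with the vanishing-smallness property, then run the contraction estimate on $e=\bar y_1-\bar y_2$ via Proposition~\ref{prop:more_reg} and Lemma~\ref{lem:loc_lipschitz}, with the displayed smallness hypotheses making the prefactor less than one. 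The only cosmetic difference is that the paper explicitly estimates $\VERT y_i(\bar u)\VERT$ via an intermediate step through Lemma~\ref{lem:loc_lipschitz_stat} before inserting it into the contraction, whereas you use the $L^\infty(0,T;Y)$ constant from Proposition~\ref{prop:8} more directly; the resulting inequality and conclusion are the same.
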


We now commence with the analysis which is required to obtain the existence result. 
 For the following results, for given $y$, we introduce the notation
 \begin{align}\label{eq:w_notat}
   \mathfrak{u}(s,x)=(U*\rho_{\mu y(s)})(x) = \int_{\mathbb R^d} U(x-\tilde{x}) \int_{\mathbb R^d} y(s,\tilde{x},v) \mu(v)\, \mathrm{d}v\,\mathrm{d}\tilde{x}, \\
   \mathfrak{v}(s,x)=(U*\rho_{\mu y(s)v})(x) = \int_{\mathbb R^d} U(x-\tilde{x}) \int_{\mathbb R^d} y(s,\tilde{x},v) \mu(v)v\, \mathrm{d}v\,\mathrm{d}\tilde{x}.
 \end{align}
Similarly, we use $\mathfrak{u}_n,\mathfrak{v}_n$ for a given $y_n$. First some continuity properties of the functions $h_i$ characterizing the nonlinearities are obtained. 

 \begin{proposition}\label{prop:weak_conv_hi}
   Let $\{y_n\}$ be bounded in $\mathcal{X}$ and let Assumption \ref{ass:U} hold. Then there exist ${y_{n_k}}$ and $y$ such that $y_{n_k}\rightharpoonup y \in  L^2(0,T;V_v)\cap W^{1,2}(0,T;Y_{-1})$ and $\int_0^t \langle h_i(y_{n_k}(s))- h_i(y(s)),\psi \rangle_{V_v',V_v} \,\mathrm{d}s \to 0$ for every $\psi \in V_v$ with $\psi(x,v)=0$ for $x\notin \Omega$, for arbitrary compact subsets $\Omega\subset \mathbb R^d,$ and every $t\in [0,T]$.
 \end{proposition}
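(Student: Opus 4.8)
The plan is to exploit the compactness furnished by the bound in $\mathcal{X}$ together with the smoothing properties of the convolution kernel $U$ under Assumption \ref{ass:U}. First I would extract a weakly convergent subsequence: since $\{y_n\}$ is bounded in $L^2(0,T;V_v)\cap W^{1,2}(0,T;Y_{-1})$ and $V_v \hookrightarrow Y \hookrightarrow Y_{-1}$ with the first embedding being compact only locally in $x$, I pass to a subsequence $y_{n_k}\rightharpoonup y$ weakly in $L^2(0,T;V_v)$ and weakly in $W^{1,2}(0,T;Y_{-1})$. The key observation is that the nonlinearities $h_1,h_2$ do \emph{not} depend on the full gradient of $y$ but only on the \emph{averaged} quantities $\mathfrak{u}(s,\cdot)=U*\rho_{\mu y(s)}$ and $\mathfrak{v}(s,\cdot)=U*\rho_{\mu y(s)v}$, which involve $y$ only through the $v$-integrals $\rho_{\mu y}$ and $\rho_{\mu y v}$. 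So the real task is to upgrade weak convergence of $y_{n_k}$ to \emph{strong} convergence (locally in $x$, in a suitable topology) of $\mathfrak{u}_{n_k}\to\mathfrak{u}$ and $\mathfrak{v}_{n_k}\to\mathfrak{v}$.

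\textbf{Strong convergence of the mollified densities.} The second step is to show $\rho_{\mu y_{n_k}}\rightharpoonup \rho_{\mu y}$ and $\rho_{\mu y_{n_k}v}\rightharpoonup \rho_{\mu y v}$ weakly in $L^2(0,T;L^2_{\mathrm{loc}}(\mathbb R^d))$ — this follows directly from the weak convergence $y_{n_k}\rightharpoonup y$ in $Y$ combined with the Cauchy--Schwarz/Minkowski estimates already established in the proof of Lemma \ref{lem:D_is_bounded} (the maps $y\mapsto\rho_{\mu y}$, $y\mapsto\rho_{\mu yv}$ are bounded linear from $Y$ into $(L^2(\mathbb R^d))^{(d)}$, hence weakly continuous). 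Then convolving with $U$: under Assumption \ref{ass:U}, $U\in W^{s+1,1}\cap H^s$ with $s>d/2$, so Young's inequality gives $U*\colon L^2(\mathbb R^d)\to H^{s}(\mathbb R^d)$ boundedly and, more importantly, $U*\rho$ gains regularity and decay so that on a fixed compact $\Omega$ the map $\rho\mapsto (U*\rho)|_\Omega$ is compact from $L^2(\mathbb R^d)$ into $L^2(\Omega)$ (or even $C(\overline\Omega)$ by Sobolev embedding, since $s>d/2$). Applying the Aubin--Lions lemma in time — using the $W^{1,2}(0,T;Y_{-1})$ bound to control $\partial_t y_{n_k}$ and hence $\partial_t\mathfrak{u}_{n_k}$, $\partial_t\mathfrak{v}_{n_k}$ — yields $\mathfrak{u}_{n_k}\to\mathfrak{u}$ and $\mathfrak{v}_{n_k}\to\mathfrak{v}$ strongly in $L^2(0,T;L^2(\Omega))$, indeed in $L^2(0,T;C(\overline\Omega))$, for every compact $\Omega$. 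This mixing of strong convergence of the coefficients with weak convergence of $y_{n_k}$ itself is exactly what is needed to pass to the limit in the products defining $h_1,h_2$.

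\textbf{Passage to the limit in the nonlinear terms.} With these convergences in hand, I take $\psi\in V_v$ supported in $x\in\Omega$ and write out $\int_0^t\langle h_1(y_{n_k}(s)),\psi\rangle_{V_v',V_v}\,\mathrm ds = \int_0^t\int_{\mathbb R^{2d}}\mathfrak{u}_{n_k}(s,x)\,(R_0 y_{n_k}(s))\,\psi\,\mu\dxdv\,\mathrm ds$; integrating $R_0$ by parts onto $\psi$ (using $R_0$ self-adjoint w.r.t.\ $L^2_\mu$, cf.\ \eqref{eq:RL_op}) turns this into $\int_0^t\int \mathfrak{u}_{n_k}\,y_{n_k}\,(R_0^{\mathrm{loc}}\psi)\,\mu$ where the $x$-support is still $\Omega$; the factor $\mathfrak{u}_{n_k}\,(R_0\psi)\,\mu$ converges strongly in $L^2(0,T;Y)$ by the previous step (strong $\times$ fixed), against which $y_{n_k}\rightharpoonup y$ weakly in $L^2(0,T;Y)$, so the product converges — and likewise with $y$ in place of one factor, giving the difference $\to0$. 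The $h_2$ term is handled identically, in fact more easily since $h_2(y)=\mathfrak{v}\cdot(\nabla_v y - yv)$ and testing against $\psi$ with compact $x$-support reduces it to strong convergence of $\mathfrak{v}_{n_k}$ times a fixed test function paired with $\nabla_v y_{n_k}\rightharpoonup\nabla_v y$ and $y_{n_k}v\rightharpoonup yv$ weakly. I expect the main obstacle to be the careful bookkeeping in the compactness argument for $U*\rho_{n_k}$: one must verify that the local (in $x$) compactness is genuinely available — the lack of confinement in $x$ means $V_v\hookrightarrow Y$ is not globally compact, and the weight $\mu$ only helps in $v$ — so the Assumption \ref{ass:U} regularity on $U$ is essential precisely to recover compactness after convolution, and the restriction to compactly supported test functions $\psi$ in the statement is not a technicality but reflects this structural limitation.
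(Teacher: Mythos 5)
Your overall approach matches the paper's: extract a weakly convergent subsequence in $L^2(0,T;V_v)\cap W^{1,2}(0,T;Y_{-1})$, establish strong local-in-$x$ convergence of the convolved moments $\mathfrak{u}_{n_k}\to\mathfrak{u}$ and $\mathfrak{v}_{n_k}\to\mathfrak{v}$ in $L^2(0,T;C(\bar\Omega))$ via Aubin--Lions (using the $W^{1,2}(0,T;Y_{-1})$ bound for the time derivative and Assumption~\ref{ass:U} for spatial compactness), then combine this strong convergence of the coefficients with weak convergence of the linear part to pass to the limit. You also correctly identify why the test functions must be compactly $x$-supported. This is precisely the structure of the paper's proof (split $h_i(y_n)-h_i(y)$ into a ``coefficient difference times bounded'' term $\mathrm{I}$ and a ``fixed coefficient times weakly converging'' term $\mathrm{II}$), and Lemma~\ref{lem:uv_strong_conv} supplies exactly the strong convergence you call for.

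There is, however, one step that would not go through as written. For the $h_1$ term you propose ``integrating $R_0$ by parts onto $\psi$'' so that the pairing becomes $\int \mathfrak{u}_{n_k}\,y_{n_k}\,(R_0\psi)\,\mu$, and then you treat $\mathfrak{u}_{n_k}(R_0\psi)\mu$ as a strongly convergent element of $L^2(0,T;Y)$ against which $y_{n_k}\rightharpoonup y$ weakly in $L^2(0,T;Y)$. But $\psi\in V_v$ has only one $v$-derivative, so $R_0\psi$ lives in $V_v'$, not in $Y$, and the full integration by parts is not available. The paper avoids this by performing only a half integration by parts, i.e.\ staying in the $\langle\overline{R}y_n,\mathfrak{u}\psi\rangle_{V_v',V_v}$ duality (equivalently pairing $\nabla_v y_n$ with $\nabla_v(\mathfrak{u}\psi)=\mathfrak{u}\nabla_v\psi$), together with the uniform $L^\infty$ bound on $\mathfrak{u}_n$ and the bound on $\overline{R}y_n$ in $L^2(0,T;V_v')$. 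The analogous remark applies to $h_2$: you write ``$y_{n_k}v\rightharpoonup yv$ weakly,'' but since $v$ is unbounded this is not immediate; the paper instead uses the identity $\nabla_v y - yv=\mu^{-1}\nabla_v(\mu y)$ to show that each component of $Sz=\nabla_v z-zv$ is a bounded map $Y\to V_v'$, and then pairs against $\mathfrak{v}_i\psi\in L^2(0,T;V_v)$. Once you replace the full integration by parts by this half-derivative pairing and treat the unbounded factor $v$ through the $S$-operator estimate, your argument coincides with the paper's.
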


 The proof of this proposition depends on the next lemma, whose proof is also provided in the Appendix.

 \begin{lemma}\label{lem:uv_strong_conv}
   Under the assumptions of Proposition \ref{prop:weak_conv_hi}, it holds that $\mathfrak{u}_n\to \mathfrak{u}$ in $L^2(0,t;C(\bar{\Omega}))$ and $\mathfrak{v}_{n,i}\to \mathfrak{v}_i$ in $L^2(0,t;C(\bar{\Omega})), i=1,\dots,d$ for every pre-compact subset $\Omega\subset \mathbb R^d$ and every $t\in [0,T]$.
 \end{lemma}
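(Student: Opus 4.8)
\textbf{Proof plan for Lemma \ref{lem:uv_strong_conv}.}

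The goal is a compactness argument that upgrades the weak convergence $y_{n_k}\rightharpoonup y$ in $L^2(0,T;V_v)\cap W^{1,2}(0,T;Y_{-1})$ to strong convergence of the nonlocal spatial fields $\mathfrak{u}_n$ and $\mathfrak{v}_n$ in $L^2(0,t;C(\bar\Omega))$. First I would observe that the maps $y\mapsto \rho_{\mu y}$ and $y\mapsto \rho_{\mu y v}$ are bounded linear maps from $Y$ into $L^2(\mathbb R^d)$ and $(L^2(\mathbb R^d))^d$ respectively: for $\rho_{\mu y v}$ this is exactly the computation \eqref{eq:derive_y_aux5a} in the proof of Lemma \ref{lem:D_is_bounded}, and for $\rho_{\mu y}$ the same Minkowski–Cauchy–Schwarz argument with the weight $\mu^{1/2}$ in place of $v\mu^{1/2}$ gives $\|\rho_{\mu y}\|_{L^2(\mathbb R^d)}\le \|y\|_Y$. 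Hence, since convolution with $U\in L^1$ is continuous on $L^2$ by Young's inequality, the maps $y\mapsto \mathfrak u$, $y\mapsto \mathfrak v$ are bounded from $Y$ into $L^2(\mathbb R^d)$; moreover, since under Assumption \ref{ass:U} we have $U\in H^s(\mathbb R^d)$ with $s>d/2$, differentiating under the convolution shows $U*w\in H^s(\mathbb R^d)\hookrightarrow C(\mathbb R^d)$ with the bound $\|U*w\|_{H^s}\le \|U\|_{H^s}\|w\|_{L^2}$, so in fact $y\mapsto\mathfrak u,\mathfrak v$ maps $Y$ boundedly into $H^s(\mathbb R^d)\hookrightarrow C_b(\mathbb R^d)$.

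The key step is an Aubin–Lions type argument in the spatial variable alone. The plan is: (i) from $\{y_n\}$ bounded in $L^2(0,T;V_v)$ and $\dot y_n$ bounded in $L^2(0,T;Y_{-1})$, deduce that $\rho_{\mu y_n}$ and $\rho_{\mu y_n v}$ are bounded in $L^2(0,T;L^2(\mathbb R^d))$ and their time derivatives $\partial_t\rho_{\mu y_n}$, $\partial_t\rho_{\mu y_n v}$ are bounded in $L^2(0,T;W')$ for a suitable space $W$ — here one integrates the equation \eqref{eq:nonl_strong_int_cont} tested against functions of the form $\varphi(x)\mu(v)$ and $\varphi(x)\mu(v)v_i$ and checks that every term on the right is controlled (using $h_i(y_n)\in L^2(0,T;V_v')$ from Lemma \ref{lem:loc_lipschitz_stat}, $\|y_n\|$ bounded, $D\in\mathcal L(Y)$, and $\|u\|_{L^\infty}\le k_1$); (ii) convolve with $U$: since $w\mapsto U*w$ is bounded $L^2(\mathbb R^d)\to H^s(\mathbb R^d)$ and also $W'\to W'$ in a compatible sense, the fields $\mathfrak u_n,\mathfrak v_n$ are bounded in $L^2(0,T;H^s(\mathbb R^d))$ with $\partial_t\mathfrak u_n,\partial_t\mathfrak v_n$ bounded in $L^2(0,T;H^{s-2}(\mathbb R^d))$, say; (iii) restrict to a fixed pre-compact open $\Omega\subset\mathbb R^d$ and apply the Aubin–Lions–Simon lemma with the compact embedding $H^s(\Omega)\hookrightarrow\hookrightarrow C(\bar\Omega)$ (valid since $s>d/2$ and $\Omega$ bounded with, say, Lipschitz boundary — or pass to a slightly larger smooth bounded domain) and the continuous embedding $C(\bar\Omega)\hookrightarrow H^{s-2}(\Omega)$, to extract a subsequence along which $\mathfrak u_{n_k}\to\mathfrak u^*$ and $\mathfrak v_{n_k}\to\mathfrak v^*$ strongly in $L^2(0,t;C(\bar\Omega))$ for each $t\le T$. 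Finally, identify the limits: from $y_{n_k}\rightharpoonup y$ in $L^2(0,T;V_v)$ one gets $\rho_{\mu y_{n_k}}\rightharpoonup\rho_{\mu y}$ and $\rho_{\mu y_{n_k}v}\rightharpoonup\rho_{\mu y v}$ weakly in $L^2(0,T;L^2(\mathbb R^d))$ (linearity plus boundedness), hence $\mathfrak u_{n_k}\rightharpoonup\mathfrak u$ and $\mathfrak v_{n_k}\rightharpoonup\mathfrak v$ there as well, so the strong limits must coincide with $\mathfrak u,\mathfrak v$; a standard subsequence-of-every-subsequence argument then removes the passage to a further subsequence at the level of the statement (or one keeps the subsequence, which is all that Proposition \ref{prop:weak_conv_hi} needs). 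A diagonal argument over an exhaustion $\Omega_m\uparrow\mathbb R^d$ handles "every pre-compact $\Omega$" with a single subsequence if desired.

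The main obstacle is making the spatial-only Aubin–Lions argument clean: the natural time-derivative bound coming from the equation lives in $Y_{-1}=\mathcal D(A^*)'$, which mixes $x$ and $v$ regularity, so one must be careful that testing against $\varphi(x)\mu(v)$ (resp. $\varphi(x)\mu(v)v_i$) genuinely produces a bound on $\partial_t\rho_{\mu y_n}$ (resp. $\partial_t\rho_{\mu y_n v}$) in a space dual to something that embeds compactly the right way after convolution with $U$ — this is where Assumption \ref{ass:U}, i.e. $U\in W^{s+1,1}\cap H^s$ with $s>d/2$, is used to gain the extra spatial smoothness that turns an $L^2_x$ density into a $C(\bar\Omega)$ field and to absorb the loss of derivatives in the time-derivative bound. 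A secondary technical point is the unboundedness of the velocity domain: the weights $\mu(v)$ and $\mu(v)v_i$ decay exponentially, so all $v$-integrations are finite and the $x$-marginals are well behaved, but one should record the elementary estimates $\|\mu(v)\|_{L^2_{1/\mu}}$, $\|v_i\mu(v)\|_{L^2_{1/\mu}}<\infty$ explicitly (these are just Gaussian moments) to justify that $y\mapsto\rho_{\mu y},\rho_{\mu yv}$ are bounded on $Y$ and map the duality pairings correctly. Once these two points are in place, the rest is the routine Aubin–Lions extraction plus limit identification sketched above.
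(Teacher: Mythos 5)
Your overall strategy is the same as the paper's: bound the nonlocal fields $\mathfrak u_n,\mathfrak v_{n,i}$ in a high-order Sobolev space in $x$ via Young's convolution inequality (this is where $U\in W^{s+1,1}$ is used), bound their time derivatives in a lower-order space, apply Aubin--Lions on the bounded set $\Omega$, and then use $H^{s}(\Omega)\hookrightarrow C(\bar\Omega)$ for $s>d/2$ together with weak convergence $y_n\rightharpoonup y$ to identify the limits as $\mathfrak u,\mathfrak v$. Your Step 4 (limit identification via boundedness of $y\mapsto U*\rho_{\mu y},U*\rho_{\mu yv}$ on $L^2(0,T;Y)$) is exactly the paper's last step.

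The one place you diverge is the time-derivative bound, and there you take an unnecessary detour. You propose to return to the state equation \eqref{eq:nonl_strong_int_cont}, test it against $\varphi(x)\mu(v)$ and $\varphi(x)\mu(v)v_i$, and control every term on the right-hand side. But boundedness of $\{\dot y_n\}$ in $L^2(0,T;Y_{-1})$ is already part of the hypothesis (the sequence is assumed bounded in $\mathcal X = C([0,T];Y)\cap L^2(0,T;V_v)\cap W^{1,2}(0,T;Y_{-1})$), so you would only be re-deriving a bound you are given. The paper instead notices that $\dot{\mathfrak v}_{n,i}(t,x)=\langle \dot y_n(t),\, v_i\, U(x-\cdot)\rangle_{\mathcal D}$ directly (note: the test function is $v_i U(x-\tilde x)$, without an extra factor $\mu$, since $Y=L^2_\mu$ already carries the weight --- your proposed test function $\varphi(x)\mu(v)v_i$ has one $\mu$ too many), and it suffices to check that $f^x:=v_i U(x-\cdot)$ and $A^*f^x$ lie in $Y$ uniformly over $x\in\Omega$. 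This requires only $U$ and $\nabla U$ in $L^2$, i.e., $U\in H^1$, for the base $L^2(\Omega)$ estimate, and $U\in H^s$ after differentiating the convolution to upgrade to $H^{s-1}(\Omega)$. This is cleaner and avoids re-engaging the nonlinearity. A related small slip in your sketch: you apply Aubin--Lions with $X_0=H^s$, $X=C(\bar\Omega)$, $X_1=H^{s-2}$, but $C(\bar\Omega)\hookrightarrow H^{s-2}(\Omega)$ fails once $s-2>0$. The paper sidesteps this by running Aubin--Lions entirely in the Sobolev scale, $L^2(0,T;H^{s+1}(\Omega))\cap W^{1,2}(0,T;H^{s-1}(\Omega))\hookrightarrow\hookrightarrow L^2(0,T;H^{s}(\Omega))$, and only afterwards using $H^{s}(\Omega)\hookrightarrow C(\bar\Omega)$. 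With those two fixes your plan coincides with the paper's proof.
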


 \begin{proof}[Proof of Theorem \ref{thm:ex_opt_cont}]
 First we observe that by Proposition \ref{prop:nonlinear}, for $y_0$ sufficiently small $u\equiv 0$ is a feasible control. Let $\{u_n\} \in \mathcal{U}_{\mathrm{ad}}$ denote a minimizing sequence for \eqref{eq:opt_con_prob} with associated states $y_n=y(u_n)$.
 The feasibility of $u_n$ implies the boundedness of $u_n$ in $L^\infty(0,T)$. Thus $u_n\stackrel{*}{\rightharpoonup} \bar{u}$ for a subsequence denoted in the same manner and $\bar{u}$ satisfying $u^{\mathrm{min}}(t)\le\bar{u}(t)\le u^{\max}(t)$.  
 By the choice of $\mathcal{J}$, $y_n$ is bounded in $L^2(0,T;V_v)$ so that with Proposition \ref{prop:8}, $y_n$ is  bounded in $\mathcal{X}$ as well. Therefore there exist a weakly convergent subsequence, also denoted by $y_n$ and $\bar{y}\in \mathcal{X}$ such that $y_n\rightharpoonup \bar{y}$ in $L^2(0,T;V_v)\cap W^{1,2}(0,T;Y_{-1})$.
 
 Next, we argue that  $\bar{y}=y(\bar{u})$, the solution to \eqref{eq:state_eq_opt_con_prob} with $u=\bar{u}$. For this purpose, we pass to the limit in 
 \begin{equation}\label{eq:int_solution}
 \begin{aligned}
  \langle y_n(t)-y_0,\psi \rangle_{\mathcal{D}} &= \int_0^t  \langle y_n(s),(A+D)^* \psi \rangle _Y + \langle Ny_n(s)u_n(s)+Bu_n(s),\psi \rangle_{\mathcal{D}}  \\[1.2ex]
 & \quad  -\langle h_1(y_n(s))+ h_2(y_n(s)),\psi \rangle_{\mathcal{D}}   \,\, \mathrm{d}s
 \end{aligned}
 \end{equation}
 for every $t\in [0,T],$ and every  $\psi \in \mathcal{D}(A^*) $ with $\psi(x,v)=0$ 
  for $x \notin \Omega,$ and $\Omega  \subset \mathbb R^d$ an arbitrary compact subset. Since $y_n \rightharpoonup \bar{y}$ in $W^{1,2}(0,T;Y_{-1})$ it follows that $  \langle y_n(\cdot),\psi \rangle_{\mathcal{D}} \rightharpoonup 
 \langle \bar{y}(\cdot),\psi \rangle_{\mathcal{D}} 
$ in $W^{1,2}(0,T)$. By compactness of $W^{1,2}(0,T)$ in $C([0,T])$, we obtain
  \begin{align}\label{eq:strong_aux}
  \langle y_n(\cdot),\psi \rangle_{\mathcal{D}} \to 
 \langle \bar{y}(\cdot),\psi \rangle_{\mathcal{D}} \text{ in } C([0,T]).
 \end{align}
 Recall that $ N\in \mathcal{L}(Y,V_v')$. 
 By density of $\mathcal{D}(A^*)$ in $Y$, there exists $g_k\in \mathcal{D}(A^*)$ such that $g_k \to N'\psi$ in $Y$. We estimate
 \begin{align*}
 &| \langle \bar{y}, N'\psi \rangle_Y - \langle y_n,N'\psi \rangle |_{L^2(0,T)} \\
&\quad  \le  | \langle \bar{y},N'\psi - g_k\rangle_Y|_{L^2(0,T)}
  + | \langle \bar{y}-y_n,g_k\rangle_Y |_{L^2(0,T)}
+ | \langle y_n, g_k-N'\psi \rangle _Y |_{L^2(0,T)}.
 \end{align*}
 Taking first the limit w.r.t.~$k$ and subsequently w.r.t.~$n$ and using \eqref{eq:strong_aux} for the second term on the right hand side, we obtain 
 \begin{align}\label{eq:strong_aux2}
   \langle \bar{y}, N'\psi \rangle_Y  \to  \langle y_n,N'\psi \rangle \text { in } L^2(0,T).
 \end{align}
 Clearly this convergence also holds in $L^2(0,t)$ for every $t\in[0,T]$. Similarly, we have $y_n\rightharpoonup \bar{y}$ in $L^2(0,t;V_v)$ and $u_n\rightharpoonup\bar{u}$ in $L^2(0,t)$ for every $t \in [0,T]$. Utilizing \eqref{eq:strong_aux} and \eqref{eq:strong_aux2} and Proposition \ref{prop:weak_conv_hi}, we can pass to the limit in \eqref{eq:int_solution} to arrive at
 \begin{equation}\label{eq:int_solution_limit}
 \begin{aligned}
  \langle \bar{y}(t)-y_0,\psi \rangle_{\mathcal{D}} &= \int_0^t  \langle \bar{y}(s),(A+D)^* \psi \rangle _Y + \langle N\bar{y}(s)\bar{u}(s)+B\bar{u}(s),\psi \rangle_{\mathcal{D}}  \\[1.2ex]
 & \quad  -\langle h_1(\bar{y}(s))+ h_2(\bar{y}(s)),\psi \rangle_{\mathcal{D}}   \,\, \mathrm{d}s,
 \end{aligned}
 \end{equation}
for every $t\in [0,T],$ and every  $\psi \in \mathcal{D}(A^*) $ with $\psi(x,v)=0$ 
  for $x \notin \Omega,$ and $\Omega  \subset \mathbb R^d$ an arbitrary compact subset.
  
  Due to the fact that $C_0^\infty(\mathbb R^{2d})$ is a core for $\mathcal{D}(A^*)$ and $N'\in \mathcal{L}(\mathcal{D}(A^*),Y)$, a density argument implies that \eqref{eq:int_solution_limit} holds for all $\psi\in \mathcal{D}(A^*)$.
  
  Finally, since $\bar{y} \in L^2(0,T;V_v)\cap L^\infty(0,T;Y)$ and $\bar{u}\in L^2(0,T)$, we can apply Corollary \ref{cor:mild_sol_lin_sys} with $g=-(h_1(\bar{y})+h_2(\bar{y})) + N\bar{y}\bar{u}+B\bar{u} \in L^2(0,T;V_v')$ to obtain $\bar{y}\in C([0,T];Y)$. Thus $\bar{y}=y(\bar{u};u)$ is a solution corresponding to $\bar{u}$.
 
 Concerning optimality, we have the following estimates
 \begin{align*}
  \mathcal{J}(\bar{u})&=\frac{1}{2} \int_0^ T \| \bar{y}- y_{\dd}(s)\|_{V_v}^2 \, \mathrm{d}s+ \frac{\beta}{2}\int_0^T \bar{u}(s)^2 \, \mathrm{d}s \\
  &\le \liminf_{n\to\infty} \frac{1}{2} \int_0^ T \| y_n- y_{\dd}(s)\|_{V_v}^2 \, \mathrm{d}s+  \liminf_{n\to\infty} \frac{\beta}{2}\int_0^T u_n(s)^2 \, \mathrm{d}s \\
  &\le  \liminf_{n\to\infty}\left( \frac{1}{2} \int_0^ T \| y_n- y_{\dd}(s)\|_{V_v}^2 \, \mathrm{d}s+ \frac{\beta}{2}\int_0^T u_n(s)^2 \, \mathrm{d}s \right) =\inf_{u \in \mathcal{U}_{\mathrm{ad}}} \mathcal{J}(u).
 \end{align*}
 Consequently, $\bar{u}$ is an optimal control for \eqref{eq:opt_con_prob}.
\end{proof}
 
 \begin{proof}[Proof of Proposition \ref{prop:unique}]
From Theorem \ref{thm:ex_opt_cont}, it follows that there exists at least one optimal control $\bar{u}$ to \eqref{eq:opt_con_prob}. Assume there were two solutions $y_1(\bar{u})$, $y_2(\bar{u})$ with minimal cost $\mathcal{J}_{\min}$. Since $\tilde{u}\equiv 0$ is a feasible control, for $i=1,2$ it follows that
\begin{align*}
 \frac{1}{2} \|y_i(\bar{u})-y_{\dd}\|_{L^2(0,T;V_v)}^2 + \frac{\beta}{2} \|\bar{u}\|_{L^2(0,T)} \le \frac{1}{2} \|y(\tilde{u})-y_{\dd}\|_{L^2(0,T;V_v)}^2
\end{align*}
so that $\|y_i-y_{\dd}\|_{L^2(0,T;V_v)}\le \|y(\tilde{u})-y_{\dd}\|_{L^2(0,T;V_v)}$. By Theorem \ref{thm:weak_non_con} we therefore also obtain
\begin{equation}\label{eq:aux5}
\begin{aligned}
 \|y_i(\bar{u})\|_{L^2(0,T;V_v)}&\le  \|y_i(\bar{u})-y_{\dd}\|_{L^2(0,T;V_v)} +\| y_{\dd}\|_{L^2(0,T;V_v)}  \le \kappa + 2 \| y_{\dd}\|_{L^2(0,T;V_v)}
\end{aligned}
\end{equation}
for $i=1,2$. 
By Proposition \ref{prop:more_reg}  for $i\in \{1,2\}$
\begin{equation}\label{eq:aux3}
\VERT y_i(\bar{u}) \VERT \le \widehat{C}(\|y_0\|_Y + \|\tilde g\|_{L^2(0,T;V'_v}),
\end{equation}
holds, where  $\tilde g= - h_1 +h_2 + u N y_i,$  and hence by Lemma \ref{lem:loc_lipschitz_stat}
\begin{equation*}
\begin{aligned}
\|h_1(y_i)\|_{L^2(0,T;V_v')} &\le \|U\|_{L^2(\mathbb R^d)}\|y_i\|_{C([0,T];Y)} \|y_i\|_{L^2(0,T;V_v)}\\
&\le \sqrt{d} \|U\|_{L^2(\mathbb R^d)}\|y_i\|_{C([0,T];Y)} \|y_i\|_{L^2(0,T;V_v)}\\[1.5ex]
\|h_2(y_i)\|_{L^2(0,T;V_v')} &\le \sqrt{d}\|U\|_{L^2(\mathbb R^d)}\|y_i\|_{C([0,T];Y)} \|y_i\|_{L^2(0,T;Y)}\\
&\le \sqrt{d}\|U\|_{L^2(\mathbb R^d)}\|y_i\|_{C([0,T];Y)} \|y_i\|_{L^2(0,T;V_v)}. 
\end{aligned}
\end{equation*}
Thus by Proposition \ref{prop:8},  for $i,j \in \{1,2\}$, we have
\begin{align*}
\|h_j(y_i)\|_{L^2(0,T;V_v')} &\le \sqrt{d}\|U\|_{L^2(\mathbb R^d)} C(\|y_0\|,u^\infty,\kappa +2\|y_{\dd}\|) \|y_i\|_{L^2(0,T;V_v)}.
\end{align*}
From \eqref{eq:aux3} and $\|y_i(\bar{u})\|_{L^2(0,T;V_v)}\le \kappa + 2 \| y_{\dd}\|_{L^2(0,T;V_v)}$ it thus follows
\begin{equation}\label{eq:aux4}
\begin{aligned}
 \VERT y_i(\bar{u}) \VERT& \le \widehat{C}\big (\|y_0\|+2\sqrt{d}\|U\|_{L^2(\mathbb R^d)}C(\|y_0\|,u^\infty,\kappa +2\|y_{\dd}\|) \|y_i\|_{L^2(0,T;V_v)} \\
 &\quad + u^\infty \|N\|_{\mathcal{L}(Y,V'_v)} \|y_i\|_{L^2(0,T;V_v)} \big) \\
 &\le \widehat{C}\big (\|y_0\| +(\kappa + 2 \| y_{\dd}\|_{L^2(0,T;V_v)})(2\sqrt{d}\|U\|_{L^2(\mathbb R^d)}C(\|y_0\|,u^\infty,\kappa +2\|y_{\dd}\|)\\ &\quad + u^\infty \|N\|_{\mathcal{L}(Y,V'_v)}) \big)
 \end{aligned}
\end{equation}
where in the last inequality we used \eqref{eq:aux5}.
We now set $e=y_1-y_2$ and observe that $e$ satisfies
\begin{equation*}
\dot{e}(t)=Ae(t) + De(t) + g(t), \quad e(0)=0, 
 \end{equation*}
with 
\begin{equation*}
g(t)=-(h_1(y_1(t))-h_1(y_2(t))) + h_2(y_1(t)) - h_2(y_2(t)) + \bar{u}(t) N (y_1(t)-y_2(t)). 
\end{equation*}
By Lemma \ref{lem:loc_lipschitz} we find 
\begin{equation*}
\|g\|_{L^2(0,T;V'_v)} \le 2\sqrt{d}\|U\|_{L^2(\mathbb R^d)} (\VERT y_1\VERT+ \VERT y_2\VERT) (\VERT e\VERT + \|N\|_{\mathcal{L}(Y,V_v')} u^\infty \|e\|_{L^2(0,T;Y)}).
\end{equation*}
Proposition \ref{prop:more_reg} implies that 
\begin{equation*}
\begin{array}{ll}
\VERT e \VERT    \le \widehat{C}\|g\|_{L^2(0,T:V'_v)}  \le 2\sqrt{d}\|U\|_{L^2(\mathbb R^d)}\widehat{C}( \VERT y_1\VERT + \VERT y_2 \VERT) \VERT e \VERT (1+ \|N\|_{\mathcal{L}(Y,V_v')} u^\infty).
\end{array}
\end{equation*}
Combining this estimate with \eqref{eq:aux4} yields
\begin{align*}
 \VERT e \VERT   & \le 2 \sqrt{d} \|U\|_{L^2(\mathbb R^d)}\widehat{C} \VERT e \VERT (1+ \|N\|_{\mathcal{L}(Y,V_v')} )\cdot \\
 & \quad  2\widehat{C} \bigg(\|y_0\| +(\kappa + 2 \| y_{\dd}\|_{L^2(0,T;V_v)})\big(2\sqrt{d}\|U\|_{L^2(\mathbb R^d)}C(\|y_0\|,u^\infty,\kappa +2\|y_{\dd}\|) + u^\infty \|N\|_{\mathcal{L}(Y,V'_v)}\big) \bigg) 
\end{align*}
where we used  $u^\infty\le 1$. This,   together with the assumptions
\begin{align*}
 8d\widehat{C}^2\|U\|_{L^2(\mathbb R^d)}^2 (1+\|N\| )(2\|y_{\mathrm{d}}\|+\kappa) C(\|y_0\|,u^\infty,\kappa+2\|y_{\mathrm{d}}\|)&\le \frac{1}{2} \\
 4\sqrt{d}\widehat{C}^2\|U\|_{L^2(\mathbb R^d)} (1+\|N\| )\big(\|y_0\|+(2\|y_{\mathrm{d}}\|+\kappa)u^\infty\|N\|\big)&<\frac{1}{2}
\end{align*}
implies the uniqueness claim. 
\end{proof}

\section{Conclusion}

We have analyzed local existence of solutions to a specific nonlinear and nonlocal kinetic Fokker-Planck equation. Due to a lack of coercivity of the underlying operators, standard (variational) solution techniques were not directly applicable and we instead resorted to the concept of admissible control operators and suitable Lipschitz estimates for the nonlinearities which were utilized in a fixed point technique. We subsequently introduced and analyzed a quadratic tracking type cost functionals for which we showed the existence of a local solution and the uniqueness of its associated state.
 
Several questions remain open and deserve a further detailed analysis. For example, while the existence of optimal controls obviously calls for first order necessary optimality conditions, a corresponding sensitivity analysis appears to be far from straightforward as the uniqueness of solutions (other than the ones corresponding to the optimal control) is not clear at this point. Furthermore, a study of the optimal control problem on an infinite-horizon or the construction of feedback controls seem interesting follow up research questions. Similarly, discretization strategies both of the uncontrolled equation as well as the optimal control problem apparently  have  not received particular attention for hypocoercive problems and could be the focus of future work. It would further be of interest whether Assumption \ref{ass:U} concerning  extra regularity of the potential $U$ could be overcome by the velocity averaging effect of the first order hyperbolic operator $\partial_t y  + v  \cdot \nabla_x y $, see, e.g., \cite{DiLi89}. This would be the case if the nonlocal operators $\rho_{\mu y}$ and  $\rho_{\mu y v}$ had local support in the $v$ variable.
  
\section*{Appendix}

\begin{proof}[Proof of Lemma \ref{lem:loc_lipschitz_stat}]
   
Assume that $w,\tilde{w}\in V_v$. We consider $U*\rho_{\mu w} R_0\tilde{w}$ and begin with an estimate for $U*\rho_{\mu w}$. Young's inequality for convolutions yields
  \begin{align*}
   \|U*\rho_{\mu w}\|_{L^\infty(\mathbb R^d)} \le \|U\|_{L^2(\mathbb R^d)} \|\rho_{\mu w}\|_{L^2(\mathbb R^d)}.
  \end{align*}
 We further have that
 \begin{align*}
  \|\rho_{\mu w}\|_{L^2(\mathbb R^d)} &= \left( \int_{\mathbb R^d} \left|\rho_{\mu w}(x)\right|^2 \mathrm{d}x \right)^{\frac{1}{2}}
  = \left( \int_{\mathbb R^d} \left|\int_{\mathbb R^d} \mu(v)w(x,v)\,\mathrm{d}v\right|^2 \mathrm{d}x \right)^{\frac{1}{2}}.
  \end{align*}
With the Minkowski integral inequality, we then obtain
\begin{align*}
  \|\rho_{\mu w}\|_{L^2(\mathbb R^d)} &\le \int_{\mathbb R^d} \left(\int_{\mathbb R^d} \left|\mu(v)w(x,v) \right|^2 \mathrm{d}x \right)^{\frac{1}{2}} \mathrm{d}v  = \int_{\mathbb R^d} \mu(v)^{\frac{1}{2}} \left(\int_{\mathbb R^d} \mu(v)\left|w(x,v) \right|^2 \mathrm{d}x \right)^{\frac{1}{2}} \mathrm{d}v.
\end{align*}
Applying Cauchy-Schwarz w.r.t.~the $v$ variable shows that
\begin{align}\label{eq:aux6}
  \|\rho_{\mu w(t)}\|_{L^2(\mathbb R^d)} &\le \underbrace{\left(\int_{\mathbb R^d} \mu(v)\,\mathrm{d}v\right)^{\frac{1}{2}}}_{=1} \underbrace{\left(\int_{\mathbb R^d}\int_{\mathbb R^d}\mu(v)|w(x,v)|^2\,\mathrm{d}x\,\mathrm{d}v \right)^{\frac{1}{2}}}_{=\|w\|_{Y}}.
\end{align}
Up to this point, we conclude that
\begin{align}\label{eq:aux1}
 \|U*\rho_{\mu w}\|_{L^\infty(\mathbb R^d)} \le \|U\|_{L^2(\mathbb R^d)} \|w\|_Y
\end{align}
which then implies that
\begin{align*}
 \|U*\rho_{\mu w} R_0 \tilde{w} \|_{V_v'} &=  \sup_{\|\psi\|_{V_v}=1}\left| \int_{\mathbb R^d}\int_{\mathbb R^d} \mu(v) (U*\rho_{\mu w})(x) R_0 \tilde{w}(x,v) \psi(x,v)\,  \mathrm{d}x\, \mathrm{d}v \right|  \\
 &\le \|U*\rho_{\mu w} \|_{L^\infty(\mathbb R^d)}  \sup_{\|\psi\|_{V_v}=1} |\langle R_0\tilde{w},\psi \rangle _{Y}|   \\
 &\le \|U\|_{L^2(\mathbb R^d)} \|w \|_{Y}\sup_{\|\psi\|_{V_v}=1} |\langle R_0\tilde{w},\psi \rangle _{Y}|.
\end{align*}
For almost all $\psi \in C_0^\infty(\mathbb R^{2d})$, let us note that with \eqref{eq:mu_aux} we have
\begin{align*}
  \langle R_0\tilde{w},\psi\rangle_Y &= \int_{\mathbb R^{2d}} \mu \psi (v\cdot \nabla_v \tilde{w} - \Delta_v \tilde{w}  ) \,\mathrm{d}x\,\mathrm{d}v \\
  &= \int_{\mathbb R^{2d}} \mu \psi v\cdot \nabla_v \tilde{w}  + \nabla_v  \tilde{w}\cdot \nabla_v (\mu \psi)  \,\mathrm{d}x\,\mathrm{d}v \\
  &= \int_{\mathbb R^{2d}} \mu \psi v\cdot \nabla_v \tilde{w}  + \mu \nabla_v  \tilde{w}\cdot \nabla_v \psi + \psi \nabla_v \tilde{w} \cdot \nabla_v \mu  \,\mathrm{d}x\,\mathrm{d}v \\
  &= \int_{\mathbb R^{2d}} \mu \nabla_v  \tilde{w}\cdot \nabla_v \psi\,\mathrm{d}x\,\mathrm{d}v = \langle \nabla_v \tilde{w},\nabla_v\psi \rangle _{Y^{2d}}.
\end{align*}
By density of $C^\infty_0(\R^{2d})$ in $V_v$, which can be argued by classical approximation techniques, see for instance \cite[Section 3]{Ada75}, the above equality holds for all $\psi \in V_v$.
We finally arrive at
\begin{align*}
 \|U*\rho_{\mu w} R_0 \tilde{w} \|_{V_v'} 
  &\le \|U\|_{L^2(\mathbb R^d)} \|w \|_{Y} \sup_{\|\psi\|_{V_v}=1} |\langle R_0\tilde{w}(t),\psi \rangle _{Y}|\\
 &\le \|U\|_{L^2(\mathbb R^d)}  \|w \|_{Y} \sup_{\|\psi\|_{V_v}=1} \|\nabla_v\tilde{w}(t)\|_{Y^{2d}}  \|\nabla_v\psi\|_{Y^{2d}}    \\
 & \le  \|U\|_{L^2(\mathbb R^d)}  \|w \|_{Y} \|\nabla_v \tilde{w}\|_{Y} .
\end{align*}
The first assertion now follows from
\begin{align*}
 h_1(y)-h_1(z)&= U*\rho_{\mu y} R_0y-U*\rho_{\mu z} R_0z \\
 &=U*\rho_{\mu y} R_0y-U*\rho_{\mu y} R_0z+U*\rho_{\mu y} R_0z-U*\rho_{\mu z} R_0z \\
 &=U*\rho_{\mu y} R_0(y-z)+U*\rho_{\mu (y-z)}  R_0z.
\end{align*}
Let us next turn to $U*\rho_{v\mu w}\cdot ( \nabla_v \tilde{w}-\tilde{w}v)$. 
We observe that with Young's inequality for convolutions and \eqref{eq:derive_y_aux5a},  it holds that
\begin{align}\label{eq:derive_y_aux5}
 \| U*\rho_{v \mu w}\|_{(L^\infty(\mathbb R^d))^d} \le \| U \|_{L^2(\mathbb R^d)} \| \rho_{v \mu w} \|_{(L^2(\mathbb R^d))^d}\le \sqrt{d}\| U \|_{L^2(\mathbb R^d)} \| w\|_{Y}.
\end{align}
Before we continue, recall that with \eqref{eq:mu_aux} we have the identity
\begin{align*}
 \mu^{-1}\nabla_v(\mu \tilde{w})&= \mu^{-1}( \tilde{w}\nabla_v \mu +\mu \nabla_v \tilde{w})  = \mu^{-1}( -\tilde{w}v\mu +\mu \nabla_v \tilde{w})= \nabla _v \tilde{w}-\tilde{w}v.
\end{align*}
Hence, we obtain
\begin{align*}
& \| U*\rho_{v \mu w} \cdot (\nabla_v \tilde{w}-\tilde{w}v) \|_{V_v'}\\
&\ \ = \sup_{\|\psi\|_{V_v}=1}\left| \int_{\mathbb R^d}\int_{\mathbb R^d} \mu(v) (U*\rho_{v\mu w})(x) \cdot (\nabla_v \tilde{w}(x,v)-\tilde{w}(x,v)v) \psi(x,v)\,  \mathrm{d}x\, \mathrm{d}v \right| \\
&\ \ = \sup_{\|\psi\|_{V_v}=1}\left| \int_{\mathbb R^d}\int_{\mathbb R^d}   (U*\rho_{v\mu w})(x) \cdot \nabla_v(\mu(v) \tilde{w}(x,v)) \psi(x,v)\,  \mathrm{d}x\, \mathrm{d}v \right| .
\end{align*}
Using \eqref{eq:derive_y_aux5} we find that
\begin{align*}
& \| U*\rho_{v \mu w} \cdot (\nabla_v \tilde{w}-\tilde{w}v) \|_{V_v'}\\
& \ \ \le  \| U*\rho_{v \mu w} \|_{L^\infty(\mathbb R^d)} \sup_{\|\psi\|_{V_v}=1}\left\| \int_{\mathbb R^d}\int_{\mathbb R^d}    \nabla_v(\mu(v) \tilde{w}(x,v)) \psi(x,v)\,  \mathrm{d}x\, \mathrm{d}v \right\|_{\mathbb R^d}  \\
& \ \ \le \sqrt{d} \| U \|_{L^2(\mathbb R^d)} \| w \|_{Y} \sup_{\|\psi\|_{V_v}=1}\left\| \int_{\mathbb R^d}\int_{\mathbb R^d} \mu(v) \tilde{w}(x,v) \nabla_v\psi(x,v)\,  \mathrm{d}x\, \mathrm{d}v \right\|_{\mathbb R^d} \\
& \ \ \le \sqrt{d} \| U \|_{L^2(\mathbb R^d)} \| w \|_{Y} \sup_{\|\psi\|_{V_v}=1} \int_{\mathbb R^d}\int_{\mathbb R^d} |\mu(v)^{\frac{1}{2}} \tilde{w}(x,v)| \cdot \| \mu^{\frac{1}{2}}(v)\nabla_v\psi(x,v)\|_{\mathbb R^d} \,  \mathrm{d}x\, \mathrm{d}v .
\end{align*}
Applying Cauchy Schwarz on $L^2(\mathbb R^{2d})$ allows to conclude that
\begin{align*}
 & \| U*\rho_{v \mu w} \cdot (\nabla_v \tilde{w}-\tilde{w}v) \|_{V_v'}\\
& \ \ \le \sqrt{d} \| U \|_{L^2(\mathbb R^d)} \| w \|_{Y} \sup_{\|\psi\|_{V_v}=1} \underbrace{\| \mu^{\frac{1}{2}} \tilde{w}\|_{L^2(\mathbb R^{2d})}}_{=\|\tilde{w}\|_Y}
\underbrace{
\| \mu^{\frac{1}{2}} \nabla_v\psi \|_{(L^2(\mathbb R^{2d}))^d}}_{=\|\nabla_v \psi\|_{Y^d}}  \\
& \ \ \le \sqrt{d} \| U \|_{L^2(\mathbb R^d)} \| w \|_{Y}  \|\tilde{w}\|_Y .
\end{align*}
Finally, we obtain the second assertion since
\begin{align*}
 h_2(y)-h_2(z) &= U*\rho_{\mu yv} \cdot (\nabla_v y-yv) - U*\rho_{\mu zv} \cdot (\nabla_v z-zv) \\
 & = U*\rho_{\mu yv} \cdot (\nabla_v y-yv) - U*\rho_{\mu yv} \cdot (\nabla_v z-zv) \\
 &\quad +U*\rho_{\mu yv} \cdot (\nabla_v z-zv) - U*\rho_{\mu zv} \cdot (\nabla_v z-zv) \\
 &=U*\rho_{\mu yv} \cdot (\nabla_v (y-z)-(y-z)v) +U*\rho_{\mu (y-z)v} \cdot (\nabla_v z-zv).
\end{align*}
\end{proof}

\begin{proof}[Proof of Proposition \ref{prop:weak_conv_hi}]
The assumed boundedness of $\{y_n\}$ implies the existence of a weakly convergent subsequence $y_{n_k} \rightharpoonup y$ in  $\mathcal{X}$. Subsequently, the second subscript $k$ will be dropped. 

\emph{Step 1.}
We first consider $h_1$. Let $t>0$ and $\psi \in V_v$ be as announced. It then holds that
   \begin{align*}
    &\int_0^t \langle h_1(y_n(s))-h_1(y(s)),\psi \rangle_{V_v',V_v} \,\mathrm{d}s \\
    &\quad = \int _ 0^t \langle U*\rho_{\mu (y_n(s)-y(s))}\overline{R}y_n(s),\psi \rangle_{V_v',V_v}\,\mathrm{d}s   +
    \int _ 0^t \langle U*\rho_{\mu y(s)}\overline{R}(y_n(s)-y(s)),\psi \rangle_{V_v',V_v}\,\mathrm{d}s \\
    &\quad = \underbrace{\int _ 0^t \langle (\mathfrak{u}_n(s)-\mathfrak{u}(s))\overline{R}y_n(s),\psi \rangle_{V_v',V_v}\,\mathrm{d}s}_{=\mathrm{I}} +  \underbrace{
    \int _ 0^t \langle \overline{R}(y_n(s)-y(s)),\mathfrak{u}(s)\psi \rangle_{V_v',V_v}\,\mathrm{d}s}_{=\mathrm{II}}.
   \end{align*}

   We already know from \eqref{eq:aux1} that 
   \begin{align*}
    \| \mathfrak{u}_n(s,\cdot)\|_{L^\infty(\mathbb R^d)} \le \|U\|_{L^2(\mathbb R^d)} \| y_n(s)\|_Y
   \end{align*}
   for all $s\in [0,T]$. Since $\{y_n\}$ is bounded in $L^\infty(0,T;Y)$, we can find a constant $C$ such that
   $
    |\mathfrak{u}_n(s,x)| \le C
   $
   for all $(s,x)\in [0,T]\times \mathbb R^d$. Analogously we obtain
   $ |\mathfrak{u}(s,x)| \le C
   $
   for all $(s,x)\in [0,T]\times \mathbb R^d$.

   Now we turn to $\mathrm{II}$. First, we observe that $\mathfrak{u}\psi \in L^2(0,T;V_v)$. Since $y_n\rightharpoonup y$ in $L^2(0,T;V_v)$, it follows that $\overline{R}y_n \rightharpoonup \overline{R}y$ in $L^2(0,T;V_v')$ and consequently $\mathrm{II}$ tends to $0$ as $n\to\infty$.

   Next, we turn to $\mathrm{I}$ and observe that for $\psi \in L^2(0,T;V_v)$ with $\psi(s,x,v)=0$ for $x\notin \Omega$, it holds that
  \begin{align*}
 &  \int _ 0^t \langle (\mathfrak{u}_n(s)-\mathfrak{u}(s))\overline{R}y_n(s),\psi \rangle_{V_v',V_v}\,\mathrm{d}s\\ 
 &\quad =
   \int _ 0^t \int_{\mathbb R^{d}} (\mathfrak{u}_n(s,x)-\mathfrak{u}(s,x))\int_{\mathbb R^{d}}\psi(x,v) \overline{R}y_n(s,x,v) \mu(v)\,\mathrm{d}x\,\mathrm{d}v\,\mathrm{d}s \\
 &\quad =
   \int _ 0^t \int_{\Omega} (\mathfrak{u}_n(s,x)-\mathfrak{u}(s,x))\int_{\mathbb R^{d}}\psi(x,v) \overline{R}y_n(s,x,v) \mu(v)\,\mathrm{d}v\,\mathrm{d}x\,\mathrm{d}s\\
 &\quad \le
   \int _ 0^t  \|\mathfrak{u}_n(s)-\mathfrak{u}(s)\|_{C(\bar{\Omega})}\int_{\mathbb R^d}\int_{\mathbb R^{d}}|\psi(x,v) \overline{R}y_n(s,x,v) \mu(v)|\,\mathrm{d}v\,\mathrm{d}x\,\mathrm{d}s\\
 &\quad \le
   \int _ 0^t  \|\mathfrak{u}_n(s)-\mathfrak{u}(s)\|_{C(\bar{\Omega})} |\langle  \overline{R}y_n(s),\psi \rangle |_{V_v',V_v} \,\mathrm{d}s \\
  &\quad \le  \|\mathfrak{u}_n-\mathfrak{u}\|_{L^2(0,T;C(\bar{\Omega}))} \|\psi \| _{V_v} \| \overline{R}y_n\|_{L^2(0,T;V_v')}.
  \end{align*}
  Since from Lemma \ref{lem:uv_strong_conv}  we have $\mathfrak{u}_n\rightarrow \mathfrak{u}$ in $L^2(0,T;C(\bar{\Omega}))$
   and  $\overline{R}y_n $ is bounded in $L^2(0,T;V_v')$, $\mathrm{I}$ tends to $0$ as $n\to\infty$.

\emph{Step 2.} Next we consider $h_2$.
From \eqref{eq:derive_y_aux5} we conclude that
\begin{align*}
\|\mathfrak{v}_n(s,\cdot)\|_{(L^\infty(\mathbb R^d))^d} \le C \| y_n(s)\|_Y
\end{align*}
which, together with the boundedness of $\{y_n\}$ in $L^\infty(0,T;Y)$ ensures the existence of a constant $C$ such that $
\| \mathfrak{v}_{n,i}(s,x)\|\le C  $ for almost all $(s,x)\in [0,T]\times \mathbb R^d$ and $i=1,\dots,d$. An analogous estimate holds true for $\mathfrak{v}_{i}, i=1,\dots,d$.

For what follows, we introduce the notation $
Sy=\nabla_v y-yv$. With derivations similar to the ones in the proof of Lemma \ref{lem:loc_lipschitz}, one can show that $(Sz)_i\in \mathcal{L}(Y,V_v')$. Indeed, observe that
\begin{align*}
  \|e_i \cdot Sz \|_{V_v'}&=\sup_{\|\psi\|_{V_v}=1}| \langle e_i\cdot Sz, \psi \rangle_{V_v',V_v}|\\
  &=\sup_{\|\psi\|_{V_v}=1}\left|\int_{\mathbb R^d}\int_{\mathbb R^d} e_i\cdot \nabla_v (\mu(v)z(x,v))\psi(x,v)\,\mathrm{d}x\,\mathrm{d}v \right| \\
  &=\sup_{\|\psi\|_{V_v}=1}\left|\int_{\mathbb R^d}\int_{\mathbb R^d} \mu(v)z(x,v)\psi_{v_i}(x,v)\,\mathrm{d}x\,\mathrm{d}v \right| \\
  &\le \sup_{\|\psi\|_{V_v}=1}\int_{\mathbb R^d}\int_{\mathbb R^d} |\mu(v)^{\frac{1}{2}}z(x,v)|\cdot |\psi_{v_i}(x,v)\mu(v)^{\frac{1}{2}} |\,\mathrm{d}x\,\mathrm{d}v \\
  &\le \sup_{\|\psi\|_{V_v}=1}\|z\|_Y \cdot \|\nabla_v\psi\|_{  Y^d} \le \| z \|_Y.
\end{align*} 
With these preparations, we obtain for $\psi \in V_v$
\begin{align*}
    &\int_0^t \langle h_2(y_n(s))-h_2(y(s)),\psi \rangle_{V_v',V_v} \,\mathrm{d}s \\
    &\quad = \int _ 0^t \langle U*\rho_{\mu v (y_n(s)-y(s))}\cdot Sy(s),\psi \rangle_{V_v',V_v}\,\mathrm{d}s   +
    \int _ 0^t \langle U*\rho_{\mu v y(s)}\cdot S(y_n(s)-y(s)),\psi \rangle_{V_v',V_v}\,\mathrm{d}s\\
    &\quad = \underbrace{\int _ 0^t \langle (\mathfrak{v}_n(s)-\mathfrak{v}(s))\cdot Sy_n(s),\psi \rangle_{V_v',V_v}\,\mathrm{d}s}_{=\mathrm{I}} +  \underbrace{
    \int _ 0^t \langle \mathfrak{v}(s)\cdot S(y_n(s)-y(s)),\psi \rangle_{V_v',V_v}\,\mathrm{d}s}_{=\mathrm{II}}.
   \end{align*}
  
   Let us focus on $\mathrm{II}$ first. Note that we have 
   \begin{align*}
     \int_0^t \langle \mathfrak{v}(s)\cdot S(y_n(s)-y(s)),\psi \rangle_{V_v',V_v}\,\mathrm{d}s&=\sum_{i=1}^d \int_0^t 
     \langle \mathfrak{v}_i(s) e_i \cdot S(y_n(s)-y(s)),\psi \rangle_{V_v',V_v} \,\mathrm{d}s \\
     & =\sum_{i=1}^d \int_0^t 
     \langle e_i \cdot S(y_n(s)-y(s)),\mathfrak{v}_i(s)\psi \rangle_{V_v',V_v} \,\mathrm{d}s.
   \end{align*}
   Since $\mathfrak{v}_i\psi \in L^2(0,T;V_v), y_n\rightharpoonup y \in L^2(0,T;V_v)$ and $e_i\cdot S \in \mathcal{L}(Y,V_v')$, we conclude that $\mathrm{II}$ tends to $0$ as $n\to \infty$. 
   
   Let us turn to $\mathrm{I}$. It holds that
   \begin{align*}
    &\int _ 0^t \langle (\mathfrak{v}_n(s)-\mathfrak{v}(s))\cdot Sy_n(s),\psi \rangle_{V_v',V_v}\,\mathrm{d}s \\
    &\quad= \sum_{i=1}^d \int _ 0^t \langle (\mathfrak{v}_{n,i}(s)-\mathfrak{v}_i(s)) e_i\cdot Sy_n(s),\psi \rangle_{V_v',V_v}\,\mathrm{d}s.
   \end{align*}
   For $i=1,\dots,d$, using $\psi(x,v)=0$ for $x\notin \Omega$, we find that 
  \begin{align*}
 &  \int _ 0^t \langle (\mathfrak{v}_{n,i}(s)-\mathfrak{v}_i(s)) e_i\cdot Sy_n(s),\psi \rangle_{V_v',V_v}\,\mathrm{d}s\\ 
 &\quad =
   \int _ 0^t \int_{\mathbb R^{d}} (\mathfrak{v}_{n,i}(s,x)-\mathfrak{v}_i(s,x))\int_{\mathbb R^{d}}\psi(x,v) e_i\cdot Sy_n(s,x,v) \mu(v)\,\mathrm{d}x\,\mathrm{d}v\,\mathrm{d}s \\
 &\quad =
   \int _ 0^t \int_{\Omega} (\mathfrak{v}_{n,i}(s,x)-\mathfrak{v}_i(s,x))\int_{\mathbb R^{d}}\psi(x,v) e_i\cdot Sy_n(s,x,v) \mu(v)\,\mathrm{d}v\,\mathrm{d}x\,\mathrm{d}s\\
 &\quad \le
   \int _ 0^t  \|\mathfrak{v}_{n,i}(s)-\mathfrak{v}_i(s)\|_{C(\bar{\Omega})}\int_{\mathbb R^d}\int_{\mathbb R^{d}}|\psi(x,v) e_i\cdot Sy_n(s,x,v) \mu(v)|\,\mathrm{d}v\,\mathrm{d}x\,\mathrm{d}s\\
 &\quad \le
   \int _ 0^t  \|\mathfrak{v}_{n,i}(s)-\mathfrak{v}_i(s)\|_{C(\bar{\Omega})} |\langle  e_i\cdot Sy_n(s),\psi \rangle |_{V_v',V_v} \,\mathrm{d}s \\
  &\quad \le  \|\mathfrak{v}_{n,i}-\mathfrak{v}_i\|_{L^2(0,T;C(\bar{\Omega}))} \|\psi \| _{V_v} \| e_i\cdot Sy_n\|_{L^2(0,T;V_v')}.
  \end{align*}
   From Lemma \ref{lem:uv_strong_conv}, we know that  
   $\mathfrak{v}_{n,i}\rightarrow \mathfrak{v}$ in $L^2(0,T;C(\bar{\Omega}))$.  This completes the proof. 
\end{proof}

 \begin{proof}[Proof of Lemma \ref{lem:uv_strong_conv}]
At first we show that $\{\mathfrak{v}_{n,i}\}$ is bounded in $W^{1,2}(0,T;L^2(\mathbb R^d))$. Indeed we estimate
  \begin{align*}
   &\|\mathfrak{v}_{n,i}\|_{L^2(0,T;L^2(\mathbb R^d))}^2 = \int_0^T \int_{\mathbb R^d}
   \left| \int_{\mathbb R^d} U(x-\tilde{x})\int_{\mathbb R^d}y_n(t,\tilde{x},v)\mu(v)v_i\,\mathrm{d}v \, \mathrm{d}\tilde{x}\right|^2  \mathrm{d}x \,\mathrm{d}t \\
  &\quad =\int_0^T \left\| \left( U * \int_{\mathbb R^d}y_n(t,v)\mu(v)v_i\,\mathrm{d}v\right)(\cdot) \right\|_{L^2(\mathbb R^d)}^2 \,\mathrm{d}t \\
  &\quad\le \| U\|_{L^1(\mathbb R^d)}^2 \int_0^T \int_{\mathbb R^d} \left| \int_{\mathbb R^d} y_n(t,x,v)\mu^{\frac{1}{2}}(v)\mu^{\frac{1}{2}}(v)v_i\,\mathrm{d}v\right|^2\mathrm{d}x\,\mathrm{d}t \\
  &\quad\le \| U\|_{L^1(\mathbb R^d)}^2 \int_0^T \int_{\mathbb R^d} \left( \int_{\mathbb R^d} y_n^2(t,x,v)\mu(v)\,\mathrm{d}v \right)
  \left( \int_{\mathbb R^d} \mu(v)v_i^2\,\mathrm{d}v\right) \mathrm{d}x\,\mathrm{d}t\\
  &\quad\le \| U\|_{L^1(\mathbb R^d)}^2\|\mu v_i^2\|_{L^1(\mathbb R^d)} \int_0^T \int_{\mathbb R^d}  \int_{\mathbb R^d} y_n^2(t,x,v)\mu(v)\,\mathrm{d}v \, \mathrm{d}x\,\mathrm{d}t \\
  &\quad= \| U\|_{L^1(\mathbb R^d)}^2\|\mu v_i^2\|_{L^1(\mathbb R^d)} \|y_n\|_{L^2(0,T;Y)}^2.
  \end{align*}
  If $U\in W^{s,1}(\mathbb R^d)$, the previous computations can be repeated, using differentiation of convolutions, to obtain
  \begin{align*}
   \|\mathfrak{v}_{n,i}\|_{L^2(0,T;H^s(\mathbb R^d))}^2 &\le \| U\|^2_{W^{s,1}(\mathbb R^d)}\|\mu v_i^2\|_{L^1(\mathbb R^d)} \|y_n\|_{L^2(0,T;Y)}^2.
  \end{align*}
  For the temporal derivative of $\mathfrak{v}_{n,i}$, we continue with
  \begin{align*}
   &\|\dot{\mathfrak{v}}_{n,i}\|_{L^2(0,T;L^2(\Omega))}^2 =    \int_0^T \int_{\Omega} \left| \int_{\mathbb R^{2d}} U(x-\tilde{x}) \dot{y}_n(t,\tilde{x},v) \mu(v)v_i\,\mathrm{d} v\,\mathrm{d} \tilde{x} \right|^2\,\mathrm{d}x\,\mathrm{d}t \\
   &\quad= \int_0^T \int_{\Omega} \left| \langle \dot{y}_n(t), (\cdot)_i\cdot U(x-\cdot) \rangle_{\mathcal{D}} \right|^2\,\mathrm{d}x\,\mathrm{d}t   \\
   &\quad\le \int_0^T \int_{\Omega} \left| \sup\limits_{\phi \in \mathcal{D}(A^*),\|\phi\|_{\mathcal{D}(A^*)}\le 1} \langle \dot{y}_n(t),\phi \rangle_{\mathcal{D}}\right|^2 \|(\cdot)_i\cdot U(x-\cdot) \|^2_{\mathcal{D}(A^*)} \,\mathrm{d}x\,\mathrm{d}t \\
   &\quad\le \int_0^T \int_{\Omega} \| \dot{y}_n(t) \|_{Y_{-1}}^2\|(\cdot)_i\cdot U(x-\cdot) \|^2_{\mathcal{D}(A^*)}\,\mathrm{d}x\,\mathrm{d}t \\
   &\quad=|\Omega| \| \dot{y}_n\|_{L^2(0,T;Y_{-1})}^2 \, \sup_{x\in\Omega} \|(\cdot)_i\cdot U(x-\cdot) \|^2_{\mathcal{D}(A^*)}.
  \end{align*} 
   Let us set $f^x(\tilde{x},v):=v_i U(x-\tilde{x})$ for $(\tilde x,v)\in\R^{2d}$, and observe that from \eqref{eq:adj_A}
      \begin{align*}
   g^x:=A^*f^x = \Delta_v f^x - v\cdot \nabla_v f^x + v\cdot \nabla_{\tilde{x}} f^x = -v_i U(x-\tilde{x}) - v_i v\cdot \nabla_{\tilde{x}}U (x-\tilde{x}).
  \end{align*}
If $U \in H^1(\mathbb R^d)$, then $\{f^x\}_{x\in \Omega}$ and $\{g^x\}_{x\in \Omega}$ are uniformly bounded in $Y$. Together with the uniform boundedness of $\{\dot y_n\}$  in $L^2(0,T;Y_{-1})$ in $n$, we obtain that $\|\dot{\mathfrak{v}}_{n}\|_{L^2(0,T;(L^2(\Omega))^d)}$ is uniformly bounded in $n$.
  
  Next, again differentiation of convolutions allows to repeat the computations to obtain uniform boundedness of $\dot{\mathfrak{v}}_{n}$ in $L^2(0,T;(H^s(\Omega))^d)$
   provided that $U\in H^{s+1}(\mathbb R^d)$.
  
 Let us now argue that $\mathfrak{v}_{n,i}\to \mathfrak{v}_i $ in $L^2(0,T;C(\bar{\Omega}))$. By \eqref{eq:derive_y_aux5a} and Young's inequality for convolutions it can be argued that $y\to \mathfrak{v}_i=U*\rho_{\mu yv_i}$ is an element of  $\mathcal{L}(L^2(0,T;Y),L^2(0,T;(L^2(\mathbb R^d))))$. Since $y_n\rightharpoonup y$ in $L^2(0,T;Y)$ this implies that $\mathfrak{v}_{n,i} \rightharpoonup \mathfrak{v}_i$.

  Now let us choose $s>\tfrac{d}{2}$ and utilize that $U\in W^{s+1,1}(\mathbb R^d)\cap H^s(\mathbb R^d)$. Then due to the compact embedding of $L^2(0,T;H^{s+1}(\Omega))\cap W^{1,2}(0,T;H^{s-1}(\Omega)) $ in $L^2(0,T;H^s(\Omega))$, we have that $\mathfrak{v}_{n,i}\to \mathfrak{v}_i$ in $L^2(0,T;H^s(\Omega))$, by the Aubin-Lions lemma,   and consequently in $L^2(0,T;C(\bar{\Omega}))$ since $s>\tfrac{d}{2}$. Since this holds for each $i\in\{1,\dots,d\}$, we have $\mathfrak{v}_{n}\to \mathfrak{v}$ in $L^2(0,T;(H^s(\Omega))^d).$
  
  The convergence of $\mathfrak{u}_n \to \mathfrak{u}$ can be shown with almost identical arguments, replacing $f$ by $\tilde{f}(\tilde{x},v)=1_v U(x-\tilde{x})$ where $1_v(v)=1$ for all $v\in \mathbb R^d$ and using \eqref{eq:aux1} instead of \eqref{eq:derive_y_aux5}. 
 \end{proof}
\bibliographystyle{siam}
\bibliography{references}

\end{document}